\DeclareMathOperator{\Ker}{Ker} 
\DeclareMathOperator{\cl}{cl}
\DeclareMathOperator{\Int}{int}
\DeclareMathOperator{\Dom}{dom}
\DeclareMathOperator{\Sp}{span}
\DeclareMathOperator{\icr}{icr}
\DeclareMathOperator{\co}{co}
\newtheorem{theorem}{Theorem}
\numberwithin{theorem}{section}
\theoremstyle{plain}
\theoremstyle{definition}\newtheorem{example}{Example}
\newtheorem{corollary}{Corollary}[theorem]
\theoremstyle{definition}\newtheorem{definition}{Definition}
\theoremstyle{definition} \newtheorem{cntex}{Counterexample}
\theoremstyle{definition}
\newtheorem{lemma}{Lemma}
\numberwithin{lemma}{section}
\newtheorem{proposition}{Proposition}
\numberwithin{proposition}{section}
\theoremstyle{definition}
\numberwithin{equation}{section}
\begin{document}

\title{Algebraic Lipschitz and Subdifferential Calculus in Vector Spaces}
\author{Dmytro Voloshyn \thanks{Institute for Applied System Analysis, National Technical University of Ukraine “Igor Sikorsky Kyiv Polytechnic Institute”, Peremogy ave., 37, build, 35, 03056, Kyiv, Ukraine, dmytro.voloshyn@yahoo.com
}}
\maketitle
\begin{abstract}
The main contribution of this paper is that every convex function with non-empty relative algebraic interior of its domain is Lipschitz and subdifferentiable in some algebraic sense without any additional topological constraints. The proposed approach uses slightly modified Clarke's subdifferential for functions defined on a convex symmetric set and Lipschitz with respect to a Minkowski functional. Following this, Clarke's subdifferential calculus is generalized to vector spaces and, where continuity properties are claimed, to topological vector spaces. 
\end{abstract}

\section{Introduction}
Let $X$ be a real vector space, $S\subset X$ be a convex subset of $X$, $\varphi : S \rightarrow \mathbb{R}$ be a convex function. We denote by $A-B$ the algebraic subtraction of sets $\{A,B\} \subset X$, i.e. $A-B:=\{a - b : a \in A, \ b \in B\}.$
The linear hull of a set $A\subset X$ is denoted as $\Sp A$. Recall that \textit{the relative algebraic interior} of the convex set $S$ is the set defined by
$$ \icr S := \{x \in S : \text{for every } y \in \Sp (S-S)\text{ there exists }t>0 \text{ such that } x+ty  \in S\},$$ 
\textit{the Minkowski functional} (or \textit{the Minkowski gauge}) of the set $S$ is the function $\mu_S:X\rightarrow\mathbb{R}\cup\{+\infty\}$ defined by
$$ \mu_{S} (x) := \inf \{ t > 0 : x \in tS\}, \ x \in X \text{ (we put }\inf \emptyset := +\infty\text{).}$$ 
Note that for any $b \in S$ the equality $\Sp(S-S) = \Sp (S-b)$ holds. We say that the convex set $S$ is \textit{symmetric with respect to a point} $x \in S$ if $(S-x)=-(S-x)$.
The notation $\varphi|_{A}$ is used for the restriction of the function $\varphi$ to a subset $A\subset S$.
The algebraic dual of the space $X$ is denoted as $X^\prime$. If the space $X$ is supplied with a vector topology, then its topological dual space is denoted as $X^*$. The standard pairing between $X^*$ and $X$ is denoted as $\left\langle,\right\rangle$, i.e. $\left\langle \zeta, x \right\rangle = \zeta (x), \ \zeta \in X^*, \ x \in X$. The notation $\sigma(X^*, X)$ is used for the smallest vector topology in $X^*$ such that for every $x \in X$ a linear functional of the form $\left\langle \cdot, x\right\rangle$ is continuous. The closure and interior operators are denoted as $\cl$ and $\Int$. With respect to the Minkowski functional $\mu_S$, the balls are denoted as \text{$B_{\mu_S}(x_0,\varepsilon):=\{x \in \Sp (S-S) : \mu_S(x-x_0) < \varepsilon\}$.} In case the space $X$ is supplied with a normed structure, the notation $B(x_0,\varepsilon)$ is used for the open balls. For a vector topology in $X$ and for a function $f: U \rightarrow \mathbb{R}$, where $U\subset X$ is an open set, \textit{the generalized directional derivative} is defined by
$$f ^\circ(x,v) := \limsup_{\substack{y\rightarrow x, \\ t\rightarrow 0+}}\frac{f(y+tv)-f(y)}{t},\ \ x \in U, \ v \in X,$$
and \textit{the Clarke subdifferential }is defined by
$$ \partial_C f(x) := \{\zeta \in X^* : f^\circ(x,v) \geq \left\langle \zeta, v \right\rangle \text{ for all } v \in X\}, \ \ x \in U,$$
\textit{The Fenchel subdifferential} of the convex function $\varphi$ is defined by
$$ \partial \varphi(x) := \{\zeta \in X^* : \varphi(y) - \varphi(x) \geq \left\langle \zeta, y-x \right\rangle \text{ for all } y \in S\},\ \ x \in S.$$
For the properties of both the Fenchel subdifferential and the Clarke subdifferential one may refer to Clarke \cite{clarke_book} or Z\u alinescu \cite{zalinescu}.

The composition of functions $f_1 : A \rightarrow B$ and $f_2 : B \rightarrow C$, where $A$, $B$ and $C$ are arbitrary sets, is denoted as $f_1\circ f_2$, i.e. $f_1\circ f_2 (x) = f_1(f_2(x))$, $x \in A$. For a function \text{$f : X \rightarrow \mathbb{R}\cup\{+\infty, -\infty\}$} we denote \text{$\Dom f := \{ x \in X : f (x) < +\infty\}$}. Let $T_1$ and $T_2$ be topological spaces, \text{$M:T_1 \rightrightarrows T_2$} be a multivalued map. Recall that the map $M$ is called \textit{upper semi-continuous} (u.s.c.) if for every non-empty closed subset $Q$ of the space $T_2$ the set
$$\{y\in T_1 : M(y) \cap Q \neq \emptyset\}$$
is closed in $T_1$. A function $g :T_1 \rightarrow \mathbb{R}$ is called \textit{upper semi-continuous} (u.s.c.) if for every point $x \in T_1$ and for every net $\{x_\alpha\}_{\alpha \in A}$ in $T_1$ converging to $x$ the next inequality holds:
$$ \limsup_{x_\alpha \rightarrow x}g(x_\alpha) \leq g(x).$$

In order to ensure that the subdifferential of a convex function is non-empty, one should use topological assumptions on the function. These assumptions may be found, e.g., in Amara and Ciligot-Travain \cite{amara}, Br\o ndsted and Rockafellar  \cite{bronsted}, Laghdir \cite{laghdir}, Moussaoui and Volle \cite{mouss}, Simons \cite{simons} and in Z\u alinescu \cite{zalinescu}. In this paper it is proposed a method of finding a non-empty subdifferential of a convex function with only assumption that its domain has non-empty relative algebraic interior. The idea is quite simple. Let $\varphi : S \rightarrow \mathbb{R}$ be a convex function defined on a convex set $S$ such that $\icr S \neq \emptyset$. In Section \ref{section_convex_case} we construct a certain convex set $C_A \subset S$, the ``capacity'' of which depends on a real number $A$; the set $C_A$ is symmetric with respect to a point $x_0 \in C_A$ and $\varphi (x) \leq A$ for all $x \in C_A$. Then we show that in some sense the function $\varphi$ is locally Lipschitz on $\icr S$. In \text{Section \ref{section_clarke}} we consider a function that is defined on a convex symmetric set and Lipschitz with respect to the Minkowski functional of its domain. We translate the Clarke subdifferential calculus for such a type of functions from the case of Banach spaces. In \text{Section \ref{section_convex_subd}} we merge the results from \text{Section \ref{section_convex_case}} and \text{Section \ref{section_clarke}}, and thus we obtain a subdifferential calculus for any convex function in a general vector space with non-empty relative algebraic interior of its domain. In \text{Section \ref{section_convex_subd}} we also discuss a way of defining the Fenchel subdifferential and show that in our case it is less convenient to deal with this subdifferential. In \text{Section \ref{examples}} we provide simple illustrations and counterexamples to some theorems of the article; particularly, the main idea is illustrated in Example \ref{example_conv}. The resulted subdifferential of the function $\varphi$ is written in the following form:
$$\partial_C^\mu \varphi (x) := \{\zeta \in \Sp (C_A - C_A)^\prime : \varphi^\prime (x,v) \geq \left\langle \zeta, v \right\rangle \text{ for all } v \in \Sp(C_A-C_A) \}, \  \ x \in \icr S,$$
where $\mu$ indicates the Minkowski functional of a certain translation of the set $C_A$ to the origin.

\section{Algebraic properties of convex functions}\label{section_convex_case}
Let $S$ be a non-empty convex subset of a real vector space such that $\icr S \neq \emptyset$ and $\varphi :S \rightarrow \mathbb{R}$ be a convex function. In Subsection \ref{ca} we construct a convex set $C_A\subset S$ that is symmetric with respect to a point $x_0 \in C_A$ and such that $\varphi|_{C_A}$ is bounded above. In Subsection \ref{lipschitz} we study a Lipschitz property of the function $\varphi$ and then we establish a locally Lipschitz property on $\icr S$. The precise definitions of the Lipschitz properties are given in Subsection \ref{lipschitz}.
\subsection{The construction of $C_A$}\label{ca}
For an arbitrary point $x_0 \in \icr S$, we choose a real number $A \geq \varphi(x_0)$ and we denote
\begin{equation}\label{eq:s_a}
S_A := \{x \in S: \varphi(x) \leq A\}.
\end{equation}
\begin{lemma}\label{lemma_icr}
Let $S_A$ be the set defined in \eqref{eq:s_a}. Then $x_0 \in \icr S_A$.
\end{lemma}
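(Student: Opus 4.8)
The plan is to check the definition of $\icr$ directly. Since $\varphi(x_0)\le A$ we already have $x_0\in S_A$, so it remains to show that for every $y\in\Sp(S_A-S_A)$ there exists $t>0$ with $x_0+ty\in S_A$. Because $S_A\subseteq S$, we have $S_A-S_A\subseteq S-S$ and hence $\Sp(S_A-S_A)\subseteq\Sp(S-S)$; in particular every candidate direction $y$ lies in $\Sp(S-S)$, so the hypothesis $x_0\in\icr S$ supplies a $\tau>0$ with $x_0+\tau y\in S$.

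Next I would restrict $\varphi$ to the segment $[x_0,x_0+\tau y]$. For $t\in[0,\tau]$ the point $x_0+ty=(1-t/\tau)x_0+(t/\tau)(x_0+\tau y)$ lies in $S$ by convexity of $S$, and convexity of $\varphi$ gives $\varphi(x_0+ty)\le(1-t/\tau)\varphi(x_0)+(t/\tau)\varphi(x_0+\tau y)$. The right-hand side is affine in $t$ and equals $\varphi(x_0)\le A$ at $t=0$; hence for all sufficiently small $t>0$ it stays $\le A$, so that $x_0+ty\in S$ together with $\varphi(x_0+ty)\le A$, i.e.\ $x_0+ty\in S_A$. This is exactly the condition defining $x_0\in\icr S_A$. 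Note that the whole argument is purely algebraic: no topology on $X$ is invoked, which is the point of the lemma.

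The step I expect to carry the real content is controlling the affine majorant near $t=0$ in the borderline case $\varphi(x_0)=A$: then the majorant equals $A$ at the endpoint and could increase, so the conclusion ``$\le A$ for small $t>0$'' is not automatic and must be obtained by a separate argument (while for $\varphi(x_0)<A$ it is immediate, since the affine majorant is continuous and strictly below $A$ at $t=0$; likewise, taking $A>\varphi(x_0)$ — harmless for every later use — bypasses the difficulty altogether).
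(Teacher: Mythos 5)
Your segment argument is correct and complete whenever $A>\varphi(x_0)$, and the difficulty you isolate in the borderline case $A=\varphi(x_0)$ is not a gap you failed to close: the lemma is simply false there, so the ``separate argument'' you were hoping for does not exist. Take $X=\mathbb{R}$, $S=\mathbb{R}$, $\varphi(x)=x$, $x_0=0$ and $A=0=\varphi(x_0)$. Then $S_A=(-\infty,0]$ and $\Sp(S_A-S_A)=\mathbb{R}$, yet for $y=1$ there is no $t>0$ with $x_0+ty\in S_A$; hence $x_0\notin\icr S_A$. Your fallback --- imposing $A>\varphi(x_0)$, which indeed costs nothing in the rest of the paper --- is the correct repair, and with it your proof stands as written: the affine majorant $(1-t/\tau)\varphi(x_0)+(t/\tau)\varphi(x_0+\tau y)$ is strictly below $A$ at $t=0$ and therefore stays below $A$ for all small $t>0$.

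For comparison, the paper argues by a different route: it writes $x\in\Sp(S_A-S_A)$ as $\alpha v-\beta u$ with $\{v,u\}\subset S_A-x_0$, puts $t=\tfrac{1}{2(\alpha+\beta)}$, and bounds $\varphi(x_0+tx)$ by Jensen's inequality applied to the three points $x_0$, $v+x_0$ and $x_0-u$ with weights $1-\alpha t-\beta t$, $\alpha t$, $\beta t$. That estimate silently assumes $\varphi(x_0-u)\le A$; but $x_0-u=2x_0-u'$ with $u'\in S_A$ is the reflection of $u'$ through $x_0$, which need not lie in $S_A$, nor even in $S$. In the counterexample above, $u=-1$ gives $x_0-u=1$ and $\varphi(1)=1>A$. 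So the paper's proof breaks at exactly the point your analysis flags, whereas your one-directional argument only ever evaluates $\varphi$ at points of the segment $[x_0,x_0+\tau y]\subset S$ and is the sounder of the two.
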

\begin{proof}Since $S_A$ is convex, every element $x \in  \Sp (S_A-S_A)$ can be represented in the form
$$
x = \alpha v - \beta u, \ \text{ where } \alpha >0, \ \beta > 0 \text{ and }  \{v,u\}\subset S_A-x_0.
$$
Let us put $t := \frac{1}{2(\alpha+\beta)}$. Then
$$\varphi(x_0 + tx) = \varphi(x_0 + \alpha tv + \beta t (-u)) = \varphi((1-\alpha t - \beta t) x_0 + \alpha t (v + x_0) + \beta t (x_0-u)) \leq$$$$\leq  (1-\alpha t - \beta t) A+ \alpha t A + \beta t A = A, $$
therefore $x_0 + tx \in S_A$ and thus $x_0 \in \icr S_A$.
\end{proof}
Denote
\begin{equation}\label{eq:c_a}
C_A := \{x \in S_A : \text{ there exists } \alpha > 0 \text{ such that } x_0 + \alpha (x-x_0) \in S_A \text{ and } x_0 + (-\alpha)(x-x_0) \in S_A\},
\end{equation}
where $S_A$ is the set defined in \eqref{eq:s_a}. The set $C_A$ by the definition is convex and symmetric with respect to the point $x_0$. If the set $S_A - x_0$ is symmetric, then $C_A = S_A$ (for any element from $S_A$ one may put $\alpha := 1$).
The ``capacity'' of the set $C_A$ can be measured by the linear span of $C_A-C_A$
(or by the affine hull of $C_A$). 
\begin{proposition} Let $S_A$ and $C_A$ be the sets defined in \eqref{eq:s_a} and \eqref{eq:c_a}. Then
\begin{equation} 
\Sp (C_A - C_A) = \Sp (S_A-S_A).\label{eq:spanseq}
\end{equation}
\end{proposition}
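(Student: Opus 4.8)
The plan is to establish the equality by proving the two inclusions separately. The inclusion $\Sp(C_A - C_A) \subseteq \Sp(S_A - S_A)$ is immediate: since $C_A \subseteq S_A$ by the definition \eqref{eq:c_a}, we have $C_A - C_A \subseteq S_A - S_A$, whence the inclusion of linear spans. All the work is in the reverse inclusion, which is where Lemma \ref{lemma_icr} enters.

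For the reverse inclusion, I would first observe that $x_0 \in C_A$ (in \eqref{eq:c_a} one may take any $\alpha > 0$, since $x_0 + \alpha(x_0 - x_0) = x_0 \in S_A$), so by the remark in the Introduction that $\Sp(S-S) = \Sp(S-b)$ for any $b$ in a convex set $S$, we have $\Sp(C_A - C_A) = \Sp(C_A - x_0)$ and likewise $\Sp(S_A - S_A) = \Sp(S_A - x_0)$; hence it suffices to prove $\Sp(S_A - x_0) \subseteq \Sp(C_A - x_0)$. Fix $v \in \Sp(S_A - x_0) = \Sp(S_A - S_A)$. By Lemma \ref{lemma_icr}, $x_0 \in \icr S_A$, and since both $v$ and $-v$ belong to $\Sp(S_A - S_A)$, the defining property of the relative algebraic interior yields $t > 0$ and $s > 0$ with $x_0 + tv \in S_A$ and $x_0 - sv \in S_A$. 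Put $r := \min\{t, s\} > 0$. Because $S_A$ is convex and $0 < r \le t$, the point $x_0 + rv$ lies on the segment $[x_0, x_0 + tv] \subseteq S_A$, so $x_0 + rv \in S_A$; by the same argument with $0 < r \le s$, also $x_0 - rv \in S_A$. Consequently the point $x := x_0 + rv$ satisfies the requirements of \eqref{eq:c_a} with the choice $\alpha := 1$, so $x \in C_A$. Therefore $rv = x - x_0 \in C_A - x_0$, and since $r > 0$ this gives $v \in \Sp(C_A - x_0)$. As $v$ was arbitrary, $\Sp(S_A - x_0) \subseteq \Sp(C_A - x_0)$, and combined with the first inclusion this proves \eqref{eq:spanseq}.

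There is no serious obstacle in this argument; the only step that requires a small amount of care is the passage from the two separate one-sided conditions $x_0 + tv \in S_A$ and $x_0 - sv \in S_A$ (which is all that the definition of $\icr S_A$ directly provides, possibly with different parameters in the two directions) to a single symmetric pair $x_0 + rv, x_0 - rv \in S_A$ needed to witness membership in $C_A$. This is resolved by taking $r$ to be the minimum of the two parameters and using convexity of $S_A$ to shrink each segment back toward $x_0$.
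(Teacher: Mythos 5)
Your proof is correct and follows essentially the same route as the paper's: both inclusions via $C_A\subset S_A$ for one direction and Lemma \ref{lemma_icr} for the other. In fact your write-up is more careful than the paper's at the key step, since you correctly apply the $\icr$ property to a direction $v\in\Sp(S_A-S_A)$ and then exhibit the point $x_0+rv\in C_A$ (rather than asserting that an arbitrary element of the span itself lies in $C_A$), and you handle the reconciliation of the two one-sided parameters via $\min$ and convexity.
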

\begin{proof}
Indeed, since $C_A \subset S_A$, we see that $\Sp (C_A - C_A) \subset \Sp (S_A-S_A)$. Let $x \in \Sp (S_A-S_A)$. It follows from Lemma \ref{lemma_icr} that $x_0 \in \icr S_A$; hence, there exists $t > 0$ such that $x_0 + t(x-x_0) \in S_A$ and $x_0 + t (x_0-x) \in S_A$. Therefore $x \in C_A$ and thus $\Sp (C_A - C_A) = \Sp (S_A-S_A)$.\qedhere
\end{proof}
Particularly if $\varphi$ is bounded above, then the equality \eqref{eq:spanseq} may be written as
$$ \Sp (C- C) = \Sp (S-S), \text{  where } C:= C_{\sup\limits_{x \in S}\varphi(x)}.$$
\subsection{The $\mu$-Lipschitz property}\label{lipschitz}
In this subsection we study a Lipschitz property of convex functions in general vector spaces.

Let $C$ be a convex subset of a real vector space such that $C$ is symmetric with respect to a point $p \in C$ and let $\mu$ denotes the Minkowski functional of the set $C-p$.
The Minkowski functional of any absorbing symmetric convex set is a seminorm (see, e.g., Rudin \cite[Theorem 1.35]{rudin}). In the next lemma we show that $C-p$ is absorbing in $\Sp (C-p)$. Therefore $\mu$ only takes finite values on the space $\Sp(C-p)$, and moreover $\mu$ is a seminorm in $\Sp(C-p)$.
\begin{lemma}\label{absorb}
Let $C$ be a convex subset of a real vector space such that $C$ is symmetric with respect to a point $p \in C$. Then the set $C-p$ is absorbing in $\Sp (C-p)$.
\end{lemma}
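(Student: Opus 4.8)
The plan is to unwind the definition of ``absorbing'' and reduce it, via the symmetry hypothesis, to a one-line convexity argument; no topology will be needed. Recall that I must show that for every $x \in \Sp(C-p)$ there is some $t>0$ with $x \in s(C-p)$ for all scalars $s$ with $|s|\ge t$ (the weaker form ``$x\in t(C-p)$ for some $t>0$'', which is all that is used for $\mu$ to be finite-valued, follows a fortiori). First I would record a convenient description of the linear span: since $C-p$ contains $0$ and satisfies $C-p=-(C-p)$, every $x \in \Sp(C-p)$ can be written as a finite sum $x=\sum_{i=1}^n \lambda_i v_i$ with each $\lambda_i>0$ and each $v_i\in C-p$. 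One obtains this from an arbitrary representation of $x$ as a linear combination of elements of $C-p$ by discarding the terms with zero coefficient and, whenever a coefficient is negative, replacing $v_i$ by $-v_i$; this is legitimate precisely because $C-p$ is symmetric.

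Next I would normalize. Put $\Lambda:=\sum_{i=1}^n\lambda_i$, treating the empty-sum case $x=0$ separately and trivially since $0\in C-p$. Then $x=\Lambda\sum_{i=1}^n(\lambda_i/\Lambda)v_i$, and $\sum_{i=1}^n \lambda_i/\Lambda=1$ with all weights positive, so convexity of $C-p$ yields $w:=\sum_{i=1}^n(\lambda_i/\Lambda)v_i\in C-p$; hence $x=\Lambda w\in\Lambda(C-p)$.

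Finally I would propagate this to all sufficiently large scalars: for any $s$ with $|s|\ge\Lambda$ one has $(\Lambda/|s|)\,(\operatorname{sgn}s)\,w\in C-p$, because it is a convex combination of $(\operatorname{sgn}s)w\in C-p$ and $0\in C-p$ (here the symmetry of $C-p$ is used once more, to guarantee $(\operatorname{sgn}s)w\in C-p$). Thus $\tfrac{1}{s}x=(\Lambda/s)w\in C-p$, i.e. $x\in s(C-p)$, and $C-p$ is absorbing in $\Sp(C-p)$. I do not expect any genuine obstacle here: the whole content is the sign bookkeeping enabled by symmetry together with one use of convexity, and the statement then feeds directly into the cited fact (Rudin \cite[Theorem 1.35]{rudin}) that the Minkowski functional of an absorbing symmetric convex set is a seminorm, now applied inside $\Sp(C-p)$.
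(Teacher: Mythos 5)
Your proof is correct and follows essentially the same route as the paper's: represent $x$ as a positive linear combination of elements of $C-p$ (using the symmetry to absorb negative signs), then use convexity to collapse it into $x=\Lambda w$ with $w\in C-p$, so $x\in\Lambda(C-p)$. The only differences are cosmetic — the paper reduces to a two-term decomposition $x=\alpha v-\beta u$ at the outset where you keep a general $n$-term sum, and you additionally spell out the ``for all $|s|\ge\Lambda$'' clause and the $x=0$ case, which the paper leaves implicit.
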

\begin{proof}
Let $x \in \Sp (C-p)\setminus \{0\}$. Since $C$ is convex, the element $x$ can be represented in the form
$$x = \alpha v - \beta u, \ \text{ where } \alpha >0, \ \beta > 0 \text{ and }  \{v,u\}\subset C-p.$$
Since $C-p$ is symmetric, $-u \in C-p$, hence
$$x = \alpha v + \beta (-u) = \frac{1}{\alpha + \beta} \left(\frac{\alpha}{\alpha+\beta} v + \frac{\beta}{\alpha+\beta}(-u)\right) \in \frac{1}{\alpha+\beta} (C-p) \subset C-p,$$
i.e. $C-p$ is absorbing in $\Sp (C-p)$.
\end{proof}
The next definition introduces the $\mu$-Lipschitz property.
\begin{definition}\label{def_lipsch}
Let $S$ be a subset of a real vector space, $\varphi:S\rightarrow\mathbb{R}$ be an arbitrary function, $D$ be a subset of $S$ and let $\mu$ be a Minkowski functional. We say that the function $\varphi:S\rightarrow\mathbb{R}$ is \textit{$\mu$-Lipschitz on the set $D$ with the constant $L>0$} if for all pairs $\{u,v\}\subset D$ the next inequality holds:
$$ |\varphi(x) - \varphi(y)| \leq L \mu(x-y). $$
The constant $L$ is called \textit{a $\mu$-Lipschitz constant of the function $\varphi$ on the set $C$}. If the constant $L$ is not important in a context, then we simply say that the function \textit{$\varphi$ is $\mu$-Lipschitz on the set $D$.}
We also say that $\varphi$ is \textit{locally $\mu$-Lipschitz on the set $D$} if for every point $x \in D$ there exists $\varepsilon > 0$ such that $\varepsilon (C-p) + x \subset S$ and $\varphi$ is $\mu$-Lipschitz on the set $\varepsilon (C-p) + x$.
\end{definition}

In the next theorem, which is simply obtained from the locally convex case, we establish a $\mu$-Lipschitz property for some convex functions.

\begin{theorem}\label{th_main}
Let $S$ be a non-empty convex subset of a real vector space, $\varphi:S\rightarrow \mathbb{R}$ be a convex function and let $C$ be a convex subset of $S$ that is symmetric with respect to a point $p \in C$ and such that $\varphi|_C$ is bounded above. Let $\mu$ denotes the Minkowski functional of the set $C-p$. Then for all $\varepsilon  \in (0,1)$ and for all pairs \text{$\{u,v\} \subset \varepsilon (C-p)+p$} the next inequality holds:
\begin{equation}|\varphi(u) - \varphi(v)| \leq M\frac{1+\varepsilon}{1-\varepsilon} \mu(u-v),\ \text{ where } M:=  \sup_{x \in C}(\varphi(x)-\varphi(p)),\label{eq:lipsch}\end{equation}
i.e. $\varphi$ is $\mu$-Lipschitz on the set $\varepsilon (C-p) + p$ with the constant $M(1+\varepsilon)(1-\varepsilon)^{-1}$. 
\end{theorem}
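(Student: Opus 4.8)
The plan is to reduce the statement to the classical locally convex (indeed finite-dimensional-style) estimate by working entirely inside the seminormed space $\Sp(C-p)$ equipped with $\mu$. By Lemma \ref{absorb} the set $C-p$ is absorbing in $\Sp(C-p)$ and $\mu$ is a seminorm there; moreover $C-p$ is symmetric and convex with $\{x:\mu(x)<1\}\subset C-p\subset\{x:\mu(x)\le 1\}$, and $\psi(x):=\varphi(x+p)-\varphi(p)$ is a convex function on $C-p$ with $\psi(0)=0$ and $\sup_{C-p}\psi=M$. Thus it suffices to prove: if $\psi$ is convex on the $\mu$-ball $\{x:\mu(x)\le 1\}$ with $\psi(0)=0$ and $\psi\le M$ there, then $|\psi(u')-\psi(v')|\le M\frac{1+\varepsilon}{1-\varepsilon}\mu(u'-v')$ whenever $\mu(u'),\mu(v')\le\varepsilon$. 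This is the standard argument that a convex function bounded above on a ball is Lipschitz on a smaller concentric ball; I will carry it out with explicit constants rather than invoke a black box, since the seminorm (as opposed to norm) setting means I should be a little careful.

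The key steps are as follows. First I would fix $u',v'$ with $\mu(u'),\mu(v')\le\varepsilon$, and set $r:=\mu(u'-v')$; I may assume $r>0$ (if $r=0$ a separate, easier estimate using only the upper bound and convexity through the point $0$ gives $\psi(u')=\psi(v')$, or one notes both sides can be handled by the same chain below in the limit). Next I introduce the point $w:=v'+\frac{1-\varepsilon}{r}(v'-u')$ — equivalently, extend the segment from $u'$ through $v'$ by the amount needed so that $\mu(w)\le \mu(v')+ (1-\varepsilon)\le 1$, hence $w\in C-p$ and $\psi(w)\le M$. Writing $v'$ as the convex combination $v'=\lambda u'+(1-\lambda)w$ with $\lambda=\frac{1-\varepsilon}{1-\varepsilon+r}$, convexity of $\psi$ yields $\psi(v')\le\lambda\psi(u')+(1-\lambda)\psi(w)$, which rearranges to
$$\psi(v')-\psi(u')\le (1-\lambda)\bigl(\psi(w)-\psi(u')\bigr)=\frac{r}{1-\varepsilon+r}\bigl(\psi(w)-\psi(u')\bigr).$$
Then I bound $\psi(w)-\psi(u')$ from above: $\psi(w)\le M$, and $\psi(u')\ge -M$ because $0$ lies on the segment between $u'$ and $-\frac{\mu(u')}{\mu(-u')}\cdot(-u')$-type reflection — more simply, $0=\tfrac12 u'+\tfrac12(-u')$ with $-u'\in C-p$, so $0=\psi(0)\le\tfrac12\psi(u')+\tfrac12\psi(-u')\le\tfrac12\psi(u')+\tfrac12 M$, giving $\psi(u')\ge -M$. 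Hence $\psi(w)-\psi(u')\le 2M$, and since $1-\varepsilon+r\ge 1-\varepsilon$ we get $\psi(v')-\psi(u')\le \frac{2Mr}{1-\varepsilon}$. To sharpen the constant to $M\frac{1+\varepsilon}{1-\varepsilon}$ I would instead choose $w$ only as far out as $\mu(w)=1$ permits given $\mu(v')\le\varepsilon$, i.e. push a distance $1-\varepsilon$, and bound $\psi(w)-\psi(u')$ using that $u'$ itself is a convex combination of $0$ and a boundary point to get $\psi(u')\ge -M\varepsilon$ (scaling: $u'=\varepsilon\cdot(u'/\varepsilon)$ with $\mu(u'/\varepsilon)\le 1$, and $0=\frac{\varepsilon}{1+\varepsilon}(u'/\varepsilon\cdot(-1))+\ldots$, yielding the factor $\varepsilon$); then $\psi(w)-\psi(u')\le M+M\varepsilon=M(1+\varepsilon)$ and the prefactor $\frac{r}{1-\varepsilon+r}\le \frac{r}{1-\varepsilon}$ gives exactly $\psi(v')-\psi(u')\le M\frac{1+\varepsilon}{1-\varepsilon}\,\mu(u'-v')$. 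Swapping the roles of $u'$ and $v'$ gives the reverse inequality, hence the absolute value.

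The main obstacle I anticipate is purely bookkeeping: getting the constant to come out as exactly $M\frac{1+\varepsilon}{1-\varepsilon}$ rather than something like $\frac{2M}{1-\varepsilon}$ requires choosing the auxiliary point $w$ on the segment at the right place and combining two convexity inequalities (one to push past $v'$ toward the boundary, one to bound $\psi(u')$ below via the symmetry point $0$) with matching denominators. A secondary point to handle cleanly is that $\mu$ is only a seminorm, so "$\mu(w)\le 1$ implies $w\in C-p$" needs $\mu(w)<1\Rightarrow w\in C-p$ from Lemma \ref{absorb}-type reasoning together with the fact that points with $\mu=1$ can be reached as limits; to avoid boundary subtleties I would actually run the whole argument with $\varepsilon$ replaced by $\varepsilon'\in(\varepsilon,1)$, obtain the estimate on the smaller ball with constant $M\frac{1+\varepsilon'}{1-\varepsilon'}$, and then let $\varepsilon'\downarrow\varepsilon$ — or, more directly, note that for any $\{u,v\}\subset\varepsilon(C-p)+p$ the relevant pushed point $w$ has $\mu(w-p)\le\varepsilon+(1-\varepsilon)<1$ strictly, so $w\in C-p+p=C$ and $\varphi(w)\le\sup_C\varphi$ holds without any closure argument. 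With that observation the proof is a short chain of inequalities.
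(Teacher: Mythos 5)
Your proof is essentially correct, but it takes a genuinely different route from the paper. The paper's proof is short and non-elementary: it equips $\Sp(C-p)$ with the strongest locally convex topology, observes via Lemma \ref{absorb} that $\mu$ is then a continuous seminorm, and invokes Theorem 2.2.11 of Z\u alinescu to obtain \eqref{eq:lipsch} wholesale. You instead unpack that cited theorem and prove the estimate from scratch by the classical two-step convexity argument: push the segment from $u'$ through $v'$ out to a point $w$ with $\mu(w)<1$ to bound $\psi(v')-\psi(u')$ by $\tfrac{r}{\rho+r}(\psi(w)-\psi(u'))$, and use the symmetry of $C-p$ through $0$ to get the lower bound $\psi(u')\geq-\varepsilon M$, which is exactly what produces the factor $1+\varepsilon$ in the numerator. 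Your version is self-contained and makes visible where each piece of the constant $M(1+\varepsilon)(1-\varepsilon)^{-1}$ comes from, at the cost of the bookkeeping you describe; the paper's version is shorter but leans on an external theorem stated for separated locally convex spaces (and must remark separately that separation is not actually used). One small correction: in your ``more direct'' handling of the boundary issue you assert $\mu(w-p)\leq\varepsilon+(1-\varepsilon)<1$, but $\varepsilon+(1-\varepsilon)=1$, so that inequality is not strict and $w$ need not lie in $C-p$ (which is only guaranteed to contain $\{\mu<1\}$). This is harmless because your other proposed fix works: push out only by $\rho<1-\varepsilon$, so $\mu(w-p)\leq\varepsilon+\rho<1$, obtain the bound with $\rho$ in the denominator, and let $\rho\uparrow 1-\varepsilon$. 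The degenerate case $\mu(u'-v')=0$ is also handled correctly by sending the pushed point to infinity along the $\mu$-null direction.
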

For instance, the function $\varphi$ is bounded above on a set $C_A$, which is defined in \eqref{eq:c_a}. 
\begin{proof} 
Let us endow the space $\Sp (C-p)$ with the strongest locally convex topology (see, for example, Edwards \cite[Subsection 1.10.1]{edwards}) and let $\mu$ be the Minkowski functional of the set $C-p$. By Lemma \ref{absorb}, the set $C-p$ is absorbing in $\Sp (C-p)$; hence, the Minkowski functional $\mu$ is a continuous seminorm in $\Sp(C-p)$. Then the set
$U:=\{x \in \Sp (C-p): \mu(x) \leq 1\}$
is a closed convex neighbourhood of zero and $C-p$ is a subset of $U$. Note that $\mu_U = \mu$ (see, e.g., Rudin \cite[Theorem 1.35]{rudin}). Using Theorem 2.2.11 from Z\u alinescu \cite{zalinescu}, we obtain that for every $\varepsilon \in (0,1)$ and for all $\{u,v\}\subset \varepsilon (C-p) + p$ the inequality in \eqref{eq:lipsch} holds.
\end{proof}
Theorem 2.2.11 from Z\u alinescu \cite{zalinescu}, which is cited in Theorem \ref{th_main}, is actually proved in the case of separated locally convex spaces (the assumption of the whole chapter that consists \text{Theorem 2.2.11}), though it doesn't use separability, and thus the theorem can be applied to the strongest locally convex topology. In Counterexample \ref{cntex_epsilon} it is shown that the function $\varphi$ from Theorem \ref{th_main} may not be $\mu$-Lipschitz on the set $B_\mu(p,1)$; hence, a choice of $\varepsilon \in (0,1)$ is necessary. It also can be inferred from Counterexample \ref{cntex_convexity} that Theorem \ref{th_main} is not valid for quasiconvex functions. Counterexample \ref{cntex_yravn} shows that the condition of symmetry of the set $C$ with respect to some point cannot be omitted.
\begin{theorem} 
\label{loc_lip}
Let $S$ be a convex subset of a real vector space such that $\icr S \neq \emptyset$, $\varphi:S\rightarrow\mathbb{R}$ be a convex function and let $C$ be a convex subset of $S$ that is symmetric with respect to a point $p \in C$ and such that $\varphi|_C$ is bounded above. Let $\mu$ be the Minkowski functional of the set $C-p$. Then the function $\varphi$ is locally $\mu$-Lipschitz on $\icr S$.
\end{theorem}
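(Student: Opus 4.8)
The plan is to deduce this from Theorem \ref{th_main} by replacing $C$ with a shrunken copy recentred at the point under consideration. Fix $x \in \icr S$. Since $p \in C \subset S$ and $x \in S$, the vector $x - p$ lies in $\Sp(S-S)$, so by the definition of the relative algebraic interior there is $\lambda > 0$ with
$$ p' := x + \lambda(x-p) \in S. $$
Put $\theta := \dfrac{\lambda}{1+\lambda} \in (0,1)$, so that $x = \theta p + (1-\theta)p'$, and define $C' := \theta C + (1-\theta)p'$.

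First I would verify that Theorem \ref{th_main} applies to $\varphi$, $S$ and $C'$. The set $C'$ is convex (an affine image of $C$) and contained in $S$ (each of its points is a convex combination of a point of $C\subset S$ and $p'\in S$). Because $C$ is symmetric with respect to $p$, the set $C'$ is symmetric with respect to $\theta p + (1-\theta)p' = x$, and $x \in C'$ since $p \in C$. Moreover, for $c\in C$,
$$ \varphi\big(\theta c + (1-\theta)p'\big) \le \theta\varphi(c) + (1-\theta)\varphi(p') \le \theta\,\sup_{C}\varphi + (1-\theta)\varphi(p') < +\infty, $$
so $\varphi|_{C'}$ is bounded above. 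Finally, a direct computation gives $C' - x = \theta C + (1-\theta)p' - \theta p - (1-\theta)p' = \theta(C-p)$; hence $\Sp(C'-x) = \Sp(C-p)$ and the Minkowski functional $\mu'$ of $C'-x$ equals $\theta^{-1}\mu$ on $\Sp(C-p)$.

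Next I would apply Theorem \ref{th_main} to $\varphi$, $S$, $C'$ and $\mu'$ with, say, $\varepsilon = \tfrac12$. It produces a constant $L_0 := 3\sup_{C'}\big(\varphi - \varphi(x)\big) \ge 0$ with $|\varphi(u)-\varphi(v)| \le L_0\,\mu'(u-v)$ for all $\{u,v\}\subset \tfrac12(C'-x)+x = \tfrac{\theta}{2}(C-p)+x$; using $\mu' = \theta^{-1}\mu$ and replacing $L_0$ by any positive $L$ with $L \ge L_0/\theta$, this says exactly that $\varphi$ is $\mu$-Lipschitz on $\tfrac{\theta}{2}(C-p)+x$ with the constant $L$. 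It remains to observe that $\varepsilon_x := \tfrac{\theta}{2} > 0$ satisfies $\varepsilon_x(C-p)+x \subset S$: indeed $\tfrac{\theta}{2}(C-p) \subset \theta(C-p) = C'-x$ because $0 \in C-p$ and $C-p$ is convex, whence $\varepsilon_x(C-p)+x \subset C' \subset S$. As $x \in \icr S$ was arbitrary, $\varphi$ is locally $\mu$-Lipschitz on $\icr S$.

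The only delicate point — and the one I would double-check most carefully — is the identity $C'-x = \theta(C-p)$ together with the fact that $C'$ really is symmetric about $x$, even when $x-p \notin \Sp(C-p)$; this is what makes $\mu'$ an honest scalar multiple of $\mu$ rather than merely comparable to it, so that the $\mu'$-Lipschitz estimate furnished by Theorem \ref{th_main} transfers verbatim into a $\mu$-Lipschitz estimate. The existence of the auxiliary point $p'$ is where the hypothesis $x \in \icr S$ is used, and everything else is routine convexity bookkeeping.
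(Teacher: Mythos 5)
Your proposal is correct and follows essentially the same route as the paper: both use $x \in \icr S$ to produce a point $p' = x + \lambda(x-p) \in S$ beyond $x$, form the scaled symmetric copy $\theta(C-p)+x \subset S$ on which $\varphi$ is bounded above via the convex combination through $p'$, and then invoke Theorem \ref{th_main} together with the scaling identity $\mu_{\alpha(C-p)} = \alpha^{-1}\mu$. Your write-up is merely more explicit in verifying the hypotheses of Theorem \ref{th_main} and in converting the $\mu'$-Lipschitz estimate into a $\mu$-Lipschitz one.
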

\begin{proof}
Let us fix any $\varepsilon \in (0,1)$. By \text{Theorem \ref{th_main}}, the function $\varphi$ is $\mu$-Lipschitz on $\varepsilon (C-p)+p$. Let $x \in \icr S$. Then there exists $t> 0$ such that $x + t(x-p) \in S$. Let $y \in \varepsilon (C-p) + p$. Since $S$ is convex,
\begin{equation} \frac{t}{1+t} y + \frac{1}{1+t} (x + t(x-p)) \in S,\label{eq:point_behind} \end{equation}
therefore,
$$ \frac{t}{1+t}(\varepsilon (C-p) + p) + \frac{1}{1+t} (x + t(x-p)) = \frac{t\varepsilon}{1+t}(C-p) + x  \subset S.$$
It follows from \eqref{eq:point_behind} that $\varphi$ is bounded above on $\varepsilon t (1+t)^{-1} (C-p) + x$. Indeed,
$$
\varphi\left(\frac{t}{1+t} y + \frac{1}{1+t} (x + t(x-p))\right) \leq \frac{t}{1+t} \sup_{y \in  C}\varphi(y) + \frac{1}{1+t} \varphi(x+t(x-p)) < +\infty.
$$
Note that for any $\alpha > 0$ and $u \in \Sp (C-p)$ the equality $\mu_{\alpha (C-p)}(u) = \alpha^{-1}\mu (u)$ holds. Therefore, applying Theorem \ref{th_main} to the set $\varepsilon t (1+t)^{-1}(C-p) + x$, we see that there exists $\lambda \in (0,1)$ such that $\varphi$ is $\mu$-Lipschitz on $\lambda (C-p) + x$. Thus $\varphi$ is locally $\mu$-Lipschitz.
\end{proof}
\begin{corollary}\label{loc_lip_cor}
Let $S$ be a convex subset of a real vector space such that $\icr S \neq \emptyset$, $\varphi:S\rightarrow\mathbb{R}$ be a convex function and let $C_A$ be a set constructed in \eqref{eq:c_a}. Let $\mu$ denotes the Minkowski functional of the set $C_A-x_0$, where $x_0$ is a point from $C_A$ such that $C_A-x_0 = -(C_A-x_0)$. Then the function $\varphi$ is locally $\mu$-Lipschitz on $\icr S$.
\end{corollary}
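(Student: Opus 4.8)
The plan is to observe that Corollary \ref{loc_lip_cor} is an immediate specialization of Theorem \ref{loc_lip}, so the only work is to check that the set $C_A$ together with the point $x_0$ verifies every hypothesis of that theorem. First I would recall, from the construction in \eqref{eq:c_a} and the remark following it, that $C_A$ is a convex subset of $S$ which is symmetric with respect to the point $x_0$; moreover $x_0 \in C_A$ (taking $\alpha := 1$ in \eqref{eq:c_a} gives $x_0 + \alpha(x_0-x_0) = x_0 \in S_A$, which holds since $A \geq \varphi(x_0)$ by the choice of $A$). Thus $x_0$ plays the role of the symmetry point $p$ required in Theorem \ref{loc_lip}.

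Next I would verify that $\varphi|_{C_A}$ is bounded above: since $C_A \subset S_A$ and, by the definition \eqref{eq:s_a}, $\varphi(x) \leq A$ for every $x \in S_A$, we get $\sup_{x \in C_A}\varphi(x) \leq A < +\infty$. Together with the standing assumption that $S$ is convex with $\icr S \neq \emptyset$ and $\varphi : S \to \mathbb{R}$ convex, all the hypotheses of Theorem \ref{loc_lip} are in force with the substitution $C := C_A$, $p := x_0$, and $\mu$ the Minkowski functional of $C_A - x_0$.

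I would then simply invoke Theorem \ref{loc_lip}, which yields that $\varphi$ is locally $\mu$-Lipschitz on $\icr S$, completing the proof. There is no real obstacle here: the content of the corollary lies entirely in the earlier construction of $C_A$ (Subsection \ref{ca}) and in Theorem \ref{loc_lip} itself; the corollary only records that the abstract symmetric set $C$ appearing there can always be produced concretely from a sublevel set of $\varphi$, so that the local $\mu$-Lipschitz conclusion is available for \emph{every} convex $\varphi$ on a set with non-empty relative algebraic interior, with no further hypotheses.
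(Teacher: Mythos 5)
Your proof is correct and follows exactly the paper's route: the paper likewise notes that $\varphi|_{C_A}$ is bounded above (by $A$, since $C_A \subset S_A$) and immediately invokes Theorem \ref{loc_lip}; you merely spell out the verification of the remaining hypotheses (convexity and symmetry of $C_A$ about $x_0$), which the paper leaves implicit from the construction in \eqref{eq:c_a}.
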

\begin{proof}
Since the function $\varphi|_{C_A}$ is bounded above, the statement of the corollary follows immediately  from \text{Theorem \ref{loc_lip}.\qedhere}
\end{proof}
Corollary \ref{loc_lip_cor} is illustrated in Example \ref{example_conv} with Proposition \ref{lipschlemma}.

\section{The Clarke subdifferential of $\mu$-Lipschitz functions}\label{section_clarke}
In this section it is considered an abritrary function defined on a convex set that is symmetric with respect to some point. This function is assumed to be Lipschitz with respect to a Minkowski functional. In \text{Subsection \ref{subsection_gdd}} it is studied a generalized directional derivative of the considered function. In \text{Subsection \ref{subsection_elprop}} we define a Clarke subdifferential that is appropriate to the studied case, and we investigate its properties. In \text{Subsection \ref{subsection_calculus}} it is provided a calculus of the defined subdifferential. The properties and the calculus of the Clarke subdifferential are simply translated from the case of Banach spaces to the case of vector spaces (or to topological vector spaces if continuity properties are used)  using certain type of quotient spaces.

Let $C$ be a convex subset of a real vector space such that $C$ is symmetric with respect to a point $x_0 \in C$. 
Denote
\begin{equation} \mu := \text{the Minkowski functional of the set }C-x_0.\label{eq:muc}
\end{equation}
 and let $\varphi : C \rightarrow \mathbb{R}$ be a $\mu$-Lipschitz function with the constant $L>0$. 
Denote
\begin{equation}\label{eq:x_0}X_0:= \Sp (C-x_0), \ Y := X_0/_{\Ker \mu}. \end{equation}
We endow the space $Y$ with a structure of a normed space. The norm is defined as
\begin{equation}\label{eq:pi}\| \pi(x) \| := \mu(x), \ \text{ where } \pi:X_0 \rightarrow Y \text{ is a quotient map.}\end{equation}
Next, put
\begin{equation}\overline{Y} := \text{ the complement of the space }Y\text{ that is a Banach space.}\label{eq:y_c}\end{equation}
Consider the following commutative diagram:
\begin{equation}\xymatrix{ X_0 \ar[d]_{m} \ar[r]^{\ \ \pi} &Y \ar[ld]^i\\ \overline{Y}}\label{eq:comdiagr}\end{equation}
where $i$ is a linear isometry and $m := i \circ \pi$.
Next, define the function
\begin{equation}\label{eq:psi} \psi (x) := \lim_{n \rightarrow \infty} \varphi(y_n), \text{ where } \{y_n\}_{n=1,2,\ldots} \subset C, \ m(y_n-x_0) \rightarrow x.\end{equation}
Since $\varphi$ is $\mu$-Lipschitz, the function $\psi$ is well-defined on $\cl m (C) $. We see that in the space $\overline{Y}$
$$ \Int \cl m (C) = B(0,1): = \{x \in \overline{Y} : \|x\| < 1\}.$$
Each time when a vector topology in $X_0$ is considered, we supply the set $B_{\mu}(x_0,1)$ with the following topology:
\begin{equation}\label{eq:topology} U \text{ is open in } B_{\mu}(x_0,1) \ \Leftrightarrow \ U - x_0 \text{ is open in the vector topology considered in } X_0.\end{equation}
\subsection{The elementary properties of $\varphi^\circ$}\label{subsection_gdd}
It is natural to define the generalized directional derivative of the function $\varphi$ as
\begin{equation}\label{eq:gdd} \varphi^\circ(x,v) := \limsup_{\substack{\mu(y-x)\rightarrow 0, \\ t\rightarrow 0+}}\frac{\varphi(y+tv) - \varphi(y)}{t},\ \ x \in B_{\mu}(x_0,1), \ v \in X_0.\end{equation}
If the set $C$ is a ball in a Banach space, then such a definition of generalized directional derivative coincides with Clarke's definition; therefore, the notation is not ambiguous.
\begin{lemma}\label{equality_of_gdd}
Let $C$ be a convex subset of a real vector space such that $C$ is symmetric with respect to a point $x_0 \in C$. Let $\mu$ be the Minkowski functional of the set $C-x_0$ and let $\varphi : C \rightarrow \mathbb{R}$ be a $\mu$-Lipschitz function with the constant $L>0$. Let $\psi$ be the function defined in \eqref{eq:psi}. Then the generalized directional derivatives $\psi^\circ$ and $\varphi^\circ$ coincide:
$$\psi^\circ(m(x-x_0),m(v)) = \varphi^\circ(x,v),\ \ x \in B_{\mu}(x_0,1), \ v \in X_0,$$
where $\varphi^\circ(\cdot,\cdot)$ is defined in \eqref{eq:gdd}, $m$ is defined in \eqref{eq:comdiagr} and $X_0$ is defined in \eqref{eq:x_0}.
\end{lemma}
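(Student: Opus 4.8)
The plan is to unwind both generalized directional derivatives to their definitions and to exploit the fact that $m=i\circ\pi$ relates the two ``length'' measures exactly: $\|m(z)\|=\|\pi(z)\|=\mu(z)$ for every $z\in X_0$, so that $\mu(y-x)\to 0$ in $X_0$ is equivalent to $\|m(y-x_0)-m(x-x_0)\|\to 0$ in $\overline Y$, and the increments coincide by the very construction of $\psi$ in \eqref{eq:psi}. Concretely, fix $x\in B_\mu(x_0,1)$ and $v\in X_0$, and write $\bar x:=m(x-x_0)$, $\bar v:=m(v)$. First I would spell out that $\psi$ agrees with $\varphi$ up to the translation and the map $m$: for any $y\in C$ one has $\psi(m(y-x_0))=\varphi(y)$, because the constant sequence $y_n=y$ is admissible in \eqref{eq:psi}. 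Hence for $y\in C$ with $y+tv\in C$,
$$\frac{\psi(m(y-x_0)+t\bar v)-\psi(m(y-x_0))}{t}=\frac{\varphi(y+tv)-\varphi(y)}{t},$$
so the two difference quotients are literally the same quantity.

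The core of the argument is then a comparison of the two $\limsup$'s, which I would do by a double inequality. For the inequality $\varphi^\circ(x,v)\le\psi^\circ(\bar x,\bar v)$: any net realizing the $\limsup$ in \eqref{eq:gdd}, i.e. $y_\alpha\in C$ with $\mu(y_\alpha-x)\to 0$ and $t_\alpha\to 0+$, maps under $z\mapsto m(z-x_0)$ to points $\bar y_\alpha\in\cl m(C)$ with $\|\bar y_\alpha-\bar x\|=\mu(y_\alpha-x)\to 0$; by the displayed identity above the corresponding difference quotients for $\psi$ agree with those for $\varphi$, so their $\limsup$ is dominated by the supremum defining $\psi^\circ(\bar x,\bar v)$. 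For the reverse inequality I would take a net $(\bar y_\beta,t_\beta)$ with $\bar y_\beta\to\bar x$ in $\overline Y$, $t_\beta\to 0+$, realizing $\psi^\circ(\bar x,\bar v)$; here $\bar y_\beta$ need not lie in $m(C)$ but only in its closure, so I must approximate: since $m(C)$ is dense in $\cl m(C)$ and $\bar x\in\Int\cl m(C)=B(0,1)$, I can pick $y_\beta\in C$ with $m(y_\beta-x_0)$ close enough to $\bar y_\beta$ (closer than, say, $t_\beta^2$) and with $y_\beta+t_\beta v\in C$ for $\beta$ large; then using the $\mu$-Lipschitz bound $|\varphi(y_\beta+t_\beta v)-\psi(\bar y_\beta+t_\beta\bar v)|\le L\|m(y_\beta-x_0)-\bar y_\beta\|\le Lt_\beta^2$ (and the analogous bound without the $t_\beta v$ shift), the difference quotients differ by at most $2Lt_\beta\to 0$, so the $\limsup$ of the $\varphi$-quotients is $\ge\psi^\circ(\bar x,\bar v)$.

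The main obstacle is precisely this density/approximation step in the second inequality: one has to make sure that for a net $\bar y_\beta$ in $\cl m(C)$ converging to an interior point $\bar x$ one can genuinely pull back to points $y_\beta\in C$ with both $y_\beta$ and $y_\beta+t_\beta v$ in $C$, controlling the error of order smaller than $t_\beta$. Since $\bar x\in B(0,1)$ is interior and $t_\beta\to 0+$, for large $\beta$ the point $\bar y_\beta+t_\beta\bar v$ also lies in the open unit ball, so a preimage in $C$ exists; quantifying the room left near $\bar x$ (a fixed ball $B(\bar x,\delta)\subset B(0,1)$) lets one absorb both the $O(t_\beta^2)$ perturbation and the $t_\beta\bar v$ shift once $\|\bar v\|t_\beta<\delta/2$. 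Everything else is the bookkeeping of $\limsup$ over nets together with the isometry identity $\|m(\cdot)\|=\mu(\cdot)$; no topological assumption on $X$ is used, only that $\overline Y$ is the Banach completion of the normed quotient $Y$.
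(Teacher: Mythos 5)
Your proposal is correct and follows essentially the same route as the paper's proof: both directions are handled by the same double inequality, transporting a net (the paper uses sequences) that realizes each $\limsup$ through $m$ and using the isometry $\|m(\cdot)\|=\mu(\cdot)$ to identify the difference quotients. The only difference is that you spell out the density/approximation step for the inequality $\psi^\circ\le\varphi^\circ$ (choosing preimages in $C$ within $o(t_\beta)$ of $\bar y_\beta$ and absorbing the error via the Lipschitz constant of $\psi$), a detail the paper compresses into the single phrase ``since $m(X_0)$ is dense in $\overline{Y}$.''
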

\begin{proof}
Let us fix any $x \in B_{\mu}(x_0,1)$ and $v \in X_0$. There exist a sequence $\{y_n\}_{n=1,2,\ldots}$ in $B_\mu(x_0,1)$ and a sequence $\{t_n\}_{n=1,2,\ldots}$ of positive real numbers such that $\mu(y_n - x) \rightarrow 0$, $t_n \rightarrow 0+$ and
$$ \varphi^\circ(x,v) = \limsup_{n \rightarrow \infty} \frac{\varphi(y_n+t_nv) - \varphi(y_n)}{t_n}.$$
Then
$$ \varphi^\circ(x,v) = \limsup_{n \rightarrow \infty} \frac{\psi(m(y_n-x_0) + t_n m(v)) - \psi(m(y_n-x_0))}{t_n} \leq \psi^\circ(m(x-x_0),m(v)).$$
On the other hand, since $m(X_0)$ is dense in $\overline{Y}$, there exist a sequence $\{u_n\}_{n=1,2,\ldots}$ in $B_\mu(x_0,1)$ and a sequence $\{t_n\}_{n=1,2,\ldots}$ of positive real numbers such that $\|m(u_n-x_0) - m(x-x_0)\| \rightarrow 0$, $t_n \rightarrow 0+$ and
$$\psi^\circ(m(x-x_0), m(v)) = \limsup_{n\rightarrow \infty}\frac{\psi(m(u_n-x_0) + t_n v) - \psi(m(u_n-x_0))}{t_n}.$$
Then
$$\psi^\circ(m(x-x_0),m(v)) = \limsup_{n\rightarrow \infty}\frac{\varphi(u_n + t_n v) - \varphi(u_n)}{t_n} \leq \varphi^\circ(x,v).$$
Thus $\psi^\circ(m(x-x_0),m(v)) = \varphi^\circ(x,v)$.
\end{proof}
The following proposition describes the algebraic properties of $\varphi^\circ$, \text{which are taken from $\psi^\circ$.} In case of a Banach space, see Clarke \cite[Proposition 2.1.1]{clarke_book}.
\begin{proposition}\label{prop_alg}Let $C$ be a convex subset of a real vector space such that $C$ is symmetric with respect to a point $x_0 \in C$. Let $\mu$ be the Minkowski functional of the set $C-x_0$ and let $\varphi : C \rightarrow \mathbb{R}$ be a $\mu$-Lipschitz function with the constant $L>0$. Let $\varphi^\circ$ be the function defined in \eqref{eq:gdd}  and $X_0$ be the space defined in \eqref{eq:x_0}. Then for all $x \in B_{\mu}(x_0,1)$ the following statements hold:
\begin{enumerate}[(i)]
\item The function $\varphi^\circ(x,\cdot)$ is positive homogeneous and subadditive;
\item The function $\varphi^\circ(x,\cdot)$ is $\mu$-Lipschitz on $X_0$ with the constant $L$;
\item For all $v \in X_0$: $\varphi^\circ(x,-v) = (-\varphi)^\circ(x,v)$.
\end{enumerate}
\end{proposition}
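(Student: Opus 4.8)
\emph{Plan of proof.} The plan is to reduce all three assertions to the classical Banach-space statement (Clarke, \cite[Proposition 2.1.1]{clarke_book}) by transporting them through the linear isometry $m$ and Lemma \ref{equality_of_gdd}.

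First I would check that the function $\psi$ from \eqref{eq:psi} is $\|\cdot\|$-Lipschitz on $\cl m(C)$ with the same constant $L$. Given $z_1,z_2 \in \cl m(C)$, choose sequences $\{y_n\}$, $\{y_n^\prime\}$ in $C$ with $m(y_n - x_0) \rightarrow z_1$ and $m(y_n^\prime - x_0) \rightarrow z_2$; then $\psi(z_j)$ is the limit of the corresponding values of $\varphi$, and since $\mu(v) = \|m(v)\|$ for every $v \in X_0$ (because $m = i\circ\pi$ with $i$ an isometry and $\|\pi(v)\| = \mu(v)$), the $\mu$-Lipschitz inequality for $\varphi$ passes to the limit and gives $|\psi(z_1) - \psi(z_2)| \leq L\|z_1 - z_2\|$. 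Since $B(0,1) = \Int\cl m(C)$ (as recorded just before \eqref{eq:topology}), for each $x \in B_\mu(x_0,1)$ the point $m(x-x_0)$ lies in $B(0,1)$, so $\psi$ is Lipschitz of rank $L$ on a neighbourhood of $m(x-x_0)$ in the Banach space $\overline{Y}$. Hence Clarke's Proposition 2.1.1 applies to $\psi$ at $m(x-x_0)$: the map $w \mapsto \psi^\circ(m(x-x_0),w)$ is positive homogeneous and subadditive on $\overline{Y}$, is Lipschitz of rank $L$, and satisfies $\psi^\circ(m(x-x_0),-w) = (-\psi)^\circ(m(x-x_0),w)$.

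Next I would pull these facts back using Lemma \ref{equality_of_gdd}, which gives $\varphi^\circ(x,v) = \psi^\circ(m(x-x_0),m(v))$ for all $v \in X_0$. Linearity of $m$ immediately yields positive homogeneity and subadditivity of $\varphi^\circ(x,\cdot)$, which is (i). For (ii), for $v_1,v_2 \in X_0$ one writes
$$|\varphi^\circ(x,v_1) - \varphi^\circ(x,v_2)| = |\psi^\circ(m(x-x_0),m(v_1)) - \psi^\circ(m(x-x_0),m(v_2))| \leq L\|m(v_1) - m(v_2)\| = L\mu(v_1 - v_2),$$
using again $\|m(\cdot)\| = \mu(\cdot)$ together with linearity of $m$. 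For (iii), note that $-\varphi$ is $\mu$-Lipschitz with the same constant $L$ and that the function attached to $-\varphi$ by \eqref{eq:psi} is exactly $-\psi$; Lemma \ref{equality_of_gdd} applied to $-\varphi$ then gives $(-\varphi)^\circ(x,v) = (-\psi)^\circ(m(x-x_0),m(v))$, and combining this with $\varphi^\circ(x,-v) = \psi^\circ(m(x-x_0),-m(v)) = (-\psi)^\circ(m(x-x_0),m(v))$ finishes the proof.

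I expect the only genuinely delicate point to be the preliminary bookkeeping around $\psi$: ensuring it is well-defined and Lipschitz on a full neighbourhood of $m(x-x_0)$ inside $\overline{Y}$ — rather than merely on $m(C)$ — so that Clarke's proposition, stated for functions Lipschitz near a point of a Banach space, genuinely applies. This rests on the density of $m(X_0)$ in $\overline{Y}$ and on the identification $B(0,1) = \Int\cl m(C)$. Everything after that is a routine transfer of identities through the linear isometry $m$.
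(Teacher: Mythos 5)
Your proposal is correct and follows essentially the same route as the paper: both transfer Clarke's Proposition 2.1.1 for $\psi$ on the Banach space $\overline{Y}$ back to $\varphi^\circ$ through the linearity of $m$, the identity $\|m(\cdot)\|=\mu(\cdot)$, and Lemma \ref{equality_of_gdd}. The only difference is that you explicitly verify the preliminary fact that $\psi$ is $L$-Lipschitz on a neighbourhood of $m(x-x_0)$ in $\overline{Y}$, which the paper disposes of in the preamble to Section \ref{section_clarke}.
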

\begin{proof}
Let $x \in B_{\mu}(x_0,1)$, $\{v,w\} \subset X_0$, $\alpha > 0$ and let $\psi$ be the function defined in \eqref{eq:psi}.
Then
$$\psi^\circ(m(x-x_0),m(v)+m(w)) \leq \psi^\circ(m(x-x_0),m(v)) + \psi^\circ(m(x-x_0),m(w)),$$
where $m$ is defined in \eqref{eq:comdiagr}. Therefore, by Lemma \ref{equality_of_gdd},
$$ \varphi^\circ(x, v+w) \leq \varphi^\circ(x,v) + \varphi^\circ(x,w).$$
Similarly, since $\psi^\circ(m(x-x_0), \alpha v) = \alpha \psi^\circ(m(x-x_0),v)$, we see that $\varphi^\circ(x,\alpha v) = \alpha \varphi^\circ(x,v)$.
Next, since $\psi^\circ(m(x-x_0),\cdot)$ is Lipschitz,
$$ |\psi^\circ(m(x-x_0),m(v)) - \psi^\circ(m(x-x_0),m(w))| \leq L \mu(v-w);$$
hence, by Lemma \ref{equality_of_gdd}, we obtain that $\varphi^\circ(x,\cdot)$ is $\mu$-Lipschitz on $X_0$ with the constant $L>0$. The last statement follows from Lemma \ref{equality_of_gdd}.
\end{proof}
The next proposition describes the upper semi-continuity property of $\varphi^\circ(\cdot,\cdot)$.
\begin{proposition}\label{upsgdd}
Let $C$ be a convex subset of a real vector space such that $C$ is symmetric with respect to a point $x_0 \in C$. Let $\mu$ be the Minkowski functional of the set $C-x_0$ and let $\varphi : C \rightarrow \mathbb{R}$ be a $\mu$-Lipschitz function with the constant $L>0$. Let $X_0$ be the space defined in \eqref{eq:x_0} and suppose that it is endowed with a vector topology such that $\mu$ is continuous. Then the function $\varphi^\circ(\cdot,\cdot)$ is upper semi-continuous on $B_{\mu}(x_0,1) \times X_0$, where $B_{\mu}(x_0,1)$ is considered with the topology defined in \eqref{eq:topology}.
\end{proposition}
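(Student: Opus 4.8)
The plan is to deduce the proposition from the classical Banach-space statement (Clarke \cite[Proposition 2.1.1]{clarke_book}) applied to the function $\psi$ of \eqref{eq:psi}, together with Lemma~\ref{equality_of_gdd}. Restricted to $B(0,1)=\Int\cl m(C)$, the function $\psi$ is an ordinary $\|\cdot\|$-Lipschitz function of rank $L$ on an open subset of the Banach space $\overline{Y}$: for every $x\in B(0,1)$ a small norm-ball around $x$ lies in $B(0,1)$, so the difference quotients defining $\psi^\circ(x,v)$ involve only points of $B(0,1)$ and the definition is the genuine Clarke one. Hence $\psi^\circ(\cdot,\cdot)$ is upper semi-continuous on $B(0,1)\times\overline{Y}$. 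By Lemma~\ref{equality_of_gdd} we have $\varphi^\circ(x,v)=\psi^\circ(m(x-x_0),m(v))$ for all $x\in B_\mu(x_0,1)$ and $v\in X_0$, so it remains to transport upper semi-continuity along the map $\Phi(x,v):=(m(x-x_0),m(v))$ and to invoke the general principle that a composition of a continuous map with an upper semi-continuous real function is upper semi-continuous.

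Next I would verify that $\Phi$, regarded as a map from $B_\mu(x_0,1)\times X_0$ — with the topology \eqref{eq:topology} on the first factor and the given vector topology on the second — into $B(0,1)\times\overline{Y}$, is continuous. Since $m=i\circ\pi$ with $i$ a linear isometry (hence continuous) and $\pi$ the quotient map, and since $\|\pi(x)\|=\mu(x)$ by \eqref{eq:pi} with $\mu$ continuous on $X_0$, the preimage under $\pi$ of a norm-ball of $Y$ is the open set $\{u\in X_0:\mu(u)<\varepsilon\}$; thus $\pi$, and therefore $m\colon X_0\to\overline{Y}$, is linear and continuous. By the very definition \eqref{eq:topology}, the translation $x\mapsto x-x_0$ is a homeomorphism of $B_\mu(x_0,1)$ onto $\{u\in X_0:\mu(u)<1\}$, so $x\mapsto m(x-x_0)$ is continuous; together with continuity of $v\mapsto m(v)$ this makes $\Phi$ continuous as a product of continuous maps. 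One also notes $m(x-x_0)\in B(0,1)$ for $x\in B_\mu(x_0,1)$, since $\|m(x-x_0)\|=\mu(x-x_0)<1$, so $\Phi$ indeed has the stated codomain.

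Finally I would close the argument in the net formulation of upper semi-continuity adopted in the paper: given $(x,v)$ in $B_\mu(x_0,1)\times X_0$ and a net $(x_\alpha,v_\alpha)\to(x,v)$, continuity of $\Phi$ yields $\Phi(x_\alpha,v_\alpha)\to\Phi(x,v)$ in $B(0,1)\times\overline{Y}$, whence $\limsup_\alpha \varphi^\circ(x_\alpha,v_\alpha)=\limsup_\alpha\psi^\circ(\Phi(x_\alpha,v_\alpha))\le\psi^\circ(\Phi(x,v))=\varphi^\circ(x,v)$ by upper semi-continuity of $\psi^\circ$. I expect the only genuinely delicate point to be the bookkeeping around the two different topologies — confirming that $\Phi$ really is continuous, in particular the continuity of the quotient map $\pi$ for the prescribed topology on $X_0$ — while the Banach-space input and the composition principle for upper semi-continuous functions are routine.
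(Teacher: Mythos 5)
Your proposal is correct and follows essentially the same route as the paper: both reduce the claim to the upper semi-continuity of $\psi^\circ$ on $B(0,1)\times\overline{Y}$ from Clarke's Banach-space result, then transport it back through the continuous map $(x,v)\mapsto(m(x-x_0),m(v))$ using Lemma~\ref{equality_of_gdd} and the net formulation of upper semi-continuity. Your write-up is merely more explicit about why $\pi$ and $m$ are continuous for the prescribed topologies, which the paper states in a single line.
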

\begin{proof}
According to Clarke \cite[Proposition 2.2.1.]{clarke_book}, the function $\psi^\circ(\cdot,\cdot)$ is upper semi-continuous on $B(0,1)\times \overline{Y}$.
Let $\{(x_\alpha,v_\alpha)\}_{a \in A}$ be an arbitrary net in $B_{\mu}(x_0,1)\times X_0$ converging to $(x,v) \in B_{\mu}(x_0,1) \times X_0$, and let $m$ and $\pi$ be the maps defined in \eqref{eq:comdiagr}. Since $\mu$ is continuous, $\pi$ is also continuous; hence, $m$ is continuous and $(m(x_\alpha-x_0),m(v_\alpha)) \rightarrow (x,v)$. Therefore,
$$ \limsup_{\substack{x_\alpha\rightarrow x, \\ v_\alpha \rightarrow v}} \psi^\circ(m(x_\alpha-x_0),m(v_\alpha)) \leq \psi^\circ(m(x-x_0),m(v)).$$
By Lemma \ref{equality_of_gdd},
$$ \limsup_{\substack{x_\alpha\rightarrow x, \\ v_\alpha \rightarrow v}}\varphi^\circ(x_\alpha,v_\alpha) \leq \varphi^\circ(x,v),$$
i.e. $\varphi^\circ(\cdot,\cdot)$ is upper semi-continuous.
\end{proof}

\subsection{The elementary properties of $\partial^\mu_{C}$}\label{subsection_elprop}
Let $X_0$ be the space defined in \eqref{eq:x_0} and $\overline Y$ be the space defined in \eqref{eq:y_c}. Let $m$ be the map defined in \eqref{eq:comdiagr}.
Consider the map $M : (\overline{Y})^* \rightarrow X_0^\prime$ \text{defined by}
\begin{equation}\label{eq:M}M(\zeta) := \zeta \circ m, \ \ \zeta \in (\overline Y)^*.\end{equation}
In the following lemma we establish a few properties of the map $M$.
\begin{lemma}\label{propmaps}
Let $M$ be the map defined in \eqref{eq:M}. Then the map $M$ is linear and injective. If additionally the space $X_0$ defined in \eqref{eq:x_0} is supplied with a vector topology such that $\mu$ defined in \eqref{eq:muc} is continuous, then $M(\zeta) \in X_0^*$ for every $\zeta \in (\overline Y)^*$ and $M$ is continuous with respect to $\sigma(X_0^*, X_0)$ and $\sigma((\overline Y)^*, Y)$ topologies.
\end{lemma}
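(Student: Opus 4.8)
The plan is to unwind all the definitions and reduce each assertion to an elementary verification. First I would establish linearity of $M$: for $\zeta_1, \zeta_2 \in (\overline Y)^*$ and scalars $\lambda_1, \lambda_2$, the identity $M(\lambda_1 \zeta_1 + \lambda_2 \zeta_2) = (\lambda_1\zeta_1 + \lambda_2\zeta_2)\circ m = \lambda_1 (\zeta_1\circ m) + \lambda_2(\zeta_2 \circ m)$ is immediate from pointwise operations on functionals. For injectivity, suppose $M(\zeta) = \zeta \circ m = 0$ on $X_0$; since $m = i\circ\pi$ with $i$ a linear isometry onto a dense subspace of $\overline Y$ (recall $m(X_0)$ is dense in $\overline Y$ by construction of the completion), the functional $\zeta$ vanishes on the dense set $m(X_0)$, hence $\zeta = 0$ by continuity. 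So $\Ker M = \{0\}$ and $M$ is injective.

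For the topological part, assume $X_0$ carries a vector topology making $\mu$ continuous. I would first argue $M(\zeta) \in X_0^*$: the quotient map $\pi : X_0 \to Y$ is continuous because the quotient norm on $Y$ is $\|\pi(x)\| = \mu(x)$ and $\mu$ is continuous on $X_0$, so preimages of norm-balls are open; composing with the isometry $i$ keeps continuity, hence $m$ is continuous, and then $\zeta \circ m$ is continuous for every $\zeta \in (\overline Y)^*$, i.e. $M(\zeta) \in X_0^*$. Finally, for weak-* continuity of $M$ from $\sigma((\overline Y)^*, Y)$ to $\sigma(X_0^*, X_0)$: by the definition of these weak-* topologies it suffices to show that for each fixed $x \in X_0$ the map $\zeta \mapsto \langle M(\zeta), x\rangle = \zeta(m(x))$ is continuous on $(\overline Y)^*$ with its $\sigma((\overline Y)^*, Y)$ topology; but since $m(x) \in Y$ (identifying $Y$ with $i(Y) \subset \overline Y$), the functional $\zeta \mapsto \zeta(m(x))$ is by definition $\sigma((\overline Y)^*, Y)$-continuous. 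A small point to state carefully is the identification of $Y$ with its isometric image $i(Y)$ inside $\overline Y$, so that $m(x) = i(\pi(x))$ is legitimately regarded as an element of $Y$ when testing $\sigma((\overline Y)^*, Y)$-continuity.

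I do not anticipate any genuine obstacle here; the statement is a packaging lemma. The one place requiring mild care is the injectivity argument, where one must invoke density of $m(X_0)$ in $\overline Y$ together with continuity of $\zeta$ — this is where the completion $\overline Y$ (rather than $Y$ itself) matters, but since $(\overline Y)^*$ consists of continuous functionals the density suffices. The rest is bookkeeping: composition of continuous maps is continuous, and the defining subbasis of a weak-* topology makes evaluation-type maps continuous by fiat.
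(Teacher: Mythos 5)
Your proof is correct and follows essentially the same route as the paper: linearity is immediate, injectivity comes from density of $m(X_0)$ in $\overline Y$ plus continuity of $\zeta$, and the topological claims reduce to continuity of $m$ (via continuity of $\mu$) and testing weak-$*$ continuity against evaluation functionals. If anything, your handling of the weak-$*$ continuity is slightly more careful than the paper's, since you test against elements of $Y$ (matching the stated $\sigma((\overline Y)^*,Y)$ topology) whereas the paper's net argument tests against all of $\overline Y$.
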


\begin{proof}
The linearity is obvious. If $M(\zeta) = 0$, then $\left\langle \zeta, m(x)\right\rangle = 0$ for all $x \in X_0$, i.e. $\left\langle \zeta, y \right\rangle = 0$ for all $y \in m(X_0)$. Since $\zeta$ is continuous and $m(X_0)$ is a dense subspace of $\overline{Y}$, we see that $\zeta = 0$. Therefore, $M$ is injective. Next, suppose that $X_0$ is supplied with a vector topology such that $\mu$ is continuous. Then $m$ is continuous and therefore $\zeta \circ m \in X_0^*$ for any $\zeta \in (\overline Y)^*$. If $\{\zeta_\alpha\}_{\alpha \in A}$ is a net in $(\overline Y)^*$ such that for all $x \in \overline Y$
$$ \left\langle \zeta_\alpha, x \right\rangle \rightarrow 0, $$
then a fortiori for all $x \in X_0$
$$\left \langle \zeta_\alpha, m(x) \right\rangle \rightarrow 0, $$
i.e. $M(\zeta_\alpha) \rightarrow 0$ in $\sigma(X_0^*, X_0)$ topology. Thus $M$ is continuous.
\end{proof}

Let $C$ be a convex subset of a real vector space such that $C$ is symmetric with respect to a point $x_0 \in C$. Let $\mu$ be the Minkowski functional of the set $C-x_0$ and let $\varphi : C \rightarrow \mathbb{R}$ be a $\mu$-Lipschitz function. Consider the Clarke subdifferential of the function $\psi$ defined in \eqref{eq:psi}:
$$ \partial_{C}\psi(x) = \{ \zeta \in (\overline{Y})^* : \psi^\circ(x,v) \geq \left\langle \zeta,v\right\rangle \text{ for all } v \in \overline{Y}\},\ \ x \in B(0,1). $$
Applying the map $M$ defined in \eqref{eq:M} to $\partial_C\psi(x)$ and using Lemma \ref{equality_of_gdd}, we see that
$$M (\partial_{C}\psi(m(x-x_0))) = \{\zeta \in X_0^\prime : \varphi^\circ(x,v) \geq \left\langle \zeta, v \right\rangle \text{ for all } v \in X_0  \},\ \ x \in B_{\mu}(x_0,1); $$
therefore, it is natural to say that $M(\partial_{C}\psi(m(x-x_0)))$ is the Clarke subdifferential of the function $\varphi$ at the point $x$, and we denote
\begin{equation} \partial^\mu_{C}\varphi(x):  = M(\partial_{C}\psi(m(x-x_0))),\  \ x\in B_{\mu}(x_0,1).\label{eq:clarke_df} \end{equation}
Since the definition of $\varphi^\circ$ in \eqref{eq:gdd} depends on the Minkowski functional $\mu$, it is included in the formulae \eqref{eq:clarke_df}.
It follows from \eqref{eq:clarke_df} and Lemma \ref{equality_of_gdd} that
$$ \varphi^\circ(x,v) = \max\{\left\langle \zeta, v \right\rangle : \zeta \in \partial \varphi (x)\}.$$
The next proposition describes some elementary properties of $\partial^\mu_{C}\varphi(\cdot)$.
\begin{proposition}\label{subd_gen_properties}
Let $\partial^\mu_C\varphi(\cdot):B_{\mu}(x_0,1) \rightrightarrows X_0^\prime$ be the multivalued map defined in \eqref{eq:clarke_df}. Then $\partial^\mu_C\varphi(\cdot)$ has non-empty convex values. If additionally the space $X_0$ defined in \eqref{eq:x_0} is supplied with a vector topology such that $\mu$ is continuous, then $\partial^\mu_{C}\varphi(\cdot)$ takes compact values in the space $X_0^*$ with respect to $\sigma(X_0^*,X_0)$ topology.
\end{proposition}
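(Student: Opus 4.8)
The plan is to reduce the statement to the corresponding classical facts about the Clarke subdifferential $\partial_C\psi$ in the Banach space $\overline Y$, and then transport them through the map $M$ of \eqref{eq:M}. First I would record the Banach-space input. For every $x \in B_\mu(x_0,1)$ one has $\|m(x-x_0)\| = \mu(x-x_0) < 1$, so $m(x-x_0)$ lies in the open set $B(0,1) = \Int\cl m(C)$, on which $\psi$ (defined in \eqref{eq:psi}) is well-defined and $\|\cdot\|$-Lipschitz with the same constant $L$, as is immediate from \eqref{eq:psi}. Hence Clarke's classical result (Clarke \cite[Proposition 2.1.2]{clarke_book}) applies at $m(x-x_0)$ and yields that $\partial_C\psi(m(x-x_0))$ is a non-empty, convex, $\sigma((\overline Y)^*,\overline Y)$-compact subset of $(\overline Y)^*$, every element of which has norm at most $L$.

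The non-emptiness and convexity of $\partial^\mu_C\varphi(x)$ then follow at once. By Lemma \ref{propmaps} the map $M$ is linear and injective, so it carries the non-empty convex set $\partial_C\psi(m(x-x_0))$ onto the non-empty convex set $\partial^\mu_C\varphi(x) = M(\partial_C\psi(m(x-x_0)))$ of \eqref{eq:clarke_df}, which lies in $X_0^\prime$ by the very definition of $M$. No topological assumption is used here.

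For the compactness assertion, assume in addition that $X_0$ carries a vector topology for which $\mu$ is continuous. By Lemma \ref{propmaps}, $M$ then maps $(\overline Y)^*$ into $X_0^*$ and is continuous from the $\sigma((\overline Y)^*,Y)$ topology to the $\sigma(X_0^*,X_0)$ topology. The point to stress is that $\sigma((\overline Y)^*,Y)$ is coarser than the weak-$*$ topology $\sigma((\overline Y)^*,\overline Y)$, since under the isometry $i$ of \eqref{eq:comdiagr} the space $Y$ is identified with the (dense) subspace $m(X_0)$ of $\overline Y$; consequently $M$ is a fortiori continuous from $\sigma((\overline Y)^*,\overline Y)$ to $\sigma(X_0^*,X_0)$. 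Therefore $\partial^\mu_C\varphi(x)$, being the image under this continuous map of the weak-$*$ compact set $\partial_C\psi(m(x-x_0))$, is $\sigma(X_0^*,X_0)$-compact, which is the claim.

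The step I expect to require the most care is precisely this last piece of topological bookkeeping: matching the continuity of $M$ provided by Lemma \ref{propmaps} with the topology in which Clarke's theorem gives compactness, so that weak-$*$ compact sets are sent to $\sigma(X_0^*,X_0)$-compact sets. An alternative route, which sidesteps comparing the two topologies on all of $(\overline Y)^*$, is to use the norm bound: $\partial_C\psi(m(x-x_0))$ is contained in the ball of radius $L$ in $(\overline Y)^*$, and on norm-bounded subsets the topologies $\sigma((\overline Y)^*,Y)$ and $\sigma((\overline Y)^*,\overline Y)$ coincide because $m(X_0)$ is dense in $\overline Y$; either way the conclusion is identical. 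Everything else is a direct transcription of the Banach-space statement.
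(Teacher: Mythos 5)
Your proof is correct and follows essentially the same route as the paper: apply Clarke's Proposition 2.1.2 to $\psi$ in the Banach space $\overline Y$ and push the resulting non-empty convex weak-$*$ compact set forward through the linear injective map $M$ of Lemma \ref{propmaps}. Your extra care in reconciling the $\sigma((\overline Y)^*,Y)$ continuity stated in Lemma \ref{propmaps} with the $\sigma((\overline Y)^*,\overline Y)$ compactness supplied by Clarke is a welcome clarification of a point the paper glosses over, but it does not change the argument.
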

\begin{proof}
According to Clarke \cite[Proposition 2.1.2.]{clarke_book}, the Clarke subdifferential of the function $\psi$ defined in \eqref{eq:psi} has non-empty compact convex values with respect to $\sigma((\overline{Y})^*,\overline{Y})$ topology. Let $M$ be the map defined in \eqref{eq:M}. It follows from Lemma \ref{propmaps} and equality in \eqref{eq:clarke_df} that $\partial^\mu_{C}\varphi(\cdot)$ has non-empty convex values. Since every $\zeta \in \partial^\mu_{C}\varphi(x)$ is bounded above by $\mu$, we see that $\partial^\mu_{C}\varphi(x) \subset X_0^*$ whenever the space $X_0$ is supplied with a vector topology such that $\mu$ is continuous.
As well if $\mu$ is continuous, then, by Lemma \ref{propmaps}, the map $M$ is continuous; hence, the multivalued map $\partial^\mu_{C}\varphi(\cdot)$ has compact values in $X_0^*$ with respect to $\sigma(X_0^*,X_0)$ topology.
\end{proof}

\begin{proposition}\label{usc}
Let $\partial^\mu_{C} \varphi(\cdot) : B_{\mu} (x_0,1) \rightrightarrows X_0^\prime$ be the multivalued map defined in \eqref{eq:clarke_df} and let the space $X_0$ defined in \eqref{eq:x_0} be supplied with a vector topology such that $\mu$ is continuous. Next, let the set $B_{\mu}(x_0,1)$ be supplied with the topology defined in \eqref{eq:topology}. Then the multivalued map $\partial^\mu_C\varphi(\cdot)$ is upper semi-continuous with respect to the topology in $B_{\mu}(x_0,1)$ and \text{$\sigma(X_0^*, X_0)$ topology in $X_0^*$.}
\end{proposition}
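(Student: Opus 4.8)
The plan is to transfer the upper semi-continuity of the Clarke subdifferential from the Banach space $\overline{Y}$ to $X_0^*$ through the commutative diagram \eqref{eq:comdiagr} and the map $M$ of \eqref{eq:M}, exactly as the upper semi-continuity of $\varphi^\circ$ was obtained in Proposition \ref{upsgdd}. The ingredient from the Banach-space theory is that, since $\psi$ from \eqref{eq:psi} is Lipschitz with constant $L$ on the open set $B(0,1)=\Int\cl m(C)\subset\overline{Y}$, the multivalued map $\partial_{C}\psi(\cdot):B(0,1)\rightrightarrows(\overline{Y})^*$ is upper semi-continuous when $B(0,1)$ carries the norm topology and $(\overline{Y})^*$ carries the weak-$*$ topology $\sigma((\overline{Y})^*,\overline{Y})$ (Clarke \cite{clarke_book}). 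Should one wish to avoid quoting this, it is reproved in place from results already available: by Proposition \ref{subd_gen_properties} all values $\partial_{C}\psi(z)$ lie in the single $\sigma((\overline{Y})^*,\overline{Y})$-compact set $\{\zeta\in(\overline{Y})^*:\langle\zeta,y\rangle\le L\|y\|\text{ for all }y\in\overline{Y}\}$, while the joint upper semi-continuity of $\psi^\circ$ (cited in Proposition \ref{upsgdd}) together with the defining inequality $\langle\zeta,v\rangle\le\psi^\circ(z,v)$ exhibits the graph of $\partial_{C}\psi$ as an intersection of closed sets, hence closed; and a closed-graph multivalued map whose values lie in a fixed compact set is upper semi-continuous.

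Next I would record two continuity statements. First, $B_{\mu}(x_0,1)\ni x\mapsto m(x-x_0)\in\overline{Y}$ is continuous from the topology \eqref{eq:topology} to the norm topology and takes values in $B(0,1)$: the translation $x\mapsto x-x_0$ is a homeomorphism onto an open subset of $X_0$, the quotient map $\pi$ is continuous because $\mu$ is assumed continuous and $\|\pi(u)\|=\mu(u)$ by \eqref{eq:pi}, $i$ is an isometry, and $\mu(x-x_0)<1$ gives $\|m(x-x_0)\|<1$. Second, $M$ is continuous from $(\overline{Y})^*$ with $\sigma((\overline{Y})^*,\overline{Y})$ into $X_0^*$ with $\sigma(X_0^*,X_0)$: by Lemma \ref{propmaps} this holds already for the coarser topology $\sigma((\overline{Y})^*,Y)$, so it holds a fortiori for $\sigma((\overline{Y})^*,\overline{Y})$; equivalently, for each $x\in X_0$ the functional $\zeta\mapsto\langle M(\zeta),x\rangle=\langle\zeta,m(x)\rangle$ is $\sigma((\overline{Y})^*,\overline{Y})$-continuous since $m(x)\in\overline{Y}$.

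Finally I would assemble the pieces. By \eqref{eq:clarke_df} we have the factorization $\partial^\mu_{C}\varphi(\cdot)=M\circ\partial_{C}\psi(\cdot)\circ\bigl(m(\cdot-x_0)\bigr)$, a composition of the continuous single-valued map $x\mapsto m(x-x_0)$, the upper semi-continuous multivalued map $\partial_{C}\psi$, and the continuous single-valued map $M$. For a closed set $Q\subset X_0^*$ in the $\sigma(X_0^*,X_0)$ topology, the set $M^{-1}(Q)$ is closed in $(\overline{Y})^*$ (continuity of $M$), the set $\{z\in B(0,1):\partial_{C}\psi(z)\cap M^{-1}(Q)\ne\emptyset\}$ is closed (upper semi-continuity of $\partial_{C}\psi$), and its preimage under $x\mapsto m(x-x_0)$ is closed (continuity of that map); because $M$ is single-valued, this preimage is precisely $\{x\in B_{\mu}(x_0,1):\partial^\mu_{C}\varphi(x)\cap Q\ne\emptyset\}$. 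Hence the latter set is closed, which is the required upper semi-continuity.

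The only point needing care is the bookkeeping of the two weak-$*$ topologies on $(\overline{Y})^*$: the Banach-space upper semi-continuity naturally comes with $\sigma((\overline{Y})^*,\overline{Y})$, whereas Lemma \ref{propmaps} supplies continuity of $M$ only for $\sigma((\overline{Y})^*,Y)$. This is harmless because the former topology is finer than the latter ($Y$ sits densely in $\overline{Y}$), so continuity of $M$ — and with it the composition argument — passes through; if instead one runs the in-place closed-graph proof, one may simply work with whichever weak-$*$ topology is convenient, the values of $\partial_{C}\psi$ being uniformly norm-bounded.
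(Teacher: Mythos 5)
Your proof is correct and follows essentially the same route as the paper: pull a $\sigma(X_0^*,X_0)$-closed set $Q$ back through the continuous map $M$, use the Banach-space upper semi-continuity of $\partial_C\psi$ on $B(0,1)$, and then pull back through the continuous map $x\mapsto m(x-x_0)$, the key set-theoretic identity being that $M(A)\cap Q\neq\emptyset$ iff $A\cap M^{-1}(Q)\neq\emptyset$. Your explicit handling of the $\sigma((\overline{Y})^*,Y)$ versus $\sigma((\overline{Y})^*,\overline{Y})$ bookkeeping is a point the paper glosses over, and is resolved correctly.
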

\begin{proof}
It follows from Clarke \cite[Proposition 2.1.5.]{clarke_book} that the Clarke subdifferential $\partial_C\psi(\cdot)$ is u.s.c. on $B(0,1).$
Let the space $X_0$ be supplied with a vector topology such that $\mu$ is continuous and let $M$ be the map defined in \eqref{eq:M}.
Let $Q \subset X_0^*$ be a closed subset in $\sigma(X_0^*, X_0)$. By Lemma \ref{propmaps}, the map $M$ is continuous; hence, $M^{-1}(Q)$ is closed in $\sigma((\overline{Y})^*, \overline{Y})$. Since $\partial_{C} \psi(\cdot)$ is u.s.c., the set
$$ U := \{ y \in B(0,1) : \partial_{C} \psi (y) \cap M^{-1}(Q) \neq \emptyset\} $$
is closed. Since $m$ is continuous, we obtain that $m^{-1} (U)$ is closed; hence, by definition of the topology in $B_\mu(x_0,1)$, the set $m^{-1}(U) +x_0$ is closed in $B_\mu(x_0,1)$ and
$$ m^{-1}(U)+x_0 = \{ x \in B_\mu(x_0,1) : \partial_{C}\psi(m(x-x_0)) \cap M^{-1}(Q) \neq \emptyset\};$$
therefore, it is enough to show that
$$ \partial_{C}\psi(m(x-x_0)) \cap M^{-1}(Q) \neq \emptyset \ \Leftrightarrow  \ \partial^\mu_C\varphi(x) \cap Q \neq \emptyset.$$
Applying the map $M$, we see that
$$ M(\partial_{C}\psi(m(x-x_0)) \cap M^{-1}(Q)) \subset \partial^\mu_C\varphi(x) \cap M M^{-1}(Q) = \partial^\mu_C\varphi(x) \cap Q,$$
hence if $\partial_{C}\psi(m(x-x_0))\cap M^{-1}(Q) \neq \emptyset$, then $\partial^\mu_C\varphi(x) \cap Q \neq \emptyset$. On the other hand, let $\partial^\mu_C\varphi(x) \cap Q \neq \emptyset$. Since $\partial^\mu_C\varphi(x) = M(\partial_C(\psi(m(x-x_0)))$, we see that $Q$ intersects with the image of the map $M$, therefore
$$\emptyset \neq M^{-1}(\partial^\mu_C\varphi(x) \cap Q) = M^{-1}M(\partial_C\psi(m(x-x_0)))\cap M^{-1}(Q) \subset \partial_C\psi(m(x-x_0)) \cap M^{-1}(Q).$$
Thus $\partial^\mu_C\varphi(\cdot)$ is u.s.c.
\end{proof}
\begin{proposition}(Fermat's rule)\label{fermat}
Let $C$ be a convex set that is symmetric with respect to a point $x_0 \in C$, $\mu$ be the Minkowski functional of the set $C-x_0$ and let $\varphi : C \rightarrow \mathbb{R}$ be a $\mu$-Lipschitz function with the constant $L>0$. Suppose that there exists $\varepsilon > 0$ and $u \in C$ such that $\varepsilon(C-x_0) + u \subset C$ and the restriction of $\varphi$ to $\varepsilon(C-x_0) + u$ attains a minimum or a maximum at the point $u$. Then $\varphi^\circ(u,v)\geq 0$ for all $v \in X_0$ and $\partial^\mu_{C} \varphi(u) \ni 0$, where $X_0$ is defined in \eqref{eq:x_0}.
\end{proposition}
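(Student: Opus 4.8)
The plan is to first establish $\varphi^\circ(u,v)\ge 0$ for all $v\in X_0$ and then read off $0\in\partial^\mu_C\varphi(u)$ from the identity $\partial^\mu_C\varphi(u)=\{\zeta\in X_0'\colon \varphi^\circ(u,v)\ge\langle\zeta,v\rangle\text{ for all }v\in X_0\}$ recorded just above Proposition \ref{subd_gen_properties} (so that ``$\varphi^\circ(u,\cdot)\ge 0$'' is literally the statement ``$0\in\partial^\mu_C\varphi(u)$''). I would open by reducing the maximum case to the minimum case: if $\varphi$ attains a maximum at $u$ on $\varepsilon(C-x_0)+u$, then $-\varphi$ attains a minimum there, and by Proposition \ref{prop_alg}(iii) together with $X_0=-X_0$, the assertion ``$(-\varphi)^\circ(u,v)\ge 0$ for all $v$'' coincides with ``$\varphi^\circ(u,v)\ge 0$ for all $v$''; likewise the defining formula for the subdifferential gives $\partial^\mu_C(-\varphi)(u)=-\partial^\mu_C\varphi(u)$, so $0$ lies in one precisely when it lies in the other. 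Hence it suffices to treat the minimum case.

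Before that I would note that the hypotheses already place $u$ in the domain of $\varphi^\circ$: since $u\in C$ we have $u-x_0\in C-x_0\subset X_0$, and since $C-x_0$ is absorbing in $X_0$ (Lemma \ref{absorb}), the inclusion $\varepsilon(C-x_0)+u\subset C$ forces $u+\{z\in X_0\colon\mu(z)<\varepsilon\}\subset C$; testing this at a suitable positive multiple of $u-x_0$ (or observing $\mu(u-x_0)=0$ outright) yields $\mu(u-x_0)\le 1-\varepsilon/2<1$.

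For the minimum case, assume $u$ minimizes $\varphi$ on $\varepsilon(C-x_0)+u$ and fix $v\in X_0$. Because $C-x_0$ is absorbing in $X_0$ there is $\delta>0$ with $\delta v\in\varepsilon(C-x_0)$; then $tv\in\varepsilon(C-x_0)$ for every $t\in[0,\delta]$ by convexity (since $0\in\varepsilon(C-x_0)$), so $u+tv\in\varepsilon(C-x_0)+u\subset C$ and $\varphi(u+tv)\ge\varphi(u)$ on $[0,\delta]$. Consequently $\frac{\varphi(u+tv)-\varphi(u)}{t}\ge 0$ for $t\in(0,\delta]$, whence $\limsup_{t\to 0+}\frac{\varphi(u+tv)-\varphi(u)}{t}\ge 0$; and since the $\limsup$ defining $\varphi^\circ(u,v)$ in \eqref{eq:gdd} dominates its restriction to the admissible path $y\equiv u$, we obtain $\varphi^\circ(u,v)\ge 0$. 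As $v\in X_0$ was arbitrary and $\langle 0,v\rangle=0$, the identity quoted above gives $0\in\partial^\mu_C\varphi(u)$.

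I expect no real obstacle; the two points deserving care are the verification that $u\in B_\mu(x_0,1)$ (for which Lemma \ref{absorb} is exactly the tool) and the fact that the $\limsup$ defining $\varphi^\circ(u,v)$ ranges over base points $y$ with $\mu(y-u)\to 0$, not merely over $t\to 0+$ — harmless here because the constant path $y\equiv u$ is admissible, but it is the one place where one must not simply write $\varphi^\circ(u,v)=\limsup_{t\to 0+}\frac{\varphi(u+tv)-\varphi(u)}{t}$. A more intrinsic alternative, closer to the section's methodology, is to observe that $\varepsilon\, m(C-x_0)+m(u-x_0)$ is dense in the $\overline Y$-ball $B(m(u-x_0),\varepsilon)$ with $\psi\ge\psi(m(u-x_0))$ on it, so by continuity of $\psi$ the point $m(u-x_0)$ is a local minimizer of $\psi$; Clarke's Fermat rule \cite[Proposition 2.3.2]{clarke_book} then yields $0\in\partial_C\psi(m(u-x_0))$ and $\psi^\circ(m(u-x_0),\cdot)\ge 0$, and these transport to $0\in\partial^\mu_C\varphi(u)$ and $\varphi^\circ(u,\cdot)\ge 0$ via the linear map $M$ and Lemma \ref{equality_of_gdd}.
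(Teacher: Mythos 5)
Your main argument is correct but proceeds quite differently from the paper's. The paper transports the whole statement to the Banach completion $\overline{Y}$: it observes that $\psi$ restricted to $m(\varepsilon(C-x_0))+m(u-x_0)$ attains an extremum at $m(u-x_0)$, which is an interior point of that set, invokes Clarke's classical Fermat rule to get $\psi^\circ(m(u-x_0),\cdot)\ge 0$ and $0\in\partial_{C}\psi(m(u-x_0))$, and pulls these back through $M$ and Lemma \ref{equality_of_gdd} --- exactly the ``intrinsic alternative'' you sketch in your closing sentences. Your primary route instead stays in $X_0$ and is elementary: you reduce the maximum case to the minimum case via Proposition \ref{prop_alg}(iii) together with $X_0=-X_0$, then for a minimum you use Lemma \ref{absorb} to produce admissible increments $u+tv\in\varepsilon(C-x_0)+u\subset C$ for small $t>0$, note that the difference quotients along the constant base path $y\equiv u$ are nonnegative, and conclude $\varphi^\circ(u,v)\ge 0$ because the $\limsup$ in \eqref{eq:gdd} dominates its restriction to that admissible path; $0\in\partial^\mu_C\varphi(u)$ then follows from the characterization of $\partial^\mu_C\varphi$ recorded just before Proposition \ref{subd_gen_properties}. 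This buys a self-contained proof that bypasses the completion machinery altogether, and your preliminary estimate $\mu(u-x_0)\le 1-\varepsilon/2<1$ (ensuring $u$ lies in the domain $B_\mu(x_0,1)$ of $\varphi^\circ$ and $\partial^\mu_C\varphi$) makes explicit a point the paper only gestures at with the unproved assertion $m(u-x_0)\in\Int m(\varepsilon(C-x_0))$. The only cost is a small amount of extra case analysis (minimum versus maximum) that the citation of Clarke's result absorbs for free.
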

\begin{proof}
If $\varphi_{|\varepsilon(C-x_0) + u}$ attains a minimum or a maximum at $u$, then $\psi_{|m(\varepsilon(C-x_0)) + m(u-x_0)}$ defined in \eqref{eq:psi} attains a minimum or a maximum at $m(u-x_0)$, where $m$ is defined in \eqref{eq:comdiagr}. Note that $m(u-x_0) \in \Int m(\varepsilon(C-x_0))$, hence $\psi^\circ(m(u-x_0),m(v)) \geq 0$ for all $v \in X_0$ and $\partial_{C}\psi(m(u-x_0)) \ni 0$. By Lemma \ref{equality_of_gdd}, $\varphi^\circ(u,v) \geq 0$ for all $v \in X_0$ and since $\partial^\mu_C\varphi(u) = M(\partial_{C}\psi(m(u-x_0)))$, we see that $\partial^\mu_C\varphi(u) \ni 0$.
\end{proof}

\begin{theorem}(Lebourg mean-value theorem)\label{lebourg}
Let $C$ be a convex set that is symmetric with respect to a point $x_0 \in C$, $\mu$ be the Minkowski functional of the set $C-x_0$ and let $\varphi : C \rightarrow \mathbb{R}$ be a $\mu$-Lipschitz function with the constant $L>0$. If $\{y,x\} \subset B_{\mu}(x_0,1)$ are distinct points, then there exists a point $z$ in the open line segment between $x$ and $y$ such that
$$ \varphi(y) - \varphi(x) \in \{ \left\langle \zeta, y-x \right\rangle : \zeta \in \partial^\mu_C\varphi(z)\} .$$
\end{theorem}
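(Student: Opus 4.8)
The plan is to lift the statement to the Banach space $\overline Y$, apply the classical Lebourg mean-value theorem there to the function $\psi$ from \eqref{eq:psi}, and then transport the conclusion back through the map $M$ of \eqref{eq:M}. Set $\bar x := m(x-x_0)$ and $\bar y := m(y-x_0)$, where $m$ is the map from \eqref{eq:comdiagr}. Since $m=i\circ\pi$ with $i$ an isometry and $\|\pi(v)\|=\mu(v)$ by \eqref{eq:pi}, we have $\|\bar x\|=\mu(x-x_0)<1$ and $\|\bar y\|=\mu(y-x_0)<1$, so $\bar x,\bar y\in B(0,1)$. Moreover $\mu(x-x_0)<1$ forces $x\in C$ (and, by convexity, every point of the segment $[x,y]$ lies in $C$ as well), so taking constant sequences in \eqref{eq:psi} gives $\psi(\bar x)=\varphi(x)$ and $\psi(\bar y)=\varphi(y)$.

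First dispose of the degenerate case $\mu(x-y)=0$. Then $y-x\in\Ker\mu=\Ker m$, hence $\varphi(x)=\varphi(y)$ and $m(y-x)=0$. Pick any $z$ in the open segment between $x$ and $y$; it lies in $B_\mu(x_0,1)$ by convexity, and by Proposition \ref{subd_gen_properties} there is some $\zeta\in\partial^\mu_C\varphi(z)$. By \eqref{eq:clarke_df} and \eqref{eq:M} such a $\zeta$ has the form $\zeta=\bar\zeta\circ m$, so $\langle\zeta,y-x\rangle=\bar\zeta(m(y-x))=0=\varphi(y)-\varphi(x)$, and we are done. Assume now $\mu(x-y)>0$, i.e. $\bar x\neq\bar y$. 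The function $\psi$ is defined on $\cl m(C)\supseteq B(0,1)$ and inherits the norm-Lipschitz property with constant $L$ directly from the $\mu$-Lipschitz property of $\varphi$ and the definition \eqref{eq:psi}; since $B(0,1)$ is an open convex subset of $\overline Y$ containing the segment $[\bar x,\bar y]$, the Lebourg mean-value theorem in the Banach space $\overline Y$ (Clarke \cite[Theorem 2.3.7]{clarke_book}) provides a point $\bar z$ in the open segment between $\bar x$ and $\bar y$ and a functional $\bar\zeta\in\partial_C\psi(\bar z)$ with $\psi(\bar y)-\psi(\bar x)=\langle\bar\zeta,\bar y-\bar x\rangle$.

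It remains to transport this back to $X_0$. Write $\bar z=(1-\lambda)\bar x+\lambda\bar y$ with $\lambda\in(0,1)$ and put $z:=(1-\lambda)x+\lambda y$. Then $z$ lies in the open segment between $x$ and $y$ (since $x\neq y$), $z\in B_\mu(x_0,1)$ by convexity, and linearity of $m$ gives $m(z-x_0)=(1-\lambda)\bar x+\lambda\bar y=\bar z$. Set $\zeta:=M(\bar\zeta)$; by \eqref{eq:clarke_df} and $\bar\zeta\in\partial_C\psi(m(z-x_0))$ we obtain $\zeta\in\partial^\mu_C\varphi(z)$, and using \eqref{eq:M} together with the linearity of $m$,
\[
\langle\zeta,y-x\rangle=\bar\zeta\bigl(m(y-x)\bigr)=\bar\zeta(\bar y-\bar x)=\psi(\bar y)-\psi(\bar x)=\varphi(y)-\varphi(x),
\]
which is exactly the asserted conclusion.

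The argument is essentially bookkeeping, and the one place that needs genuine care is that $m$ need not be injective when $\mu$ is only a seminorm: this is why the case $\mu(x-y)=0$ must be peeled off, and why one should verify that the interior point $\bar z$ produced in $\overline Y$ corresponds, via the same convex coefficient $\lambda$, to a genuine interior point $z$ of $[x,y]$ in $X_0$ — both points being immediate from the linearity of $m$ and the non-emptiness of $\partial^\mu_C\varphi$.
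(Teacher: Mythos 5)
Your proposal is correct and follows essentially the same route as the paper: split off the degenerate case $y-x\in\Ker\mu$ (where the paper deduces $\langle\zeta,y-x\rangle=0$ from $\Ker\zeta\supset\Ker\mu$, and you deduce it from $\zeta=\bar\zeta\circ m$ with $m(y-x)=0$ — the same fact), and in the main case apply Clarke's Lebourg theorem to $\psi$ in $\overline Y$ and pull back through $M$. Your explicit construction of $z$ from the convex coefficient $\lambda$ is a welcome detail the paper leaves implicit.
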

\begin{proof}
Let $\{x,y\} \subset B_\mu(x_0,1)$. Let $m$ be the map defined in \eqref{eq:comdiagr} and let $\psi$ be the function defined in \eqref{eq:psi}. Consider two cases: \ref{case1}) $y-x \notin \Ker \mu$ and \ref{case2}) $y-x \in \Ker \mu$.
\begin{enumerate}[a)]
\item\label{case1} If $y - x \notin \Ker \mu$ then $m(x-x_0) \neq m(y-x_0)$; hence, according to the classic Lebourg mean value theorem (see, e.g., Clarke \cite[Theorem 2.3.7.]{clarke_book}) applied to the function $\psi$, there exists a point $m(z-x_0)$ in the open line segment between $m(x-x_0)$ and $m(y-x_0)$ such that
$$ \psi(m(y-x_0)) - \psi(m(x-x_0)) \in\{ \left\langle \zeta, m(y-x) \right\rangle : \zeta \in \partial_{C}\psi(m(z-x_0))\}.$$
Since $\left\langle \zeta, m(y-x) \right\rangle  = \left\langle M(\zeta), y-x \right\rangle$, we see that
$$ \varphi(y) - \varphi(x)\in \{ \left\langle \zeta, y-x \right\rangle : \zeta \in \partial^\mu_C\varphi(z)\}.$$

\item\label{case2} Note that if $\zeta \in \partial^\mu_{C}\varphi(x)$ for $x \in B_{\mu}(x_0,1)$, then $\mu(v) \geq \left\langle \zeta, v \right\rangle$ for all $v \in X_0$ and hence $\Ker \zeta \supset \Ker \mu$. Thus if $y-x \in \Ker \mu$, we obtain that
$$ \varphi(y) - \varphi(x) = 0 \text{ and } \varphi(y) - \varphi(x) \in\{ \left\langle \zeta, y-x \right\rangle : \zeta \in \partial^\mu_C\varphi(z)\},$$
where $z$ is an arbitrary point in the line segment between $x$ and $y$.\qedhere
\end{enumerate}
\end{proof}
Theorem \ref{lebourg} is illustrated in Proposition \ref{ex_lebourg}.

\subsection{Calculus of $\partial^\mu_C$}\label{subsection_calculus}
In this subsection we consider chain, sum and multiple rules for $\partial^\mu_C$.
Let $\varphi : C \rightarrow \mathbb{R}$ be a function defined on a convex set $C$ that is symmetric with respect to a point $x_0 \in C$. Let $\mu$ be the Minkowski functional of the set $C-x_0$ and put $X_0:=\Sp(C-x_0)$ 
Following Clarke \cite{clarke_grad}, \cite{clarke_book}, we say that the function $\varphi$ is \textit{regular} at the point $x \in B_{\mu}(x_0,1)$ if the following statements hold:
\begin{enumerate}
\item The directional derivative $\varphi^\prime(x,v) := \lim\limits_{t \rightarrow 0+} {(\varphi(x+tv) - \varphi(x))}{t^{-1}}$ exists for all $v \in X_0$;
\item The equality $\varphi^\prime(x,v) = \varphi^\circ(x,v)$ holds for all $v \in X_0$.
\end{enumerate}
\begin{theorem}(Chain rule II)\label{prop_chain_rule2}
Let $C$ be a convex set that is symmetric with respect to a point $x_0 \in C$, $h:C\rightarrow \mathbb{R}^n$ be a function such that each component $h_i$ is $\mu$-Lipschitz on $C$ and let $g:\mathbb{R}^n\rightarrow\mathbb{R}$ be a Lipschitz function.
Denote $\varphi: = g\circ h$ and let $X_0:=\Sp(C-x_0)$ be supplied with a vector topology such that $\mu$ is continuous. Then for all $x \in B_{\mu}(x_0,1)$ the next inclusion holds:
\begin{equation} \partial^\mu_C\varphi(x) \subset \overline{\co} \left\{\sum_{i=1}^n \alpha_i \zeta_i : \zeta_i \in \partial^\mu_Ch_i(x), \, \alpha \in \partial_Cg(h(x))\right\},\label{eq:chainrule2}\end{equation}
where $\overline{\co}$ is a closed convex hull operator considered in the topology $\sigma(X_0^*,X_0)$.
\end{theorem}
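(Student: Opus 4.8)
The plan is to reduce the statement to the classical Clarke Chain Rule II in the Banach space $\overline{Y}$ and to transport the conclusion back along the map $M$ of \eqref{eq:M}, exactly in the spirit of the other results of this section. First I would fix $x \in B_\mu(x_0,1)$, set $y_0 := m(x-x_0)$ (which lies in the open set $B(0,1) \subset \overline{Y}$ because $\mu(x-x_0) < 1$), and for each $i \in \{1,\dots,n\}$ introduce $\psi_i : \cl m(C) \to \mathbb{R}$, the function associated with the $\mu$-Lipschitz component $h_i$ by formula \eqref{eq:psi}. Each $\psi_i$ is Lipschitz for the norm of $\overline{Y}$ on $B(0,1)$, and taking a constant sequence in \eqref{eq:psi} gives $\psi_i(m(z-x_0)) = h_i(z)$ for $z \in C$. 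Putting $\tilde h := (\psi_1,\dots,\psi_n) : \cl m(C) \to \mathbb{R}^n$, this map is continuous, and $\tilde h(y_0) = h(x)$.

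The first real step is the identity $\psi = g \circ \tilde h$ on $\cl m(C)$, where $\psi$ is the function associated with $\varphi = g\circ h$ by \eqref{eq:psi}. Indeed, given $y \in \cl m(C)$ and $\{z_k\}\subset C$ with $m(z_k - x_0) \to y$, one computes $\psi(y) = \lim_k g(h_1(z_k),\dots,h_n(z_k)) = g(\lim_k h_1(z_k),\dots,\lim_k h_n(z_k)) = g(\tilde h(y))$, the middle equality using continuity of $g$ together with the definition of each $\psi_i$. In particular $\psi = g\circ\tilde h$ on $B(0,1)$.

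Next I would apply Clarke's Chain Rule II (see Clarke \cite[Theorem 2.3.10]{clarke_book}) to $\psi = g\circ\tilde h$ at $y_0$; its hypotheses hold since each $\psi_i$ is Lipschitz near $y_0$ and $g$ is Lipschitz. This yields
\begin{equation*}
\partial_C\psi(y_0) \subset \overline{\co}\Big\{\textstyle\sum_{i=1}^n \alpha_i \eta_i : \eta_i \in \partial_C\psi_i(y_0),\ \alpha \in \partial_C g(\tilde h(y_0))\Big\},
\end{equation*}
the closed convex hull taken in $\sigma((\overline{Y})^*,\overline{Y})$. By the previous step $\partial_C g(\tilde h(y_0)) = \partial_C g(h(x))$, and by \eqref{eq:clarke_df} applied to $h_i$ and to $\varphi$ we have $M(\partial_C\psi_i(y_0)) = \partial^\mu_C h_i(x)$ and $M(\partial_C\psi(y_0)) = \partial^\mu_C\varphi(x)$. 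Pushing the displayed inclusion forward by the linear injective map $M$ and using that $M$ commutes with finite convex combinations and with the operations $\sum_i\alpha_i(\cdot)$, the generating set is sent onto $\{\sum_i\alpha_i\zeta_i : \zeta_i \in \partial^\mu_C h_i(x),\ \alpha \in \partial_C g(h(x))\}$, and $M(\overline{\co}\{\cdot\}) \subset \overline{\co}\{M(\cdot)\}$ in $\sigma(X_0^*,X_0)$ gives \eqref{eq:chainrule2}.

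The routine work is the identity $\psi = g\circ\tilde h$ and the bookkeeping with $M$; the one delicate point, which I expect to be the main obstacle to state cleanly, is the passage $M(\overline{\co}\{\cdot\}) \subset \overline{\co}\{M(\cdot)\}$: Lemma \ref{propmaps} only provides continuity of $M$ for the coarser topology $\sigma((\overline{Y})^*,Y)$ on the source, so this must be made to rest on the fact that the sets in Clarke's inclusion are bounded in $(\overline{Y})^*$ (each $\partial_C\psi_i(y_0)$ is bounded by the Lipschitz constant of $\psi_i$, and $\partial_C g(h(x))$ is bounded in $\mathbb{R}^n$), hence their weak$^*$-closures are weak$^*$-compact and on them the topologies $\sigma((\overline{Y})^*,\overline{Y})$ and $\sigma((\overline{Y})^*,Y)$ coincide, so that $M$ restricted to those sets is weak$^*$-to-$\sigma(X_0^*,X_0)$ continuous and maps the weak$^*$-compact convex hull onto a $\sigma(X_0^*,X_0)$-compact convex set containing the image of the generators.
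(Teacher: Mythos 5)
Your proof follows the paper's argument essentially verbatim: lift the components to $\hat h=(\psi_1,\dots,\psi_n)$ on $\overline{Y}$, verify $\psi=g\circ\hat h$, apply Clarke's chain rule in the Banach space $\overline{Y}$, and push the resulting inclusion forward by $M$. The only divergence is your ``delicate point'': since $\sigma((\overline{Y})^*,Y)$ is \emph{coarser} than $\sigma((\overline{Y})^*,\overline{Y})$, the continuity of $M$ supplied by Lemma \ref{propmaps} already holds for the finer topology in which Clarke's closed convex hull is taken, so $M(\overline{\co}\,A)\subset\overline{\co}\,M(A)$ follows from continuity plus linearity alone (as in the footnote to Corollary \ref{cor_chain_rule1}), and your boundedness/weak$^*$-compactness detour, while correct, is unnecessary for the inclusion.
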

\begin{proof}
Consider the following function
$$\hat{h} (x) := \lim_{n\rightarrow \infty}h(y_n), \ m(y_n-x_0)\rightarrow x, \{y_n\}_{n=1,2,\ldots}\subset C,$$
where $m$ is defined in \eqref{eq:comdiagr}.
The function $\hat{h}$ is well-defined since each component $h_i$ is $\mu$-Lipschitz. Then for every $x \in B_\mu(x_0,1)$ the function $\psi$ defined in \eqref{eq:psi} satisfies the following equality: 
$$\psi(x) = \lim\limits_{n\rightarrow \infty}g\circ h(y_n) = g(\lim\limits_{n\rightarrow\infty}h(y_n)) = g\circ \hat{h}(x),$$ where $\{y_n\}_{n=1,2,\ldots}$ is an arbitrary sequence from $C$ such that $m(y_n-x_0) \rightarrow x$.
Denote
\begin{equation}
 A_x:= \left\{\sum_{i=1}^n \alpha_i \zeta_i : \zeta_i \in \partial_C\hat{h}_i(m(x-x_0)), \, \alpha \in \partial_Cg(\hat{h}(m(x-x_0)))\right\}.\label{eq:a_x}
\end{equation}
Using results from a Banach space (see Clarke \cite[Theorem 2.3.9.]{clarke_book}), we obtain that \text{$ \partial_C\psi(m(x-x_0)) \subset \overline{co} A_x $}, hence 
$$\partial^\mu_C\varphi(x) = M(\partial_C\psi(m(x-x_0))) \subset M\overline{co}A_x \subset \overline{co}M(A_x).$$
Since $M(\partial_C\hat{h}_i(m(x-x_0)) = \partial^\mu_Ch_i(x)$ and $\hat{h}(m(x-x_0)) = h(x)$, we obtain that
$$ \partial^\mu_C\varphi(x) \subset \overline{\co} \left\{\sum_{i=1}^n \alpha_i \zeta_i : \zeta_i \in \partial^\mu_Ch_i(x), \, \alpha \in \partial_Cg(h(x))\right\}.\qedhere$$
\end{proof}
In Example \ref{ex_chain_rule2} we provide a use of Theorem \ref{prop_chain_rule2}.
\begin{corollary}\label{cor_chain_rule1}
Under the assumptions of Theorem \ref{prop_chain_rule2}, the equality holds in \eqref{eq:chainrule2} at the point $x \in B_{\mu}(x_0,1)$ if additionally the following assumptions hold:
\begin{enumerate}[(i)]
\item The function $g$ is regular at the point $h(x)$ and every function $h_i$ is regular at the point $x$;\label{cr2as1}
\item Every element $\alpha \in \partial_Cg(h(x))$ is non-negative;\label{cr2as2}
\item The space $X_0$ is endowed with a complete Hausdorff locally convex topology such that $\mu$ is continuous.\label{cr2as3}
\end{enumerate}
\end{corollary}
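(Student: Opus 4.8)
The plan is to repeat the translation scheme of the proof of Theorem~\ref{prop_chain_rule2}, the only change being that we now invoke the \emph{equality} version of Clarke's chain rule in the Banach space $\overline{Y}$ (the corollary to Clarke~\cite[Theorem~2.3.9]{clarke_book} asserting equality under regularity of the outer function and of the components, together with a sign condition on $\partial g$). As before, put $\hat h(x):=\lim_{n\to\infty}h(y_n)$ for $m(y_n-x_0)\to x$, which is well defined since each $h_i$ is $\mu$-Lipschitz, so that $\psi=g\circ\hat h$ on $B(0,1)$, and let $A_x$ be the set of~\eqref{eq:a_x}. It then suffices to (a) check that the hypotheses of the equality chain rule hold for $g\circ\hat h$ at the point $m(x-x_0)$; (b) conclude $\partial_C\psi(m(x-x_0))=\overline{\co}\,A_x$; and (c) transport this equality through the map $M$ of~\eqref{eq:M}.

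For step (a): the sign condition is exactly hypothesis~(ii), because $\partial_Cg(\hat h(m(x-x_0)))=\partial_Cg(h(x))$; regularity of $g$ at $\hat h(m(x-x_0))=h(x)$ is hypothesis~(i); and it only remains to show that each $\hat h_i$ is regular at $m(x-x_0)$. For directions $w=m(v)$ with $v\in X_0$ this is immediate: for small $t>0$ one has $x+tv\in B_{\mu}(x_0,1)\subset C$ and $\hat h_i(m(x-x_0)+tm(v))=h_i(x+tv)$, whence $\hat h_i'(m(x-x_0),m(v))=h_i'(x,v)$ exists by hypothesis~(i) and, by the analogue of Lemma~\ref{equality_of_gdd} for $h_i$, equals $h_i^\circ(x,v)=\hat h_i^\circ(m(x-x_0),m(v))$. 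For an arbitrary $w\in\overline{Y}$, choose $w_k\to w$ with $w_k\in m(X_0)$; since $\hat h_i^\circ(m(x-x_0),\cdot)$ is Lipschitz and the difference quotients $w\mapsto t^{-1}\bigl(\hat h_i(m(x-x_0)+tw)-\hat h_i(m(x-x_0))\bigr)$ are Lipschitz in $w$ with a constant independent of $t$, a $3\varepsilon$-argument shows that $\hat h_i'(m(x-x_0),w)$ exists and equals $\lim_k\hat h_i^\circ(m(x-x_0),w_k)=\hat h_i^\circ(m(x-x_0),w)$. Hence $\hat h_i$ is regular at $m(x-x_0)$.

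For steps (b) and (c): by (a) the equality chain rule gives $\partial_C\psi(m(x-x_0))=\overline{\co}\,A_x$, with the closed convex hull taken in $\sigma((\overline{Y})^*,\overline{Y})$; this set is moreover weak-$*$ compact (Clarke~\cite[Proposition~2.1.2]{clarke_book}), hence $\sigma((\overline{Y})^*,Y)$-compact. Applying $M$ and using~\eqref{eq:clarke_df} we obtain $\partial^\mu_C\varphi(x)=M(\overline{\co}\,A_x)$. By hypothesis~(iii) the functional $\mu$ is continuous, so by Lemma~\ref{propmaps} the map $M$ is linear and continuous from $\sigma((\overline{Y})^*,Y)$ into $\sigma(X_0^*,X_0)$; therefore $M(\overline{\co}\,A_x)$ is a convex subset of $X_0^*$ that is $\sigma(X_0^*,X_0)$-compact, hence closed, and it contains $M(A_x)$, so $\overline{\co}\,M(A_x)\subset M(\overline{\co}\,A_x)$. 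Together with the reverse inclusion $M(\overline{\co}\,A_x)\subset\overline{\co}\,M(A_x)$ already established in the proof of Theorem~\ref{prop_chain_rule2}, this yields $\partial^\mu_C\varphi(x)=\overline{\co}\,M(A_x)$, and substituting $M(\partial_C\hat h_i(m(x-x_0)))=\partial^\mu_Ch_i(x)$ and $\hat h(m(x-x_0))=h(x)$ turns the right-hand side into the closed convex hull in~\eqref{eq:chainrule2}.

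The step I expect to be the main obstacle is the transfer of regularity in (a): the one-sided directional derivative, unlike the generalized directional derivative $\hat h_i^\circ(m(x-x_0),\cdot)$, is not automatically continuous in the direction, so the identity $\hat h_i'=\hat h_i^\circ$ available only on the dense subspace $m(X_0)$ must be upgraded to all of $\overline{Y}$ by hand, which is exactly where the uniform-in-$t$ Lipschitz bound on the difference quotients is used. Once regularity has been transferred to $\hat h$, the remainder is a routine push-through of Clarke's theorem along $M$, parallel to the proof of Theorem~\ref{prop_chain_rule2}, with hypothesis~(iii) supplying the continuity of $\mu$ (hence of $M$) and the Hausdorff property needed to conclude that the weak-$*$ compact image $M(\overline{\co}\,A_x)$ is closed.
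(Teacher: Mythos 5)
Your proof is correct and follows essentially the same route as the paper: transfer to the Banach space $\overline{Y}$, apply the equality form of Clarke's chain rule there, and push the result back through $M$, using compactness of $M(\overline{\co}A_x)$ in the Hausdorff topology $\sigma(X_0^*,X_0)$ to identify it with $\overline{\co}M(A_x)$. The only differences are in the details: you justify the regularity of the $\hat h_i$ at $m(x-x_0)$ carefully (including the extension from the dense subspace $m(X_0)$ to all of $\overline{Y}$, which the paper merely asserts), and you obtain the weak-$*$ compactness of $\overline{\co}A_x$ from Clarke's Proposition 2.1.2 applied to $\partial_C\psi(m(x-x_0))$ rather than from the remark in Edwards invoked via hypothesis (iii).
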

\begin{proof}
Let $A_x$ be the set defined in $\eqref{eq:a_x}$, $M$ be the map defined in \eqref{eq:M} and $\overline{Y}$ be the space defined in \eqref{eq:y_c}. 
The set $A_x$ is compact in $\sigma{((\overline{Y})^*, \overline{Y})}$, and it follows from assumption \eqref{cr2as3} that the set $\overline{\co}(A_x)$ is compact in $\sigma{((\overline{Y})^*, \overline{Y})}$ (see the remark in Edwards \cite[p. 231]{edwards}). Therefore, the set $M(\overline{\co}{A_x})$ is compact and convex in $\sigma{(X_0^*, X_0)}$, thus $M(\overline \co A_x) = \overline \co M(A_x)$.\footnote{Here is an extended explanation. It is known from the general topology that the map $M$ is continuous if and only if for any set $A$ the inclusion $M(\cl A) \subset \cl M(A)$ holds. Since the map $M$ is linear, we see that $M(\co A) = \co M (A)$; therefore, for any set $A$ we obtain that $M(\cl \co A) \subset \cl M(\co A) = \cl \co M(A)$, i.e. $M(\overline{\co}A) \subset \overline{\co}M(A)$. Since $\overline\co A \supset A$, we see that $M(\overline \co A) \supset M(A)$, hence $\overline\co M(\overline \co A) \supset \overline \co M(A)$. If the set $M(\overline \co A)$ is compact, then the outside operator $\overline \co$ can be omitted. Here comes out the remark from Edwards: in a complete Hausdorff locally convex topology the operator $\overline \co$ preserves a set to be compact, i.e. if $A$ is compact, then $\overline\co A$ is compact and convex, and hence $M(\overline \co A)$ is compact and convex. }
Note that the functions $\hat h_i$ are regular at the point $m(x-x_0)$ since the functions $h_i$ are regular at the point $x$. According to Clarke \cite[Theorem 2.3.9.]{clarke_book}, the next equality holds:
$$\partial_C\psi(m(x)) = \overline{\co}A_x;$$
thus, applying $M$ to both sides, we obtain that
$$\partial^\mu_C\varphi(x) = M(\overline \co A_x) = \overline \co M(A_x) = \overline \co \left\{\sum_{i=1}^n \alpha_i \zeta_i : \zeta_i \in \partial^\mu_Ch_i(x), \, \alpha \in \partial_Cg(h(x))\right\}.\qedhere$$
\end{proof}

\begin{theorem}(Sum rule) Let $Q$ and $C$ be convex subsets of a real vector space such that $Q \cap C\neq \emptyset$, $\Sp(C-x_0) = \Sp(Q-x_0)$ and such that both $Q$ and $C$ are symmetric with respect to a point $x_0 \in Q\cap C$. Let $\mu$ be the Minkowski functional of the set $C-x_0$, $\nu$ be the Minkowski functional of the set $Q-x_0$. Next, let $\varphi:C\rightarrow \mathbb{R}$ be a $\mu$-Lipschitz function and $f:Q\rightarrow\mathbb{R}$ be a $\nu$-Lipschitz function. Then for all $x \in Q \cap C$ the next inclusion holds: \label{sumrule}
\begin{equation}
\partial^{\mu+\nu}_{C}(\varphi+f)(x) \subset \partial^\mu_C\varphi(x) + \partial^\nu_{C}f(x).\label{eq:sum_incl}
\end{equation}
\end{theorem}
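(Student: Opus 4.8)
The plan is to transport the Clarke sum rule from a Banach space, in the same way that the other results of this section are transported through maps of the types \eqref{eq:comdiagr} and \eqref{eq:M}, but now juggling three Minkowski functionals at once. Put $\rho := \mu + \nu$ and $X_0 := \Sp(C-x_0) = \Sp(Q-x_0)$. By Lemma~\ref{absorb}, $\mu$ and $\nu$ are finite seminorms on $X_0$, hence so is $\rho$, and $\rho$ is the Minkowski functional of the convex set $\{u \in X_0 : \rho(u) \le 1\} + x_0$, which is symmetric about $x_0$; this is the set implicit in the subscript of $\partial^{\mu+\nu}_C$. Attach to $\mu$, $\nu$ and $\rho$ the quotient-completion data of \eqref{eq:x_0}--\eqref{eq:comdiagr}, obtaining maps $m_\mu, m_\nu, m_\rho$ into Banach spaces $\overline{Y}_\mu, \overline{Y}_\nu, \overline{Y}_\rho$. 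Since $\mu \le \rho$ and $\Ker\rho \subset \Ker\mu$, the canonical map $X_0/\Ker\rho \to X_0/\Ker\mu$ is a well-defined norm-nonexpansive linear map; it extends uniquely to a bounded linear operator $T_\mu : \overline{Y}_\rho \to \overline{Y}_\mu$ satisfying $m_\mu = T_\mu \circ m_\rho$, and symmetrically one gets $T_\nu : \overline{Y}_\rho \to \overline{Y}_\nu$ with $m_\nu = T_\nu \circ m_\rho$. Write $M_\rho(\eta) := \eta \circ m_\rho$ as in \eqref{eq:M}.

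First I would identify the extended functions. As $\varphi$ is $\mu$-Lipschitz and $f$ is $\nu$-Lipschitz, $\varphi + f$ is $\rho$-Lipschitz on $Q \cap C$, so \eqref{eq:psi} yields extensions $\psi$, $g$, $\chi$ of $\varphi$, $f$, $\varphi + f$ with respect to $\mu$, $\nu$, $\rho$, defined on the closed unit balls of $\overline{Y}_\mu, \overline{Y}_\nu, \overline{Y}_\rho$ respectively. Evaluating on the dense subspace $m_\rho(X_0)$ and using $m_\mu = T_\mu \circ m_\rho$, $m_\nu = T_\nu \circ m_\rho$, continuity of $\psi, g$ and density, one checks
$$\chi(z) = \psi(T_\mu z) + g(T_\nu z), \qquad z \in B(0,1) \subset \overline{Y}_\rho.$$
Here $x$ ranges over $B_{\mu+\nu}(x_0,1)$ — the effective domain of $\partial^{\mu+\nu}_C(\varphi+f)$, which by star-shapedness of $C-x_0$ and $Q-x_0$ is contained in $Q \cap C$ while also forcing $\mu(x-x_0) < 1$ and $\nu(x-x_0) < 1$, so that $\partial^\mu_C\varphi(x)$ and $\partial^\nu_C f(x)$ are defined — hence $z_0 := m_\rho(x-x_0) \in B(0,1)$ and the displayed identity holds on a neighbourhood of $z_0$. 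Both summands $\psi \circ T_\mu$ and $g \circ T_\nu$ are Lipschitz near $z_0$, being compositions of Lipschitz functions with bounded operators.

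Now the Banach-space step. Since $\chi = \psi\circ T_\mu + g\circ T_\nu$ near $z_0$, the Clarke sum rule (Clarke~\cite[Proposition~2.3.3]{clarke_book}) in $\overline{Y}_\rho$ gives
$$\partial_C\chi(z_0) \subset \partial_C(\psi\circ T_\mu)(z_0) + \partial_C(g\circ T_\nu)(z_0).$$
Applying the linear map $M_\rho$ and recalling $\partial^{\mu+\nu}_C(\varphi+f)(x) = M_\rho(\partial_C\chi(z_0))$ from \eqref{eq:clarke_df}, it remains to check $M_\rho(\partial_C(\psi\circ T_\mu)(z_0)) \subset \partial^\mu_C\varphi(x)$ and the symmetric inclusion for the $\nu$-term. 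For $\eta \in \partial_C(\psi\circ T_\mu)(z_0)$ and $v \in X_0$, the elementary inequality $(\psi\circ T_\mu)^\circ(z_0,w) \le \psi^\circ(T_\mu z_0, T_\mu w)$ for generalized directional derivatives, together with $T_\mu z_0 = m_\mu(x-x_0)$ and $T_\mu m_\rho(v) = m_\mu(v)$, gives
$$\langle M_\rho(\eta), v\rangle = \langle \eta, m_\rho(v)\rangle \le (\psi\circ T_\mu)^\circ(z_0, m_\rho(v)) \le \psi^\circ(m_\mu(x-x_0), m_\mu(v)) = \varphi^\circ(x,v),$$
the last equality by Lemma~\ref{equality_of_gdd}. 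Thus $M_\rho(\eta) \in \{\zeta \in X_0^\prime : \varphi^\circ(x,v) \ge \langle\zeta,v\rangle \text{ for all } v \in X_0\}$, which is $\partial^\mu_C\varphi(x)$ by \eqref{eq:clarke_df} and the $\varphi^\circ$-characterization of $\partial^\mu_C\varphi$ noted there. The $\nu$-term is identical, so \eqref{eq:sum_incl} follows.

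The main obstacle is not the nonsmooth analysis — which collapses to the Banach sum rule plus the directional-derivative inequality and the already-proved $\varphi^\circ$-description of $\partial^\mu_C$ — but the setup: constructing $T_\mu, T_\nu$ and verifying $m_\mu = T_\mu\circ m_\rho$, $m_\nu = T_\nu\circ m_\rho$, and pinning down the neighbourhood of $z_0$ in $\overline{Y}_\rho$ on which $\chi = \psi\circ T_\mu + g\circ T_\nu$ and on which all three Clarke subdifferentials are legitimately defined. This is exactly where the hypothesis $\Sp(C-x_0) = \Sp(Q-x_0)$ is used (it makes $\mu$, $\nu$, $\rho$ seminorms on one and the same space $X_0$), together with the implicit restriction of $x$ to $B_{\mu+\nu}(x_0,1)$.
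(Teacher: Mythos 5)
Your proof is correct in substance but takes a genuinely different route from the paper's. The paper's proof is a two-line duality argument: it endows $X_0$ with the strongest locally convex topology so that, by Proposition \ref{subd_gen_properties}, both sides of \eqref{eq:sum_incl} are non-empty convex $\sigma(X_0^*,X_0)$-compact sets whose support functions are $(\varphi+f)^\circ(x,\cdot)$ and $\varphi^\circ(x,\cdot)+f^\circ(x,\cdot)$ respectively; the inclusion then reduces to the scalar inequality $(\varphi+f)^\circ(x,v)\leq\varphi^\circ(x,v)+f^\circ(x,v)$, which follows from subadditivity of $\limsup$ together with the one genuinely new observation, namely that convergence in $\mu+\nu$ refines convergence in $\mu$ and in $\nu$ separately (the ``different filters'' remark). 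You instead stay entirely inside the Banach-space machinery of Section \ref{section_clarke}: you build the completion for $\rho=\mu+\nu$, factor $m_\mu=T_\mu\circ m_\rho$ and $m_\nu=T_\nu\circ m_\rho$ through contractive operators, identify $\chi=\psi\circ T_\mu+g\circ T_\nu$, apply Clarke's sum rule in $\overline{Y}_\rho$, and pull back via $M_\rho$ using the chain-rule inequality $(\psi\circ T_\mu)^\circ(z_0,w)\leq\psi^\circ(T_\mu z_0,T_\mu w)$ and the $\varphi^\circ$-characterization of $\partial^\mu_C\varphi(x)$ asserted after \eqref{eq:clarke_df}. What your approach buys is a more explicit accounting of where the hypotheses enter ($\Sp(C-x_0)=\Sp(Q-x_0)$ making all three seminorms live on one space, and the implicit restriction to $B_{\mu+\nu}(x_0,1)$, which the paper's ``for all $x\in Q\cap C$'' glosses over); what it costs is a considerably heavier setup ($T_\mu$, $T_\nu$, the density argument for $\chi=\psi\circ T_\mu+g\circ T_\nu$) to reach the same filter-comparison fact that the paper exploits directly. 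Note also that your last step needs the identity $\partial^\mu_C\varphi(x)=\{\zeta\in X_0^\prime:\varphi^\circ(x,v)\geq\left\langle\zeta,v\right\rangle\text{ for all }v\in X_0\}$ (not merely the defining image $M(\partial_C\psi(m(x-x_0)))$); the paper does state this equality, so the step is legitimate, but it is worth flagging as a dependency.
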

The proof is similar to Clarke \cite[Proposition 2.3.3]{clarke_book} except the fact that the generalized directional derivatives $(\varphi + f)^\circ$, $\varphi^\circ$ and $f^\circ$ are taken with different filters.

\begin{proof}
Let $x \in Q\cap C$ and let the space $X_0:=\Sp(C-x_0)$ be supplied with the strongest locally convex topology. Then $\mu$ and $\nu$ are continuous; hence, by  Proposition \ref{subd_gen_properties}, the Clarke subdifferential $\partial^{\mu+\nu}_C(\varphi+f)(x)$ and the set $\partial^{\mu}_C\varphi(x)+ \partial^\nu_Cf(x)$ are convex and compact in $\sigma(X_0^*,X_0)$ topology. Therefore, it is enough to show that for any $v \in X_0$ the next inequality holds:
$$(\varphi+f)^\circ(x,v) \leq \varphi^\circ(x,v) + f^\circ(x,v).$$
Let $v \in X_0$ be an arbitrary vector. Then
$$ (\varphi + f)^\circ(x,v) = \limsup_{\substack{(\mu+\nu)(y-x)\rightarrow 0, \\ t\rightarrow 0+}}\frac{(\varphi+f)(y+tv)-(\varphi+f)(y)}{t} \leq$$$$\leq \limsup_{\substack{(\mu+\nu)(y-x)\rightarrow 0, \\ t\rightarrow 0+}}\frac{\varphi(y+tv)-\varphi(y)}{t} + \limsup_{\substack{(\mu+\nu)(y-x)\rightarrow 0, \\ t\rightarrow 0+}}\frac{f(y+tv)-f(y)}{t} \leq$$$$\leq \varphi^\circ(x,v) + f^\circ(x,v),$$
thus the inclusion holds in \eqref{eq:sum_incl}.
\end{proof}
Theorem \ref{sumrule} is illustrated in Example \ref{ex_sum_rule}.
\begin{corollary}\label{corsumrule}
Under the assumptions of Theorem \ref{sumrule}, if additionally both the functions $\varphi$ and $f$ are regular at the point $y \in Q\cap C$, then $$\partial^{\mu+\nu}_{C}(\varphi+f)(y) = \partial^\mu_C\varphi(y) + \partial^\nu_{C}f(y).$$
\end{corollary}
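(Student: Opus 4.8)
The plan is to avoid redoing the sum rule from scratch: I would upgrade, using the regularity hypothesis, the single inequality that already appears inside the proof of Theorem~\ref{sumrule} into an \emph{equality} of generalized directional derivatives, and then obtain the reverse of the inclusion \eqref{eq:sum_incl} by a direct substitution into the definition \eqref{eq:clarke_df}. No compactness or separation argument is needed.

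First I would fix $y$ as in the statement and record that, for the three subdifferentials at $y$ to be meaningful, $y$ lies in $B_{\mu+\nu}(x_0,1)\subset B_\mu(x_0,1)\cap B_\nu(x_0,1)$; consequently, for every $v\in X_0:=\Sp(C-x_0)=\Sp(Q-x_0)$ the points $y+tv$ belong to $Q\cap C$ for all sufficiently small $t>0$, so the one-sided derivatives used below are well defined. Then, exactly as in the proof of Theorem~\ref{sumrule} — using that $(\mu+\nu)(z-y)\to 0$ forces both $\mu(z-y)\to 0$ and $\nu(z-y)\to 0$, so that a $\limsup$ along the $(\mu+\nu)$-filter does not exceed the $\limsup$ along the coarser $\mu$- and $\nu$-filters — one gets, for every $v\in X_0$,
$$(\varphi+f)^\circ(y,v)\ \le\ \varphi^\circ(y,v)+f^\circ(y,v).$$

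Next I would bring in regularity. Since $\varphi$ and $f$ are regular at $y$, the genuine limits $\varphi^\prime(y,v)=\varphi^\circ(y,v)$ and $f^\prime(y,v)=f^\circ(y,v)$ exist; adding the corresponding difference quotients shows that $(\varphi+f)^\prime(y,v)$ exists and equals $\varphi^\circ(y,v)+f^\circ(y,v)$. On the other hand, substituting the constant net $z\equiv y$ into the $\limsup$ defining $(\varphi+f)^\circ(y,v)$ yields $(\varphi+f)^\circ(y,v)\ge(\varphi+f)^\prime(y,v)$. Chaining this with the previous display forces equality throughout, so
$$(\varphi+f)^\circ(y,v)\ =\ \varphi^\circ(y,v)+f^\circ(y,v)\qquad\text{for all }v\in X_0$$
(in particular $\varphi+f$ is itself regular at $y$). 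To finish, I would recall from the display preceding \eqref{eq:clarke_df} that $\partial^\mu_C\varphi(y)=\{\zeta\in X_0^\prime:\varphi^\circ(y,v)\ge\left\langle\zeta,v\right\rangle\ \text{for all }v\in X_0\}$, and likewise for $f$ (with $\nu$) and for $\varphi+f$ (with $\mu+\nu$). Given $\zeta_1\in\partial^\mu_C\varphi(y)$ and $\zeta_2\in\partial^\nu_Cf(y)$, for every $v\in X_0$ we have $\left\langle\zeta_1+\zeta_2,v\right\rangle=\left\langle\zeta_1,v\right\rangle+\left\langle\zeta_2,v\right\rangle\le\varphi^\circ(y,v)+f^\circ(y,v)=(\varphi+f)^\circ(y,v)$, hence $\zeta_1+\zeta_2\in\partial^{\mu+\nu}_C(\varphi+f)(y)$. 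This is the inclusion $\partial^\mu_C\varphi(y)+\partial^\nu_Cf(y)\subset\partial^{\mu+\nu}_C(\varphi+f)(y)$, and the opposite inclusion is exactly the content of Theorem~\ref{sumrule}.

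The main point to be careful about is the bookkeeping with the three neighbourhood filters attached to $\mu$, $\nu$ and $\mu+\nu$: I must check that what the proof of Theorem~\ref{sumrule} actually delivers is the inequality above with $\varphi^\circ$ and $f^\circ$ computed against their own Minkowski functionals (not against $\mu+\nu$), and that regularity — a condition stated separately for the $\mu$- and the $\nu$-filter — is precisely what permits replacing the two $\limsup$'s by limits and adding them. Once that equality of $\circ$-derivatives is in hand, the rest is a mechanical substitution into \eqref{eq:clarke_df}.
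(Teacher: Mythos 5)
Your proposal is correct and rests on the same key step as the paper's own one-line proof: regularity gives $(\varphi+f)^\circ(y,v)\geq(\varphi+f)^\prime(y,v)=\varphi^\prime(y,v)+f^\prime(y,v)=\varphi^\circ(y,v)+f^\circ(y,v)$, which combined with the inequality from Theorem~\ref{sumrule} forces equality of the generalized directional derivatives. Your final step is a slight (and welcome) refinement: you obtain the inclusion $\partial^\mu_C\varphi(y)+\partial^\nu_Cf(y)\subset\partial^{\mu+\nu}_C(\varphi+f)(y)$ directly from the definition, without invoking the compactness/support-function argument that the paper's ``hence'' implicitly relies on.
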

\begin{proof}
Suppose that $\varphi$ and $f$ are regular at the point $y \in Q \cap C$. Then
$$(\varphi+f)^\circ(y,v) \geq (\varphi+f)^\prime(y,v) = \varphi^\prime(y,v) + f^\prime(y,v) = \varphi^\circ(y,v) + f^\circ(y,v),$$
hence $\partial^{\mu+\nu}_{C}(\varphi+f)(y) = \partial^\mu_C\varphi(y) + \partial^\nu_{C}f(y)$.
\end{proof}
\begin{theorem}\label{multiplerule}
(Multiple rule) Let $Q$ and $C$ be convex subsets of a real vector space such that $Q \cap C\neq \emptyset$, $\Sp(C-x_0) = \Sp(Q-x_0)$ and such that both $Q$ and $C$ are symmetric with respect to a point $x_0 \in Q\cap C$. Let $\mu$ be the Minkowski functional of the set $C-x_0$, $\nu$ be the Minkowski functional of the set $Q-x_0$. Next, let $\varphi:C\rightarrow \mathbb{R}$ be a $\mu$-Lipschitz function and $f:Q\rightarrow\mathbb{R}$ be a $\nu$-Lipschitz function. Then for all $x \in Q \cap C$ the next inclusion holds:
$$\partial^{\mu+\nu}_C(\varphi f)(x) \subset f(x)\partial^\mu_C\varphi(x) + \varphi(x)\partial^\nu_Cf(x).$$
\end{theorem}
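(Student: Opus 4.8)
The plan is to transport Clarke's Banach‑space product rule through the quotient construction of Section~\ref{section_clarke}, in the same spirit as the proof of the sum rule. Set $X_0:=\Sp(C-x_0)=\Sp(Q-x_0)$. Since $\mu\le\mu+\nu$ and $\nu\le\mu+\nu$ on $X_0$, the function $\varphi$ is $(\mu+\nu)$‑Lipschitz on $C$ and $f$ is $(\mu+\nu)$‑Lipschitz on $Q$, so the whole machinery \eqref{eq:psi}--\eqref{eq:clarke_df} applies with the seminorm $\mu+\nu$. Let $m:X_0\to\overline Y$ be the associated map into the Banach completion, let $\widehat\varphi$ and $\widehat f$ be the extensions of $\varphi$ and $f$ in the sense of \eqref{eq:psi}, and let $\psi$ be the extension of $(\varphi f)|_{Q\cap C}$. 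Passing to limits along sequences $\{y_n\}\subset Q\cap C$ with $m(y_n-x_0)\to y$ gives $\psi(y)=\widehat\varphi(y)\,\widehat f(y)$, and both $\widehat\varphi$ and $\widehat f$ are Lipschitz near every point of $B(0,1)\subset\overline Y$.

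Next I would invoke the classical product rule for the Clarke subdifferential in the Banach space $\overline Y$ (Clarke \cite[Proposition 2.3.13]{clarke_book}, which itself is the chain rule applied to $(a,b)\mapsto ab$), obtaining at the point $m(x-x_0)$
\[
\partial_C\psi(m(x-x_0))\subseteq \widehat f(m(x-x_0))\,\partial_C\widehat\varphi(m(x-x_0))+\widehat\varphi(m(x-x_0))\,\partial_C\widehat f(m(x-x_0));
\]
no closure operator is needed, since each summand is convex and $\sigma((\overline Y)^*,\overline Y)$‑compact, hence so is their sum. Then I would push this inclusion forward by the linear map $M$ of \eqref{eq:M}. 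Using the linearity of $M$, the equalities $\widehat\varphi(m(x-x_0))=\varphi(x)$ and $\widehat f(m(x-x_0))=f(x)$ (the extensions agree with the original functions on $m(Q\cap C-x_0)$), and the defining relation \eqref{eq:clarke_df} with $\mu+\nu$ in place of $\mu$, one arrives at
\[
\partial^{\mu+\nu}_C(\varphi f)(x)=M\big(\partial_C\psi(m(x-x_0))\big)\subseteq f(x)\,\partial^{\mu+\nu}_C\varphi(x)+\varphi(x)\,\partial^{\mu+\nu}_C f(x).
\]

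Finally, to obtain the inclusion in the form stated, I would enlarge the two subdifferentials on the right: $\partial^{\mu+\nu}_C\varphi(x)\subseteq\partial^\mu_C\varphi(x)$ and $\partial^{\mu+\nu}_C f(x)\subseteq\partial^\nu_C f(x)$. Indeed, $(\mu+\nu)(y-x)\to 0$ implies $\mu(y-x)\to 0$, so the $\limsup$ in the definition of $\varphi^\circ$ taken with respect to $\mu+\nu$ runs along a finer filter than the one taken with respect to $\mu$; hence $\varphi^\circ$ computed with $\mu+\nu$ does not exceed $\varphi^\circ$ computed with $\mu$ at every $(x,v)$, and the inclusion of subdifferentials follows (this is the same comparison of filters already used in the proof of Theorem~\ref{sumrule}). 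The analogous statement holds for $f$ and $\nu$; substituting the two inclusions completes the argument.

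The step I expect to be the main obstacle is exactly this last comparison $\partial^{\mu+\nu}_C\varphi(x)\subseteq\partial^\mu_C\varphi(x)$ together with its companion for $f$, since it is the only place where two genuinely different Minkowski functionals — and hence two different $\limsup$ filters on the common domain — interact; everything else is either a verbatim quotation of the Banach‑space calculus or routine bookkeeping with $m$ and $M$. A secondary point that needs care is the set‑up: one must run the construction of Section~\ref{section_clarke} with the seminorm $\mu+\nu$ on $X_0$ (rather than with a Minkowski functional attached to a single set), and verify that the extension $\psi$ of $\varphi f$ factors as the product $\widehat\varphi\,\widehat f$ of the corresponding extensions.
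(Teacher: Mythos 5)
Your argument is correct, but it takes a genuinely different route from the paper. The paper never passes to the Banach completion for this theorem: it endows $X_0$ with the strongest locally convex topology so that, by Proposition \ref{subd_gen_properties}, both sides of the claimed inclusion are convex and $\sigma(X_0^*,X_0)$-compact, and then reduces the inclusion to the support-function inequality $(\varphi f)^\circ(x,v)\le f(x)\varphi^\circ(x,v)+\varphi(x)f^\circ(x,v)$, which it verifies by a direct $\limsup$ computation along the $(\mu+\nu)$-filter (splitting the difference quotient as $f(y+tv)\frac{\varphi(y+tv)-\varphi(y)}{t}+\varphi(y)\frac{f(y+tv)-f(y)}{t}$ and using that $\varphi^\circ$ and $f^\circ$ taken along the finer $(\mu+\nu)$-filter are dominated by those taken along the $\mu$- and $\nu$-filters, respectively). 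You instead transport Clarke's Proposition 2.3.13 through the quotient construction and then weaken $\partial^{\mu+\nu}_C\varphi(x)\subseteq\partial^\mu_C\varphi(x)$ and $\partial^{\mu+\nu}_Cf(x)\subseteq\partial^\nu_Cf(x)$ by the filter-monotonicity of $\varphi^\circ$ in the seminorm. That monotonicity step, which you flag as the main risk, is in fact sound and is exactly the ``different filters'' comparison the paper itself uses inside its $\limsup$ computation; your formulation of it as an inclusion of subdifferentials is arguably cleaner and isolates it as a reusable lemma. What the paper's route buys is economy (no need to verify that the extension of $\varphi f$ factors as $\widehat\varphi\,\widehat f$, nor to set up the $(\mu+\nu)$-machinery in $\overline Y$); what your route buys is the stronger intermediate inclusion with $\partial^{\mu+\nu}$ on both sides and a cleaner delegation of the product-splitting (with its attendant sign issues when $\varphi(x)$ or $f(x)$ is negative) to Clarke's Banach-space result rather than to an ad hoc $\limsup$ manipulation.
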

\begin{proof}
As in Proposition \ref{sumrule}, let $x \in Q\cap C$ and let the space $X_0:=\Sp(C-x_0)$ be supplied with the strongest locally convex topology. Then $\mu$ and $\nu$ are continuous; hence, by  Proposition \ref{subd_gen_properties}, the Clarke subdifferential $\partial^{\mu+\nu}_C(\varphi f)(x)$ and the set $f(x)\partial^{\mu}_C\varphi(x)+ \varphi(x)\partial^\nu_Cf(x)$ are convex and compact in $\sigma(X_0^*,X_0)$ topology. Therefore, it is enough to show that for any $v \in X_0$ the next inequality holds:
$$(\varphi f)^\circ(x,v) \leq f(x)\varphi^\circ(x,v) + \varphi(x)f^\circ(x,v).$$
Let $v \in X_0$ be an arbitrary vector. Then
$$(\varphi f)^\circ (x,v) = \limsup_{\substack{(\mu+\nu)(y-x)\rightarrow 0, \\ t\rightarrow 0+}}\frac{\varphi(y+tv)f(y+tv) - \varphi(y)f(y)}{t} \leq $$$$\leq \limsup_{\substack{(\mu+\nu)(y-x)\rightarrow 0, \\ t\rightarrow 0+}}f(y+tv)\frac{\varphi(y+tv)-\varphi(y)}{t} + \limsup_{\substack{(\mu+\nu)(y-x)\rightarrow 0, \\ t\rightarrow 0+}}\varphi(y)\frac{f(y+tv)-f(y)}{t} \leq$$ $$\leq f(x)\varphi^\circ(x,v) + \varphi(x)f^\circ(x,v), $$
thus
$$\partial^{\mu+\nu}_C(\varphi f)(x) \subset f(x)\partial^\mu_C\varphi(x) + \varphi(x)\partial^\nu_Cf(x).\qedhere$$
\end{proof}
Theorem \ref{multiplerule} is illustrated in Example \ref{ex_mul_rule}.
\begin{corollary}\label{cor_multiplerule}
Under the assumptions of Theorem \ref{multiplerule}, let additionally the following assumptions hold:
\begin{enumerate}[(i)]
\item The equality $\mu=\nu$ holds;
\item The functions $\varphi$ and $f$ are regular and non-negative at the point $x\in B_{\mu}(x_0,1).$ 
\end{enumerate}
Then $\partial^{\mu}_C(\varphi f)(x) = f(x)\partial^\mu_C\varphi(x) + \varphi(x)^\mu\partial_Cf(x).$
\end{corollary}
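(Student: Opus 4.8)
The plan is to derive the reverse of the inclusion already supplied by Theorem~\ref{multiplerule} and then conclude equality from a support-function argument. First I would observe that, since $\mu=\nu$, the filter ``$(\mu+\nu)(y-x)\to 0$'' is identical to ``$\mu(y-x)\to 0$'', so $(\varphi f)^\circ$, $\varphi^\circ$ and $f^\circ$ are all formed along the same filter and $\partial^{\mu+\nu}_C=\partial^{\mu}_C$ (the subdifferential depends on $\mu$ only through $\varphi^\circ$); in particular Theorem~\ref{multiplerule} already gives $\partial^{\mu}_C(\varphi f)(x)\subset f(x)\,\partial^{\mu}_C\varphi(x)+\varphi(x)\,\partial^{\mu}_C f(x)$. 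As in the proof of Theorem~\ref{multiplerule} I would equip $X_0:=\Sp(C-x_0)$ with the strongest locally convex topology, so that $\mu$ is continuous; then Proposition~\ref{subd_gen_properties} makes each of $\partial^{\mu}_C(\varphi f)(x)$, $\partial^{\mu}_C\varphi(x)$, $\partial^{\mu}_C f(x)$ non-empty, convex and $\sigma(X_0^*,X_0)$-compact. Since addition and fixed-scalar multiplication are continuous for a vector topology, $f(x)\partial^{\mu}_C\varphi(x)+\varphi(x)\partial^{\mu}_C f(x)$ is again convex and $\sigma(X_0^*,X_0)$-compact, and by Hahn--Banach separation two such sets coincide iff their support functions over $X_0$ coincide.

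Next I would compute these support functions. Using $\varphi^\circ(x,v)=\max\{\langle\zeta,v\rangle:\zeta\in\partial^{\mu}_C\varphi(x)\}$ and the analogous identities for $f$ and $\varphi f$, and using $\varphi(x)\ge 0$, $f(x)\ge 0$ so that multiplication by these scalars commutes with the maximum over $\zeta$, the support function of $f(x)\partial^{\mu}_C\varphi(x)+\varphi(x)\partial^{\mu}_C f(x)$ at $v\in X_0$ equals $f(x)\varphi^\circ(x,v)+\varphi(x)f^\circ(x,v)$, while that of $\partial^{\mu}_C(\varphi f)(x)$ is $(\varphi f)^\circ(x,v)$. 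Hence it remains only to prove
$$(\varphi f)^\circ(x,v)\ \ge\ f(x)\varphi^\circ(x,v)+\varphi(x)f^\circ(x,v)\qquad(v\in X_0),$$
the opposite inequality being exactly the one established inside the proof of Theorem~\ref{multiplerule}. For this I would invoke regularity. Fix $v\in X_0$; since $\varphi$ and $f$ are $\mu$-Lipschitz and $\mu(tv)=|t|\mu(v)\to 0$, the maps $t\mapsto\varphi(x+tv)$ and $t\mapsto f(x+tv)$ are continuous at $0$, and by regularity $\varphi'(x,v)$ and $f'(x,v)$ exist and equal $\varphi^\circ(x,v)$, $f^\circ(x,v)$. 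Writing
$$\frac{\varphi(x+tv)f(x+tv)-\varphi(x)f(x)}{t}=f(x+tv)\,\frac{\varphi(x+tv)-\varphi(x)}{t}+\varphi(x)\,\frac{f(x+tv)-f(x)}{t}$$
and letting $t\to 0+$ yields $(\varphi f)'(x,v)=f(x)\varphi^\circ(x,v)+\varphi(x)f^\circ(x,v)$. Because the filter in \eqref{eq:gdd} contains the net $y\equiv x$, $t\to 0+$, one has $(\varphi f)^\circ(x,v)\ge(\varphi f)'(x,v)$, which is the required bound; combined with the previous step this gives $\partial^{\mu}_C(\varphi f)(x)=f(x)\partial^{\mu}_C\varphi(x)+\varphi(x)\partial^{\mu}_C f(x)$.

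I expect the only delicate point to be the support-function bookkeeping: one must make sure the three subdifferentials lie in a common $X_0^*$ carrying the same $\sigma(X_0^*,X_0)$ topology (which is why the strongest locally convex topology is imposed, as in Theorem~\ref{multiplerule}), and one must use the non-negativity of $\varphi(x)$ and $f(x)$ precisely at the step where a scalar is pulled through a supremum — a negative scalar would turn that supremum into an infimum and the identity would fail. The product-rule limit computation and the inequality $(\varphi f)^\circ(x,v)\ge(\varphi f)'(x,v)$ are routine, using only Lipschitz continuity along rays and the existence of the directional derivatives guaranteed by regularity.
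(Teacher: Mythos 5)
Your argument is correct, but it follows a genuinely different route from the paper's. The paper proves this corollary in one line by reduction to Corollary \ref{cor_chain_rule1}: it sets $g(u_1,u_2)=u_1u_2$ and $h(y)=(\varphi(y),f(y))$, notes $\varphi f=g\circ h$, and invokes the equality case of the chain rule; there the hypothesis $\mu=\nu$ makes both components of $h$ $\mu$-Lipschitz, and the non-negativity of $\varphi(x)$ and $f(x)$ is exactly hypothesis (ii) of that corollary, since $\partial_Cg(h(x))$ is the singleton $\{(f(x),\varphi(x))\}$. You instead prove the missing inequality $(\varphi f)^\circ(x,v)\ge f(x)\varphi^\circ(x,v)+\varphi(x)f^\circ(x,v)$ directly from regularity, via the elementary difference-quotient identity for the product and the fact that the generalized derivative dominates the one-sided directional derivative, and then you conclude by comparing support functions of weak*-compact convex sets, with non-negativity entering when the scalars $f(x),\varphi(x)$ are pulled through the maxima. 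Both arguments are sound. Yours is more self-contained: it bypasses Corollary \ref{cor_chain_rule1} entirely, and in particular its completeness assumption (iii), which is needed there only so that $\overline{\co}$ of a compact set stays compact; the paper's version is shorter because it delegates all the work to Clarke's chain rule. Your preliminary observation that $\partial^{\mu+\nu}_C=\partial^{\mu}_C$ when $\mu=\nu$ (the filters $2\mu(y-x)\to0$ and $\mu(y-x)\to0$ coincide, and the subdifferential depends on the gauge only through that filter) is left implicit in the paper and is worth recording, since the corollary's conclusion is stated with $\partial^\mu_C$ on the left while Theorem \ref{multiplerule} produces $\partial^{\mu+\nu}_C$.
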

\begin{proof}
As in Clarke \cite[Proposition 2.3.13.]{clarke_book}, we define the functions
$$ g:\mathbb{R}^2 \rightarrow \mathbb{R}, \ g(u_1,u_2) = u_1 u_2, \ \ (u_1,u_2) \in \mathbb{R}^2$$
$$ h:B_\mu(x_0,1) \rightarrow \mathbb{R}, \ h(y) = (\varphi(y),f(y)),\ \ x \in B_\mu(x_0,1).$$
Then for all $y \in B_\mu(x_0,1)$ the equality $\varphi(y) f(y) = g \circ h (y)$ holds. Thus, using Corollary \ref{cor_chain_rule1}, we obtain the desired equality.
\end{proof}

For the next theorem we endow the space $X_0$ defined in \eqref{eq:x_0} with a Hausdorff locally convex topology such that $\mu$ is continuous (see the beginning of the section). We also consider a Hausdorff locally convex space $E$, an open convex set $\mathcal{O}\subset E$ and a function \begin{equation}g:\mathcal{O}\rightarrow X_0\label{eq:g}\end{equation} such that the following assumptions hold:
\begin{enumerate}[(i)]
\item The function $g$ is G\^ateaux differentiable on the set $\mathcal{O}$. Moreover, we demand that for all $v \in E$ the function 
$$\mathcal{O}\ni x \mapsto Dg(x)v \in X_0 $$\label{as1}
is continuous, where $Dg(x)$ is the G\^ateaux derivative of the function $g$ at the point $x$;
\item The G\^ateaux derivative $E \ni v \rightarrow Dg(x)v \in X_0$ is continuous for all $x \in \mathcal{O}$\footnote{There are slightly different definitions of the G\^ateaux derivative. We use the definition given in \text{Bogachev et al. \cite{bogachev}}, where the G\^ateaux derivative is a sequantially continuous linear map. If $\zeta \in E^*$, then we need the inclusion $\zeta \circ Dg(x) \in E^*$ to be held; thus, the assumption is necessary.};
\item There exists a point $x \in \mathcal{O}$ such that $g(x) \in B_{\mu}(x_0,1)$;\label{as2}
\item There exists a continuous seminorm $p$ in $E$ such that for all $\{u,w\}\subset \mathcal O$ the next inequality holds:\label{as3}
$$\mu(g(u) - g(w)) \leq p(u-w) \text{ for all }\{u,w\}\subset \mathcal{O}. $$
\end{enumerate}

\begin{theorem}\label{prop_chain_rule1}
(Chain rule I) Let $C$ be a convex set that is symmetric with respect to a point $x_0 \in C$, $\mu$ be the Minkowski functional of the set $C-x_0$ and let $\varphi : C \rightarrow \mathbb{R}$ be a $\mu$-Lipschitz function with the constant $L>0$. Let $g$ be the function defined in \eqref{eq:g}.
Then for every point $x \in \mathcal{O}$ such that $g(x) \in B_{\mu}(x_0,1)$ the next inclusion holds:
\begin{equation}\partial^p_C(\varphi\circ g)(x) \subset \{\zeta\circ Dg(x) : \zeta \in \partial^\mu_C\varphi (g(x)) \}.\label{eq:chainruleone}\end{equation}
\end{theorem}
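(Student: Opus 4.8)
The plan is to reduce this to the classical Clarke chain rule in a Banach space, exactly as was done for all of the preceding calculus rules, via the quotient/completion construction of diagram \eqref{eq:comdiagr}. First I would set up the Banach-space data: let $\overline{Y}$, $\pi$, $i$, $m$ be as in \eqref{eq:x_0}--\eqref{eq:comdiagr}, let $\psi$ be the function from \eqref{eq:psi} associated to $\varphi$, and consider the composite $\tilde g := m \circ g : \mathcal{O} \to \overline{Y}$. Assumptions \eqref{as1} and the footnote version of G\^ateaux differentiability, together with the fact that $m$ is continuous and linear, give that $\tilde g$ is G\^ateaux differentiable on $\mathcal{O}$ with $D\tilde g(x) = m \circ Dg(x)$, and that $v \mapsto D\tilde g(x)v$ and $x \mapsto D\tilde g(x)v$ retain the required continuity. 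Assumption \eqref{as3} combined with $\|m(z - x_0)\| = \mu(z - x_0)$ (from \eqref{eq:pi} and the isometry $i$) yields $\|\tilde g(u) - \tilde g(w)\| \le p(u - w)$, so $\tilde g$ is $p$-Lipschitz into $\overline{Y}$ in the honest normed sense; and assumption \eqref{as2} says $\tilde g(x)$ lands in the open unit ball $B(0,1) = \Int \cl m(C)$, which is the domain on which $\psi$ and $\partial_C \psi$ live.

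Next I would invoke the classical Clarke chain rule (Clarke \cite[Theorem 2.3.10]{clarke_book}) for $\psi \circ \tilde g$ at the point $x$: since $\varphi \circ g = \psi \circ \tilde g$ on a neighbourhood of $x$ in $\mathcal{O}$ — this is the identity $\varphi(g(y)) = \psi(m(g(y) - x_0))$, valid because $g(y) \in B_\mu(x_0,1)$ for $y$ near $x$ by continuity of $g$ and openness of $B(0,1)$ — the Banach-space theorem gives
$$\partial_C(\psi \circ \tilde g)(x) \subset \{\, \eta \circ D\tilde g(x) : \eta \in \partial_C \psi(\tilde g(x)) \,\} = \{\, \eta \circ m \circ Dg(x) : \eta \in \partial_C\psi(m(g(x) - x_0)) \,\}.$$
Now I apply the bookkeeping that identifies $\partial^p_C$ and $\partial^\mu_C$ with their Banach-space counterparts. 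On the left, $\partial^p_C(\varphi \circ g)(x)$ is by definition $M_p(\partial_C(\psi \circ \tilde g)(x))$ where $M_p$ is the analogue of the map $M$ in \eqref{eq:M} for the seminorm $p$ on $E$ (here one must note that $\varphi \circ g$ is $p$-Lipschitz on $\mathcal O$, which follows by composing the $\mu$-Lipschitz bound on $\varphi$ with \eqref{as3}, so that $\partial^p_C(\varphi \circ g)$ is well-defined). On the right, for $\eta \in \partial_C \psi(m(g(x)-x_0))$ we have $M(\eta) = \eta \circ m \in \partial^\mu_C\varphi(g(x))$ by \eqref{eq:clarke_df}, and $M_p(\eta \circ m \circ Dg(x))$ — i.e. the element of $E^*$ obtained by precomposing with the completion map for $p$ — is exactly $(\eta \circ m) \circ Dg(x) = M(\eta) \circ Dg(x)$. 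The point is that $M_p$ commutes with precomposition by $Dg(x)$, which holds because $Dg(x)$ maps into $X_0$ and the kernels are respected: $\eta \circ m$ annihilates $\Ker \mu$, and by \eqref{as3} any $u$ with $p$-seminorm zero satisfies $\mu(g(u')-g(u''))=0$-type estimates that push the factorization through. Pushing $M_p$ through the inclusion and using that $M_p$ is linear and injective (the analogue of Lemma \ref{propmaps}) gives
$$\partial^p_C(\varphi \circ g)(x) = M_p\big(\partial_C(\psi \circ \tilde g)(x)\big) \subset \{\, M(\eta) \circ Dg(x) : \eta \in \partial_C\psi(m(g(x)-x_0)) \,\} = \{\, \zeta \circ Dg(x) : \zeta \in \partial^\mu_C\varphi(g(x)) \,\},$$
which is \eqref{eq:chainruleone}.

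The routine parts — checking that $\tilde g$ inherits the hypotheses of the Banach chain rule from \eqref{as1}--\eqref{as3}, and that $\varphi \circ g$ is $p$-Lipschitz — are mechanical. The step I expect to need the most care is the final identification: precisely verifying that the "quotient-and-complete" map $M_p$ on the domain side interacts correctly with the linear map $Dg(x)$, i.e. that $\zeta \mapsto \zeta \circ Dg(x)$ descends consistently through both the $E$-side completion and the $X_0$-side quotient. This is where one must be honest about which functionals are being composed with which maps, since $\partial^p_C$ and $\partial^\mu_C$ are defined through two different quotient constructions; one should write $Dg(x): E \to X_0$, then $m \circ Dg(x): E \to \overline Y$, check this factors through $E/\Ker p$ because of \eqref{as3} (so that the completion on the $E$-side is legitimate), and only then transport the Banach inclusion back. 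A subtlety worth flagging explicitly: one needs $g$ continuous enough that $g(y) \in B_\mu(x_0,1)$ for $y$ in a neighbourhood of $x$, so that $\varphi \circ g = \psi \circ \tilde g$ holds on an open set and the local Clarke objects genuinely agree — this uses that $B(0,1)$ is open in $\overline Y$ and $\tilde g$ is (at least) $p$-continuous, hence continuous into $\overline Y$.
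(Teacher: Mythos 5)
Your overall strategy --- transport everything through the quotient-and-completion diagram and invoke Clarke's Banach-space chain rule (Theorem 2.3.10) wholesale --- is \emph{not} the route the paper takes, and as written it has two concrete gaps on the domain side that the paper's direct argument is specifically designed to avoid. First, the reduction of $E$ to a Banach space does not go through: $\mathcal{O}$ (and the neighbourhood $\mathcal{O}(x)$ on which $\varphi\circ g$ is $p$-Lipschitz) is open in the locally convex topology of $E$, which is in general strictly finer than the topology of the single seminorm $p$. Hence $\mathcal{O}(x)$ need not contain any $p$-ball around $x$ (think of $E=\mathbb{R}^2$, $p(x_1,x_2)=|x_1|$, $\mathcal O$ a Euclidean ball: $p$-balls are unbounded strips), so the descended map on $E/\Ker p$ and its completion is not defined on any open set containing the image of $x$. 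Clarke's Theorem 2.3.10 requires the inner map to be defined, and the composite to be Lipschitz, on a norm-neighbourhood; your $\tilde g$ does not satisfy this, so the theorem cannot be applied as you propose. Second, even setting that aside, Clarke's chain rule I requires the inner map to be \emph{strictly} differentiable at the point, whereas the hypotheses here give only G\^ateaux differentiability together with continuity of $x\mapsto Dg(x)v$ for each fixed $v$. Upgrading this to strict differentiability of the descended map is not ``mechanical'': the natural way to do it is via the generalized mean value theorem plus the equi-Lipschitz bound $\mu(Dg(u)v)\le p(v)$ coming from assumption (iv) --- which is essentially the computation you would be trying to outsource to Clarke's theorem in the first place.

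For comparison, the paper proves the inclusion directly: it observes that both sides of \eqref{eq:chainruleone} are convex and $\sigma(E^*,E)$-compact, so it suffices to prove the support-function inequality $(\varphi\circ g)^\circ(x,v)\le\varphi^\circ(g(x),Dg(x)v)$; this is obtained by applying the Lebourg mean value theorem (Theorem \ref{lebourg} of this paper, on the $\varphi$ side) and the generalized mean value theorem for G\^ateaux differentiable maps (on the $g$ side) to the difference quotient, and then passing to the limit using the joint upper semicontinuity of $\varphi^\circ(\cdot,\cdot)$ from Proposition \ref{upsgdd} together with assumption \eqref{as1}. Everything there happens in $E$ and in $X_0$ directly; no completion of the domain space is ever needed. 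Your preliminary steps (constructing $\mathcal O(x)$ with $g(\mathcal O(x))\subset B_\mu(x_0,1)$ and checking that $\varphi\circ g$ is $p$-Lipschitz with constant $Lp$) do match the paper, but the core of your argument would need to be replaced by this direct estimate.
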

In case of a Banach space, see Clarke \cite[Theorem 2.3.10.]{clarke_book}.
\begin{proof}
Let us choose $x \in \mathcal O$ such that $g(x) \in B_{\mu}(x_0,1)$. It follows from assumptions \eqref{as2} and \eqref{as3} of the definition of the function $g$ that there exists an open neighbourhood $\mathcal{O}(x)$ of the point $x$ such that $g(\mathcal{O}(x))\subset B_{\mu}(x_0,1)$. The function $\varphi\circ g(\cdot)$ is $p$-Lipschitz on the set $\mathcal{O}(x)$. Indeed, let $\{u,w\}\subset\mathcal{O}(x)$, then
$$|\varphi \circ g(u) - \varphi \circ g(w)| \leq L\mu(g(u) - g(w)) \leq Lp(u-w).$$
It is enough to check the inequality
$$(\varphi\circ g)^\circ(x,v) \leq \varphi^\circ(g(x),Dg(x)v),$$
since both sets in \eqref{eq:chainruleone} are convex and compact in $\sigma(E^*,E)$ topology. Let $\{y,y+tv\}\subset \mathcal{O}(x)$, $t>0$, $v \in E\setminus\{0\}$. By the Lebourg mean value theorem, there exists a point $z_{y,t} \in (g(y),g(y+tv))$ such that
$$\varphi(g(y+tv)) - \varphi(g(y)) = \left\langle \zeta, g(y+tv)-g(y) \right\rangle,$$
where $\zeta \in \partial^\mu_C\varphi (g(z_{y,t}))$ (if $g(y) = g(y+tv)$, then we put $z_{y,t}:=g(y)$). By the generalized mean value theorem (see, e.g., Bogachev et al. \cite[Theorem 12.2.6]{bogachev}), there exists a point $u_{y,t} \in (y,y+tv)$ such that
$$\frac{g(y+tv)-g(y)}{t} = Dg(u_{y,t})v,$$
hence
$$\frac{\varphi(g(y+tv)) - \varphi(g(y))}{t} = \left\langle \zeta, Dg(u_{y,t})v \right\rangle \leq \varphi^\circ(g(x),Dg(u_{y,t})v).$$
Note that if $y \rightarrow x$ and $t\rightarrow 0+$, then $g(y) \rightarrow g(x)$ and $g(y+tv) \rightarrow g(x)$ due to \eqref{as3}. Since the the space $E$ is locally convex, $z_{y,t}\rightarrow g(x)$ and $u_{y,t}\rightarrow x$ as $y \rightarrow x$ and $t \rightarrow 0+$.
By Proposition \ref{upsgdd}, the function $\varphi^\circ(\cdot,\cdot)$ is upper semi-continuous, therefore
$$(\varphi\circ g)^\circ(x,v) \leq \varphi^\circ(x,Dg(x)v),$$
thus 
$$\partial^p_C(\varphi\circ g)(x) \subset \{\zeta\circ Dg(x) : \zeta \in \partial^\mu_C\varphi(g(x)) \}.\qedhere$$
\end{proof}
Theorem \ref{prop_chain_rule1} is illustrated in Example \ref{ex_chain_rule1}.
\begin{corollary}
Under the assumptions of Theorem \ref{prop_chain_rule1}, if $\varphi$ is regular at the point $g(x)$, then the equality holds in \eqref{eq:chainruleone} at the point $x$.
\end{corollary}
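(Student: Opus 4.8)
The plan is to upgrade the inclusion \eqref{eq:chainruleone} of Theorem \ref{prop_chain_rule1} to an equality by a support-function argument, in the same spirit as Corollary \ref{cor_chain_rule1} and Corollary \ref{corsumrule}. Recall that in the proof of Theorem \ref{prop_chain_rule1} it was already noted that both sides of \eqref{eq:chainruleone} are convex and compact in the $\sigma(E^*,E)$ topology (the right-hand side being the image of the $\sigma(X_0^*,X_0)$-compact convex set $\partial^\mu_C\varphi(g(x))$ under the linear map $\zeta\mapsto\zeta\circ Dg(x)$, which is $\sigma(X_0^*,X_0)$-to-$\sigma(E^*,E)$ continuous because $Dg(x)v\in X_0$ for every $v\in E$). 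By the max-formula for $\varphi^\circ$ recorded right after \eqref{eq:clarke_df}, the support function at $v\in E$ of $\partial^p_C(\varphi\circ g)(x)$ is $(\varphi\circ g)^\circ(x,v)$, and that of the right-hand side of \eqref{eq:chainruleone} is $\varphi^\circ(g(x),Dg(x)v)$. Hence the two compact convex sets coincide once we know $(\varphi\circ g)^\circ(x,v)=\varphi^\circ(g(x),Dg(x)v)$ for all $v\in E$; Theorem \ref{prop_chain_rule1} already gives ``$\le$'', so only the reverse inequality is left.

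For ``$\ge$'' I would show that the ordinary directional derivative $(\varphi\circ g)'(x,v)$ exists and equals $\varphi'(g(x),Dg(x)v)$. Fix $v\in E$ and put $w_t:=t^{-1}\bigl(g(x+tv)-g(x)\bigr)$. Since $g$ is G\^ateaux differentiable at $x$, $w_t\to Dg(x)v$ in $X_0$ as $t\to0+$, and since $\mu$ is continuous on $X_0$ this yields $\mu(w_t-Dg(x)v)\to0$. For $t$ small enough the points $g(x+tv)=g(x)+tw_t$ and $g(x)+tDg(x)v$ belong to $B_\mu(x_0,1)$ — this is the same localisation that produced the neighbourhood $\mathcal{O}(x)$ in the proof of Theorem \ref{prop_chain_rule1}, using assumption \eqref{as3} of the definition of $g$ together with $\mu(g(x)-x_0)<1$. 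The $\mu$-Lipschitz property of $\varphi$ then gives
$$\Bigl|\frac{\varphi(g(x+tv))-\varphi(g(x))}{t}-\frac{\varphi(g(x)+tDg(x)v)-\varphi(g(x))}{t}\Bigr|\le L\,\mu\bigl(w_t-Dg(x)v\bigr),$$
and the right-hand side tends to $0$ as $t\to0+$. Because $\varphi$ is regular at $g(x)$, the second difference quotient converges to $\varphi'(g(x),Dg(x)v)=\varphi^\circ(g(x),Dg(x)v)$; hence so does the first, so $(\varphi\circ g)'(x,v)$ exists and equals $\varphi^\circ(g(x),Dg(x)v)$. Consequently $(\varphi\circ g)^\circ(x,v)\ge(\varphi\circ g)'(x,v)=\varphi^\circ(g(x),Dg(x)v)$, which is the missing inequality.

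Combining the two inequalities gives $(\varphi\circ g)^\circ(x,v)=\varphi^\circ(g(x),Dg(x)v)$ for every $v\in E$, and by the support-function argument of the first paragraph this turns \eqref{eq:chainruleone} into an equality at $x$ (and, incidentally, shows that $\varphi\circ g$ is regular at $x$). The one genuinely delicate point is the interchange of the limit defining the directional derivative of $\varphi\circ g$ with the G\^ateaux approximation $g(x+tv)=g(x)+tDg(x)v+o(t)$: this is exactly where it matters that the convergence $w_t\to Dg(x)v$ takes place in a topology in which $\mu$ is continuous, so that the $\mu$-Lipschitz constant of $\varphi$ can absorb the error $\mu(w_t-Dg(x)v)$; the rest is routine.
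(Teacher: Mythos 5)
Your proposal is correct and follows essentially the same route as the paper: bound the difference of the two difference quotients by $L\,\mu\bigl(t^{-1}(g(x+tv)-g(x))-Dg(x)v\bigr)\to 0$, invoke regularity of $\varphi$ at $g(x)$ to identify the limit with $\varphi^\circ(g(x),Dg(x)v)$, and conclude equality of the two compact convex sets from equality of their support functions. You merely spell out the localisation into $B_\mu(x_0,1)$ and the support-function step more explicitly than the paper does.
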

\begin{proof}
Let $\varphi$ be regular at the point $g(x)$ and let $v \in X_0$ be an arbitrary vector. Note that
$$ \left|\frac{\varphi(g(x)+tDg(x)v) - \varphi(g(x))}{t} - \frac{\varphi(g(x+tv)) - \varphi(g(x))}{t}\right| \leq $$$$\leq L\mu\left(\frac{g(x+tv)-g(x)}{t}-Dg(x)v\right) \rightarrow 0, \ t\rightarrow 0+,$$
hence
$$\varphi^\prime(g(x),Dg(x)v) = \lim_{t \rightarrow 0+}\frac{\varphi(g(x)+tDg(x)v) - \varphi(g(x))}{t} =$$$$= \lim_{t\rightarrow 0+}\frac{\varphi(g(x+tv)) - \varphi(g(x))}{t} = (\varphi\circ g)^\prime(x,v) \leq (\varphi\circ g)^\circ(x,v),$$
thus 
$$\partial^p_C(\varphi\circ g)(x) = \{\zeta\circ Dg(x) : \zeta \in \partial^\mu_C\varphi(g(x)) \}. \qedhere$$
\end{proof}

\subsection{Other properties}
The propositions in this subsection are treated the same way as in Clarke \cite{clarke_book} with the same conditions.
Let $C$ be a convex subset of a real vector space such that $C$ is symmetric with respect to a point $x_0 \in C$ and let $\mu$ be the Minkowski functional of the set $C-x_0$. Let $\{f_i : C \rightarrow \mathbb{R}\}_{i=1,2,\ldots,n}$ be a collection of $\mu$-Lipschitz functions. We denote
\begin{equation}
 \varphi (x) := \max\{f_1 (x), \ldots, f_n(x)\}, \ \ x \in B_{\mu}(x_0,1) \label{eq:max}
\end{equation}
and put
$$I(x) := \text{ the set of all indexes } i \in \{1,\ldots, n\} \text{ such that }\varphi(x) = f_i(x).$$
\begin{proposition}Let $\varphi$ be the function defined in \eqref{eq:max}. Then the next inclusion holds for all $x \in B_{\mu}(x_0,1)$:
\begin{equation}
\partial^\mu_C \varphi (x) \subset \co \{\partial^\mu_C f_i(x) : i \in I(x)\}.\label{eq:pointwise}
\end{equation}
If additionally all the functions $f_i$ are regular at the point $x$ for each $i \in I(x)$, then the equality holds in \eqref{eq:pointwise}.
\end{proposition}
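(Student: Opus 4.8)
The plan is to transfer Clarke's finite pointwise-maximum rule (Clarke \cite[Proposition 2.3.12]{clarke_book}) from the Banach space $\overline Y$ to the vector-space setting through the map $M$ of \eqref{eq:M}, exactly in the style already used in the proofs of Theorem \ref{prop_chain_rule2} and Theorem \ref{sumrule}. First, for each $i\in\{1,\ldots,n\}$ I would introduce the extension
$$\hat f_i(x) := \lim_{k\rightarrow\infty} f_i(y_k), \qquad m(y_k-x_0)\rightarrow x,\ \{y_k\}_{k=1,2,\ldots}\subset C,$$
which is well-defined on $\cl m(C)$ because $f_i$ is $\mu$-Lipschitz, just as $\hat h$ was in the proof of Theorem \ref{prop_chain_rule2}. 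Since the maximum of finitely many real numbers is a continuous function of its arguments, passing to the limit in $\varphi=\max_i f_i$ shows that the function $\psi$ of \eqref{eq:psi} associated with $\varphi$ satisfies $\psi(x')=\max\{\hat f_1(x'),\ldots,\hat f_n(x')\}$ for all $x'\in\cl m(C)$. Moreover, for $x\in B_\mu(x_0,1)$ one has $\hat f_i(m(x-x_0))=f_i(x)$ and $\psi(m(x-x_0))=\varphi(x)$, so the set of indices active for $\psi$ at $m(x-x_0)$ coincides with $I(x)$.

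Next I would apply Clarke's Banach-space maximum rule to $\psi=\max_i\hat f_i$ on $B(0,1)\subset\overline Y$, which gives, writing $x':=m(x-x_0)$,
$$\partial_C\psi(x')\subset\co\{\partial_C\hat f_i(x') : i\in I(x)\}.$$
No closure operator is needed here, because a finite convex hull of the compact convex sets $\partial_C\hat f_i(x')$ is again compact and convex. Applying $M$ to both sides, using its linearity (so that $M(\co A)=\co M(A)$ for any set $A\subset(\overline Y)^*$), together with the identities $M(\partial_C\hat f_i(x'))=\partial^\mu_C f_i(x)$ and $M(\partial_C\psi(x'))=\partial^\mu_C\varphi(x)$ coming from \eqref{eq:clarke_df}, yields
$$\partial^\mu_C\varphi(x)\subset\co\{\partial^\mu_C f_i(x) : i\in I(x)\},$$
which is \eqref{eq:pointwise}.

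For the equality assertion, suppose every $f_i$ with $i\in I(x)$ is regular at $x$. Then each $\hat f_i$ is regular at $x'$: the generalized directional derivatives coincide by Lemma \ref{equality_of_gdd}, and the ordinary directional derivatives transfer through $m$ and extend to all of $\overline Y$ by density of $m(X_0)$ and the Lipschitz dependence of the difference quotient on the direction, exactly as invoked in the proof of Corollary \ref{cor_chain_rule1}. Clarke's maximum rule then gives the equality $\partial_C\psi(x')=\co\{\partial_C\hat f_i(x') : i\in I(x)\}$; applying $M$ and using $M(\co A)=\co M(A)$ once more produces equality in \eqref{eq:pointwise}. (Here no topological assumption on $X_0$ is required, since only finite convex hulls, and no closures, occur.)

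The main point to be careful about is the first paragraph: checking that the limit defining $\psi$ genuinely commutes with the finite maximum so that $\psi=\max_i\hat f_i$ with the correct active index set $I(x)$, and, in the equality case, that regularity of $f_i$ at $x$ really does pass to regularity of $\hat f_i$ at $m(x-x_0)$. Both are routine consequences of Lemma \ref{equality_of_gdd}, the $\mu$-Lipschitz bounds, and the density of $m(X_0)$ in $\overline Y$, but they are the only steps that are not purely formal; once they are in place, everything reduces to applying the linear map $M$ to Clarke's classical statement.
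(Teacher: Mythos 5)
Your proposal is correct and follows essentially the same route as the paper: define the extensions $\hat f_i$ on $\overline Y$, apply Clarke's Banach-space maximum rule to $\psi=\max_i\hat f_i$, and push the resulting inclusion (and, under regularity, equality) through the linear injection $M$. The only difference is that you spell out the verification that $\psi=\max_i\hat f_i$ with the same active index set $I(x)$ and that regularity transfers to the $\hat f_i$, steps the paper leaves implicit.
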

\begin{proof}
Let $\overline{Y}$ be the space defined in \eqref{eq:y_c}, $m$ be the map defined in \eqref{eq:comdiagr}, $\psi$ be the function defined in \eqref{eq:psi}  and M be the map defined in \eqref{eq:M}. As at the beginning of the section, we consider the duplicates of the functions $f_i$ in the space $\overline{Y}$:
$$ \hat{f}_i(x) := \lim_{n\rightarrow \infty} f_i(y_n), \ m(y_n-x_0) \rightarrow x, \ \{y_n\}_{n=1,2,\ldots}\subset C, \ i \in \{1,\ldots,n\}.$$
According to Clarke \cite[Proposition 2.3.12.]{clarke_book},  for all $x \in B_{\mu}(x_0,1)$
$$\partial \psi (m(x-x_0)) \subset \co \{\partial \hat{f}_i(m(x-x_0)) : i \in I(x)\}.$$
Applying the map $M$ to both sides, we obtain that the inclusion in \eqref{eq:pointwise} holds. Similarly if $f_i$ are regular at the point $x$ for each $i \in I(x)$, then $\hat{f}_i$ are regular at the point $m(x-x_0)$, and thus the equality holds in \eqref{eq:pointwise}.
\end{proof}

Let $X_1$ and $X_2$ be real vector spaces, $C_1$ be a convex subset of $X_1$ such that $C_1$ is symmetric with respect to a point $x^0_1 \in C_1$ and let $C_2$ be a convex subset of $X_2$ such that $C_2$ is symmetric with respect to a point $x^0_2 \in C_2$. Consider the vector space $X := X_1 \times X_2$ and put $C: = C_1 \times C_2$. Let $\mu$ be the Minkowski functional of the set $C - (x^0_1, x^0_2)$. Consider a regular $\mu$-Lipschitz function
$$ \varphi : C_1 \times C_2 \rightarrow \mathbb{R}.$$
Denote 
$$\varphi_{x_1}(\cdot) := \varphi(x_1,\cdot), \ \ x_1 \in C_1,$$
$$\varphi_{x_2}(\cdot) := \varphi(\cdot,x_2), \ \ x_2 \in C_2.$$
 For the functions $\varphi_{x_1}(\cdot)$ and $\varphi_{x_2}(\cdot)$ put
$$ \partial_{x_i} \varphi(\cdot) := \{ \zeta \in \Sp (C_i-x^0_i) : \varphi_{x_i}^\circ(\cdot,v) \geq \left\langle\zeta,v\right\rangle \text{ for all } v \in \Sp (C_i-x^0_i) \}, \ i \in \{1,2\}.$$
\begin{proposition}
Let $\varphi : C_1 \times C_2 \rightarrow \mathbb{R}$ be a regular $\mu$-Lipschitz function. Then for all $(x_1,x_2) \in B_\mu((x^0_1,x^0_2),1)$ the next inclusion holds:
\begin{equation}
\partial^\mu_C \varphi(x_1,x_2) \subset \partial_{x_2} \varphi(x_1) \times \partial_{x_1} \varphi(x_2).\label{eq:partial}
\end{equation}
\end{proposition}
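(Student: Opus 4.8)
The plan is to follow the pattern of Theorem \ref{sumrule} and Theorem \ref{multiplerule}: unwind the product structure, reduce \eqref{eq:partial} to a comparison of generalized directional derivatives in the partial directions, and spend the regularity of $\varphi$ at the one point where that comparison has to run in the right direction. No passage to the Banach completion of Section \ref{section_clarke} is needed (though it is available, see below).

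First I would record the product structure. Writing $X_0^{(i)}:=\Sp(C_i-x^0_i)$, one has $\Sp(C-(x^0_1,x^0_2))=X_0^{(1)}\times X_0^{(2)}$ and $C-(x^0_1,x^0_2)=(C_1-x^0_1)\times(C_2-x^0_2)$, whence, with $\mu_i$ the Minkowski functional of $C_i-x^0_i$,
$$\mu(v_1,v_2)=\max\{\mu_1(v_1),\mu_2(v_2)\},\qquad \text{in particular}\quad \mu(v_1,0)=\mu_1(v_1),\ \ \mu(0,v_2)=\mu_2(v_2).$$
Two immediate consequences: $(x_1,x_2)\in B_\mu((x^0_1,x^0_2),1)$ forces $x_i\in B_{\mu_i}(x^0_i,1)$; and from $|\varphi(x_1,u)-\varphi(x_1,w)|\le L\mu(0,u-w)=L\mu_2(u-w)$ the partial function $\varphi_{x_1}$ is $\mu_2$-Lipschitz with constant $L$ (symmetrically $\varphi_{x_2}$ is $\mu_1$-Lipschitz), so that $\varphi_{x_1}^\circ(x_2,\cdot)$, $\varphi_{x_2}^\circ(x_1,\cdot)$ and the two partial subdifferentials on the right-hand side of \eqref{eq:partial} make sense. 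I would also use that every linear functional on $X_0^{(1)}\times X_0^{(2)}$ is uniquely of the form $(v_1,v_2)\mapsto\langle\zeta_1,v_1\rangle+\langle\zeta_2,v_2\rangle$ with $\zeta_i\in(X_0^{(i)})'$, which identifies $\Sp(C-(x^0_1,x^0_2))'$ with $(X_0^{(1)})'\times(X_0^{(2)})'$ and makes the right-hand side of \eqref{eq:partial} a subset of that space.

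The core step is the following. Since $\varphi$ is regular at $(x_1,x_2)$ and $(v_1,0)\in X_0^{(1)}\times X_0^{(2)}$ for every $v_1\in X_0^{(1)}$,
$$\varphi^\circ((x_1,x_2),(v_1,0))=\varphi^\prime((x_1,x_2),(v_1,0))=\lim_{t\to0+}\frac{\varphi(x_1+tv_1,x_2)-\varphi(x_1,x_2)}{t}=\varphi_{x_2}^\prime(x_1,v_1)\le\varphi_{x_2}^\circ(x_1,v_1),$$
the last inequality being the automatic bound of a one-sided derivative by the corresponding generalized directional derivative. Now take $\zeta=(\zeta_1,\zeta_2)\in\partial^\mu_C\varphi(x_1,x_2)$; testing the defining inequality of $\partial^\mu_C$ on the directions $(v_1,0)$ gives, for all $v_1\in X_0^{(1)}$,
$$\langle\zeta_1,v_1\rangle=\langle\zeta,(v_1,0)\rangle\le\varphi^\circ((x_1,x_2),(v_1,0))\le\varphi_{x_2}^\circ(x_1,v_1),$$
so $\zeta_1\in\partial_{x_2}\varphi(x_1)$; testing on the directions $(0,v_2)$ gives $\zeta_2\in\partial_{x_1}\varphi(x_2)$ in the same way, hence $\zeta\in\partial_{x_2}\varphi(x_1)\times\partial_{x_1}\varphi(x_2)$, which is \eqref{eq:partial}.

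I do not expect a genuine obstacle: everything is formal except the single use of regularity. Without it, $\varphi^\circ((x_1,x_2),(v_1,0))$ --- a $\limsup$ in which $y$ is allowed to move in both coordinates --- is only bounded \emph{below} by $\varphi_{x_2}^\circ(x_1,v_1)$, so the chain of inequalities would point the wrong way and (already in finite dimensions) the inclusion can fail; regularity is precisely what lets one replace it by the one-sided derivative $\varphi_{x_2}^\prime(x_1,v_1)$, which \emph{is} $\le\varphi_{x_2}^\circ(x_1,v_1)$. Thus the one thing to state with care in the write-up is the direction of this comparison. An alternative, more in the spirit of Section \ref{section_clarke}, is to pass to $\overline Y=\overline{Y_1}\times\overline{Y_2}$, apply the Banach-space version of the result from Clarke \cite{clarke_book} to the duplicate $\psi$ of $\varphi$ (noting that the partial function of $\psi$ is the duplicate of the corresponding partial function of $\varphi$, and that $\psi$ inherits regularity), and push the inclusion back through $M=M_1\times M_2$; but the direct argument above is shorter.
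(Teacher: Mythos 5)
Your proposal is correct and follows essentially the same route as the paper: take $\zeta=(\zeta_1,\zeta_2)\in\partial^\mu_C\varphi(x_1,x_2)$, test on the directions $(v,0)$ and $(0,v)$, and use regularity via the chain $\langle\zeta_1,v\rangle\le\varphi^\circ((x_1,x_2),(v,0))=\varphi^\prime((x_1,x_2),(v,0))=\varphi_{x_2}^\prime(x_1,v)\le\varphi_{x_2}^\circ(x_1,v)$. Your write-up is in fact slightly more careful than the paper's at the last link (the paper asserts $\varphi_{x_2}^\prime=\varphi_{x_2}^\circ$ there, whereas only the inequality $\le$ is justified and needed), and your preliminary remarks on the product Minkowski functional and the dual of the product are harmless elaborations of what the paper leaves implicit.
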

\begin{proof}
Let $\zeta = (\zeta_1,\zeta_2) \in \partial^\mu_C \varphi(x_1,x_2)$. It is enough to show that $\zeta_1 \in \partial_{x_2} \varphi(x_1)$. Since $\varphi$ is regular,
$$ \left\langle \zeta_1, v\right\rangle \leq \varphi^\circ((x_1,x_2),(v,0)) = \varphi^\prime((x_1,x_2),(v,0)) = \varphi_{x_2}^\prime(x_1,v) = \varphi_{x_2}^\circ(x_1,v);$$
hence, the inclusion in \eqref{eq:partial} holds.
\end{proof}


\section{The subdifferential of convex functions}\label{section_convex_subd}
In this section we combine the results from the previous sections to obtain the subdifferential calculus for a convex function with non-empty relative algebraic interior of its domain.

Let $S$ be a convex subset of a real vector space such that with $\icr S \neq \emptyset$, $\varphi: S \rightarrow \mathbb{R}$ be a convex function. Let us fix any real number $A \geq \varphi(x_0)$, where $x_0$ is an arbitrary point from $\icr S$, and consider the set $C_A$ , which is constructed in Subsection \ref{lipschitz}. Denote 
\begin{equation}
\mu := \text{ the Minkowski functional of the set } C_A - x_0, \ X_A := \Sp (C_A - x_0).\label{eq:mu_ca}
\end{equation}
Put
\begin{equation}
\lambda (x) := \text{a real number such that }\varphi\text{ is }\mu\text{-Lipschitz on }\lambda(x) (C_A - x_0) + x.\label{eq:lambda}
\end{equation}
By Theorem \ref{loc_lip}, such a real number $\lambda(x)$ exists for every $x \in \icr S$.
As in Section \ref{section_clarke}, each time when a vector topology in $X_A$ is considered, we supply every set $B_{\mu}(x,\lambda(x))$ with the following topology:
\begin{equation}
 U \text{ is open in }B_{\mu}(x,\lambda(x))\ \Leftrightarrow \ U - x \text{ is open in the vector topology considered in } X_A.\label{eq:topology_x}
\end{equation}
\subsection{Regularity of $\varphi$ and the Fenchel subdifferential}
Let $\pi:X_A \rightarrow Y$ be a quotient map. Consider the spaces
\begin{equation}
\begin{split}
 Y&:= X_A/_{\Ker \mu}, \text{ supplied with the norm } \|\pi(x)\| := \mu(x), \ \ x \in X_A,\label{eq:y}\\
\overline{Y}&:= \text{ the completion of the space }Y\text{ that is a Banach space. }
\end{split}
\end{equation}
We introduce the following commutative diagram
$$\xymatrix{ X_A \ar[d]_{m} \ar[r]^{\ \ \pi} &Y \ar[ld]^i\\ \overline{Y}}$$
where $i$ is a linear isometry and $m = i \circ \pi$.
Next, we define a family of functions $\{\psi_x(\cdot) : x \in \icr S \}$ by the rule
\begin{equation}
\psi_x(u) := \lim_{n\rightarrow \infty}\varphi(y_n), \ m(y_n - x)\rightarrow u, \ \{y_n\}_{n=1,2,\ldots}\subset \lambda(x)(C_A -x_0) + x.\label{eq:psi_x} 
\end{equation}
Every function $\psi_x$ is well-defined on $m(\lambda(x)(C_A -x_0))$, is Lipschitz and convex. Finally, we introduce the map $M : (\overline{Y})^* \rightarrow X_A^\prime$ defined by
$$ M(\zeta) := \zeta \circ m, \ \ \zeta \in \overline{Y}.$$
By Lemma \ref{propmaps}, the map $M$ is linear, injective and continuous with respect to $\sigma(X_A^*,X_A)$ and $\sigma((\overline{Y})^*,\overline{Y})$ topologies as soon as $X_A$ is supplied with a vector topology such that $\mu$ is continuous.

For a point $x \in \icr S$, let us consider the Fenchel subdifferential of the function $\psi_x$ on the set $B(0,\lambda(x))$:
\begin{equation}
\partial \psi_x(u) = \{\zeta \in (\overline{Y})^* : \psi_x(u) - \psi_x(v) \geq \left\langle \zeta, u-v \right\rangle \text{ for all } v \in B_{\mu}(0,\lambda(x))\}, \ u \in B(0,\lambda(x)).\label{eq:fenchel}
\end{equation}
and the Clarke subdifferential of the function $\psi_x$:
$$
\partial_{C}\psi_x(u) = \{\zeta \in (\overline{Y})^*: \psi^\prime_x(u,v) \geq \left\langle \zeta, v\right\rangle \text{ for all } v \in X_A\}, \ u \in B(0,\lambda(x)).
$$
As it is known (see, e.g.,  Clarke \cite[Proposition 2.2.7.]{clarke_book}), $\partial_C \psi_x (m(u-x)) = \partial \psi_x (m(u-x))$, \text{$u \in B_{\mu}(x,\lambda(x))$.} Applying the map $M$ to equality \eqref{eq:fenchel}, we obtain that
\begin{equation}\partial_C\varphi(u) = \{\zeta \in X_A^\prime : \varphi(u) - \varphi(v) \geq \left\langle \zeta, v-x \right\rangle \text{ for all } v \in B_{\mu}(x,\lambda(x))\}, \ u \in B_{\mu}(x_0,1). \label{fenchel}\end{equation}
The right side of \eqref{fenchel} can be called the Fenchel subdifferential of the function $\varphi$ at the point $x$ in our case. However, a linear functional $\zeta \in X_A^\prime$ may not be defined on all the elements $x \in \icr S$ and $u \in B_\mu(x,\lambda(x))$, \text{i.e.} the expression
$$ \left\langle \zeta, u-x \right\rangle = \left \langle \zeta, u\right\rangle - \left\langle \zeta,x\right\rangle $$
may not have a sense. To avoid such inconvenience, we use the Clarke subdifferential.

In the next proposition we show that $\varphi$ is regular.
\begin{proposition}\label{regular}
Let $S$ be a convex subset of a real vector space such that $\icr S \neq \emptyset$, $\varphi : S \rightarrow \mathbb{R}$ be a convex function. Let $X_A$ be the space defined in \eqref{eq:mu_ca}. Then for all $x \in \icr S$ and for all $v \in X_A$ the derivatives $\varphi^\circ(x,v)$ and $\varphi^\prime(x,v)$ exist and coincide.
\end{proposition}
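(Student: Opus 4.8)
The plan is to reduce the statement to the classical fact that a convex function on an open convex subset of a normed space is regular (in Clarke's sense), and then transport that fact back through the quotient/completion machinery set up earlier in the section. Concretely, fix $x \in \icr S$ and $v \in X_A$. By Theorem~\ref{loc_lip} (equivalently Corollary~\ref{loc_lip_cor}) there is a real number $\lambda(x) > 0$ as in \eqref{eq:lambda}, so that $\varphi$ is $\mu$-Lipschitz on $\lambda(x)(C_A - x_0) + x$; in particular the duplicate function $\psi_x$ of \eqref{eq:psi_x} is a well-defined Lipschitz \emph{and convex} function on the open ball $B(0,\lambda(x)) \subset \overline{Y}$. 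Since $\overline{Y}$ is a Banach space, Clarke \cite[Proposition 2.2.7]{clarke_book} (or the analogous statement in \cite{zalinescu}) gives that $\psi_x$ is regular at every point of $B(0,\lambda(x))$: the one-sided directional derivative $\psi_x'(u,\cdot)$ exists and equals $\psi_x^\circ(u,\cdot)$.

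Next I would pull this down to $\varphi$ via the map $m = i\circ\pi$ of the commutative diagram. The key tool is Lemma~\ref{equality_of_gdd}, which identifies $\varphi^\circ(x,v)$ with $\psi_x^\circ(m(x-x),m(v)) = \psi_x^\circ(0, m(v))$ (applied with the set $\lambda(x)(C_A - x_0) + x$ in place of $C$, after the obvious rescaling of the Minkowski functional noted in the proof of Theorem~\ref{loc_lip}, namely $\mu_{\alpha(C-p)} = \alpha^{-1}\mu$). For the one-sided derivative I would argue directly: along the ray $t \mapsto x + tv$ one has, for small $t > 0$,
$$\frac{\varphi(x+tv) - \varphi(x)}{t} = \frac{\psi_x(t\,m(v)) - \psi_x(0)}{t},$$
because $m(x+tv - x) = t\,m(v)$ and $x + tv$ lies in the Lipschitz set for $t$ small; letting $t \to 0+$, the right-hand limit exists since $\psi_x$ is convex, so $\varphi'(x,v)$ exists and equals $\psi_x'(0, m(v))$. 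Combining with $\varphi^\circ(x,v) = \psi_x^\circ(0,m(v))$ and the regularity $\psi_x'(0,\cdot) = \psi_x^\circ(0,\cdot)$ gives $\varphi'(x,v) = \varphi^\circ(x,v)$, as required.

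The main obstacle is bookkeeping rather than depth: one must be careful that all the limits defining $\psi_x$, $\psi_x^\circ$, and $\varphi^\circ$ are taken with respect to the \emph{same} net/filter (the $\mu$-filter on $\lambda(x)(C_A-x_0)+x$), that $0 \in \Int m(\lambda(x)(C_A - x_0))$ so that $\psi_x$ is genuinely defined on a neighbourhood of $0$ in $\overline{Y}$, and that the rescaling between the Minkowski functional of $C_A - x_0$ and that of $\lambda(x)(C_A - x_0)$ does not affect the equality of the two derivatives (it only scales both sides by the same positive constant). Since the one-sided derivative existence for convex functions needs only convexity along the segment — not the full Lipschitz hypothesis — even the degenerate direction $v \in \Ker\mu$ causes no trouble: there $\varphi(x+tv) = \varphi(x)$ for small $t$ by the $\mu$-Lipschitz bound, so both derivatives vanish. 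This completes the plan.
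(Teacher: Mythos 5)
Your proposal is correct and follows essentially the same route as the paper: both reduce to the duplicate function $\psi_x$ on the Banach space $\overline{Y}$, invoke the classical regularity of convex Lipschitz functions there, and transfer back via the identity $\varphi'(x,v)=\psi_x'(0,m(v))$ together with Lemma~\ref{equality_of_gdd}. Your version merely spells out the bookkeeping (rescaling of $\mu$, the case $v\in\Ker\mu$) that the paper leaves implicit.
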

\begin{proof}
Let $x \in \icr S$ and $\psi_x$ be the function defined in \eqref{eq:psi_x}. It follows from the definition of the function $\psi_x$ that for every $x \in \icr S$
$$ \psi_x^\prime(m(u-x),m(v)) = \varphi^\prime(x,v), \ u \in B_\mu(x,\lambda(x)), \ v \in X_A,$$
and since $\psi_x^\prime(m(u-x),m(v)) = \psi_x^\circ(m(u-x),m(v)) = \varphi^\circ(x,v)$ (see Lemma \ref{equality_of_gdd}), we obtain that
$$\varphi^\prime(x,v) = \varphi^\circ(x,v),  \ v \in X_A.$$
Thus $\varphi$ is regular on $\icr S$.
\end{proof}
By Proposition \ref{regular}, the Clarke subdifferential of the function $\varphi$ may be written in the form
\begin{equation}
 \partial_C^\mu\varphi(x) = \{\zeta \in X_A^\prime : \varphi^\prime(x,v) \geq \left\langle \zeta, v \right\rangle \text{ for all } v \in X_A\}, \ \ x \in \icr S.\label{eq:clarke_c}
\end{equation}
In the next proposition we establish the ``continuity'' property of $\varphi^\prime(\cdot,\cdot)$. 
\begin{proposition}
Let $S$ be a convex subset of a real vector space such that $\icr S \neq \emptyset$, $\varphi : S \rightarrow \mathbb{R}$ be a convex function. Let $X_A$ be the space defined in \eqref{eq:mu_ca}. Suppose that the space $X_A$ is supplied with a vector topology such that $\mu$ defined in \eqref{eq:mu_ca} is continuous, and let every set $B_\mu(x,\lambda(x))$ be supplied with the topology defined in \eqref{eq:topology_x}, where $\lambda(x)$ is defined in \eqref{eq:lambda}. Then for all $x \in \icr S$ the restriction of the function $\varphi^\prime(\cdot,\cdot)$ to the set $B_{\mu}(x,\lambda(x)) \times X_A$ is continuous.
\end{proposition}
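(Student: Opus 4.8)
The plan is to transport the question, exactly as the earlier propositions of this section do, to the Banach space $\overline{Y}$, where $\varphi^\prime$ becomes the directional derivative of the convex Lipschitz function $\psi_x$ from \eqref{eq:psi_x}. Fix $x \in \icr S$. By Proposition \ref{regular} the function $\varphi$ is regular on $\icr S$, so on $B_\mu(x,\lambda(x)) \times X_A$ one may replace $\varphi^\prime$ by $\varphi^\circ$; and applying Lemma \ref{equality_of_gdd} to the symmetric set $\lambda(x)(C_A-x_0)+x$ and its associated function $\psi_x$ gives
\[
\varphi^\prime(u,v)=\varphi^\circ(u,v)=\psi_x^\circ(m(u-x),m(v))=\psi_x^\prime(m(u-x),m(v))
\]
for all $u\in B_\mu(x,\lambda(x))$ and $v\in X_A$, where the final equality uses that $\psi_x$, being convex and Lipschitz on the open ball $B(0,\lambda(x))$ of $\overline{Y}$, is itself regular there.

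Next I would pass the topology through $m$. Since $\mu$ is continuous, the quotient map $\pi$ and hence $m=i\circ\pi$ is a continuous linear map from $X_A$ into $\overline{Y}$, with $\|m(u-x)\|=\mu(u-x)<\lambda(x)$, so that $m(u-x)\in B(0,\lambda(x))$. Because $B_\mu(x,\lambda(x))$ carries the topology \eqref{eq:topology_x}, the translation $u\mapsto u-x$ is a homeomorphism onto a neighbourhood of the origin in $X_A$, and therefore the map $(u,v)\mapsto(m(u-x),m(v))$ is continuous from $B_\mu(x,\lambda(x))\times X_A$ into $B(0,\lambda(x))\times\overline{Y}$. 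By the displayed identity, continuity of $\varphi^\prime$ on $B_\mu(x,\lambda(x))\times X_A$ follows as a composition once one establishes that $\psi_x^\prime(\cdot,\cdot)$ is continuous on $B(0,\lambda(x))\times\overline{Y}$; this reduction is the same device used throughout the section and is routine.

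The heart of the argument, and the step I expect to be hardest, is the joint continuity of $\psi_x^\prime$ in the Banach space, which splits into two inequalities. Upper semi-continuity of $\psi_x^\prime=\psi_x^\circ$ is Clarke \cite[Proposition 2.1.1.]{clarke_book} (equivalently Proposition \ref{upsgdd} transported back), and continuity of $\psi_x^\prime(u,\cdot)$ in the direction variable for fixed $u$ is supplied by the sublinearity and Lipschitz estimate of Proposition \ref{prop_alg}. The remaining, delicate, half is joint lower semi-continuity; here I would use convexity. The difference quotients $q_t(u,w):=t^{-1}\bigl(\psi_x(u+tw)-\psi_x(u)\bigr)$ are jointly continuous (by the Lipschitz property of $\psi_x$), monotone in $t$, and satisfy $\psi_x^\prime(u,w)=\inf_{t>0}q_t(u,w)$; combined with the uniform Lipschitz bound on a neighbourhood, this is what must force $\liminf\psi_x^\prime(u_n,w_n)\ge\psi_x^\prime(u,w)$. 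Controlling this infimum from below is the main obstacle, since a monotone infimum of continuous functions is only upper semi-continuous a priori, so the lower estimate has to be wrung out of convexity together with the two-sided Lipschitz control rather than from the monotone limit alone. Once the lower bound is secured, the two inequalities yield continuity of $\psi_x^\prime$, and the continuous substitution through $m$ transfers it to $\varphi^\prime$ on $B_\mu(x,\lambda(x))\times X_A$.
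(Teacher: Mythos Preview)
Your proposal correctly isolates the crux—the joint lower semi-continuity of $\varphi'$—but then leaves it unresolved: you describe the difference quotients $q_t$, note that their infimum is only upper semi-continuous a priori, and then say the lower bound ``has to be wrung out'' of convexity and the Lipschitz estimate without showing how. The paper bypasses this entirely and works directly with $\varphi'$ on $X_A$; no passage to $\overline Y$ is needed for this step. It obtains upper semi-continuity of $\varphi'=\varphi^\circ$ on $B_\mu(x,\lambda(x))\times X_A$ from Proposition~\ref{upsgdd} together with Proposition~\ref{regular}, and then gets lower semi-continuity by a one-line negation trick:
\[
\liminf_{\alpha}\varphi'(u_\alpha,v_\alpha)=-\limsup_{\alpha}\varphi'(u_\alpha,-v_\alpha)\ge -\varphi'(u,-v)=\varphi'(u,v),
\]
the inequality being upper semi-continuity applied at the point $(u,-v)$, and the final equality coming from the paper's assertion that $\varphi'(u,\cdot)$ is \emph{linear} in the second argument. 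So the device you are missing is simply: use oddness of $v\mapsto\varphi'(u,v)$ to convert the lower estimate at $(u,v)$ into the already-established upper estimate at $(u,-v)$.

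It is worth flagging, however, that the equality $-\varphi'(u,-v)=\varphi'(u,v)$ is the entire content of the paper's linearity claim, and for a general convex function the directional derivative is only sublinear in the direction, not linear. Already for $\varphi(t)=|t|$ on $\mathbb{R}$ one has $\varphi'(0,v)=|v|$, and $\varphi'(-1/n,1)=-1\not\to\varphi'(0,1)=1$, so the stated joint continuity fails at nonsmooth points. Your difficulty in producing the lower bound by monotone limits is therefore not merely technical: without the additional hypothesis that $\varphi'(u,\cdot)$ is linear (equivalently, that $\partial^\mu_C\varphi(u)$ is a singleton) at the relevant points, the conclusion as stated does not hold, and no amount of Lipschitz control will repair it.
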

\begin{proof}
Let $x \in \icr S$. By Proposition \ref{upsgdd}, the function $\varphi^\circ(\cdot,\cdot)$ is upper semi-continuous in \text{$B_{\mu}(x,\lambda(x)) \times X_A$}. By Proposition \ref{regular},  $\varphi^\prime(\cdot,\cdot)$ is upper semi-continuous in $B_{\mu}(x,\lambda(x))\times X_A$. Since $\varphi^\prime(\cdot,\cdot)$ is linear in the second argument, it is continuous. Indeed, let $\{(u_\alpha, v_\alpha)\}_{\alpha \in A} \subset B_{\mu}(x,\lambda(x))\times X_A$ be a net converging to $(u,v) \in B_{\mu}\times X_A$. Then
$$\liminf_{\substack{u_\alpha \rightarrow u,\\ v_\alpha \rightarrow v}}\varphi^\prime(u_\alpha,v_\alpha) = - \limsup_{\substack{u_\alpha \rightarrow u,\\ v_\alpha \rightarrow v}}\varphi^\prime(u_\alpha,-v_\alpha) \geq -\varphi^\prime(u,-v) = \varphi^\prime(u,v).\qedhere$$
Thus $\varphi^\prime(\cdot,\cdot)$ is continuous in $B_{\mu}(x,\lambda(x)) \times X_A$.
\end{proof}
\subsection{Some properties of $\partial^\mu_C\varphi(\cdot)$}
In the next proposition it is gathered some properties of the Clarke subdifferential.
\begin{proposition}
Let $\partial^\mu_C \varphi (\cdot) : \icr S \rightrightarrows X_A^\prime$ be the multivalued map defined in \eqref{eq:clarke_c}. Then $\partial^\mu_C \varphi(\cdot)$ has non-empty convex values. If additionally the space $X_A$ defined in \eqref{eq:mu_ca} is supplied with a vector topology such that $\mu$ defined in \eqref{eq:mu_ca} is continuous, then $\partial^\mu_C \varphi(x) \in X_A^*$ for every $x \in \icr S$ and $\partial^\mu_C \varphi(\cdot)$ has compact values with respect to $\sigma(X_A^*,X_A)$ topology.
\end{proposition}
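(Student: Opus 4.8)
The plan is to reduce to the setting of Section~\ref{section_clarke} by localisation and rescaling, and then invoke Proposition~\ref{subd_gen_properties}. Fix a point $x \in \icr S$. By Theorem~\ref{loc_lip} (equivalently Corollary~\ref{loc_lip_cor}) the number $\lambda(x)$ from \eqref{eq:lambda} exists, so $\varphi$ is $\mu$-Lipschitz on the set $C' := \lambda(x)(C_A - x_0) + x$. This set is convex and symmetric with respect to the point $x$, and, using the scaling identity $\mu_{\alpha(C_A-x_0)}(u) = \alpha^{-1}\mu(u)$ recorded in the proof of Theorem~\ref{loc_lip}, its associated Minkowski functional is $\nu := \mu_{C'-x} = \lambda(x)^{-1}\mu$. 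Moreover $\Sp(C'-x) = \Sp(C_A - x_0) = X_A$, so all the objects $X_0, Y, \overline Y, m, M$ attached to the triple $(C', x, \nu)$ in Section~\ref{section_clarke} coincide with those built in the present section. A $\mu$-Lipschitz constant $L$ of $\varphi$ on $C'$ becomes a $\nu$-Lipschitz constant $\lambda(x) L$, so $\varphi|_{C'}$ falls exactly under the hypotheses of Section~\ref{section_clarke}.

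Next I would identify the two subdifferentials. The Clarke subdifferential of $\varphi|_{C'}$ built in \eqref{eq:clarke_df} for the data $(C', x, \nu)$ is
\[
\partial^\nu_{C'}\varphi(x) = \{\zeta \in X_A^\prime : \varphi^\circ(x,v) \geq \langle \zeta, v\rangle \text{ for all } v \in X_A\},
\]
where the generalized directional derivative is the $\limsup$ over $\nu(y-x)\to 0$, $t\to 0+$. Since $\nu = \lambda(x)^{-1}\mu$, the filter $\nu(y-x)\to 0$ is the same as the filter $\mu(y-x)\to 0$, so this $\varphi^\circ(x,\cdot)$ agrees with the one used throughout the section; and by Proposition~\ref{regular} it equals $\varphi^\prime(x,\cdot)$. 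Hence $\partial^\nu_{C'}\varphi(x)$ is exactly the set $\partial^\mu_C\varphi(x)$ of \eqref{eq:clarke_c}. Note also that $x$ lies in the ball $B_\nu(x,1)$, since $\nu(x-x)=0<1$, so $\partial^\nu_{C'}\varphi(x)$ is indeed defined.

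Finally I would apply Proposition~\ref{subd_gen_properties} to the triple $(C',x,\nu)$ at the point $x$: it yields that $\partial^\nu_{C'}\varphi(x)$ is non-empty and convex with no topological hypothesis, and, when $X_A$ carries a vector topology making $\mu$ (equivalently $\nu$) continuous, that every element of it is $\mu$-bounded, hence continuous, so $\partial^\mu_C\varphi(x) \subset X_A^*$, and that it is compact in $\sigma(X_A^*, X_A)$. Since this set is $\partial^\mu_C\varphi(x)$ and $x \in \icr S$ was arbitrary, the proposition follows. The only delicate point is the bookkeeping of the rescaling — verifying that replacing $C_A$ by the localised set $C'$ leaves $X_A$, the maps $m$ and $M$, and the generalized directional derivative unchanged (only the Lipschitz constant is rescaled) — after which the statement is a verbatim consequence of Proposition~\ref{subd_gen_properties}.
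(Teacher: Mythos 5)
Your proposal is correct and follows essentially the same route as the paper: the paper's proof is a one-line appeal to Proposition \ref{subd_gen_properties} applied on each set $B_\mu(x,\lambda(x))$, and your argument simply spells out the localisation and rescaling bookkeeping that this appeal tacitly relies on. The only cosmetic imprecision is the claim that $\overline Y$ literally coincides for the rescaled gauge $\nu=\lambda(x)^{-1}\mu$ (its norm is rescaled), but this changes nothing about the dual, the weak-$*$ topology, or the conclusion.
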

\begin{proof}
It follows from Proposition \ref{subd_gen_properties} that $\partial^\mu_C\varphi(\cdot)$ has the mentioned properties on every set $B_\mu(x,\lambda(x))$, where $x \in \icr S$ and $\lambda(x)$ is defined in \eqref{eq:lambda}. Thus the statement holds.
\end{proof}
The next proposition describes some kind of ``upper semi-continuity'' property of $\partial^\mu_C \varphi (\cdot)$. 
\begin{proposition}\label{usc_icr}
Let $\partial^\mu_C \varphi (\cdot) : \icr S \rightrightarrows X_A^\prime$ be the multivalued map defined in \eqref{eq:clarke_c}. Suppose that $X_A$ is supplied with a vector topology such that $\mu$ is continuous and let $B_\mu(x,\lambda(x))$ be supplied with the topology defined in \eqref{eq:topology_x} for every $x \in \icr S$, where $\lambda(x)$ is defined in \eqref{eq:lambda}. Then the restriction of $\partial^\mu_C\varphi(\cdot)$ to every set $B_\mu(x,\lambda(x))$ is upper-semicontinuous with respect to the topology $\sigma(X_A^*,X_A)$ and the topology in $B_\mu (x,\lambda(x))$.
\end{proposition}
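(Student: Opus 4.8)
The plan is to reduce the statement to Proposition \ref{usc}, which already provides upper semi-continuity of the Clarke subdifferential of a function that is $\mu$-Lipschitz on a convex set symmetric with respect to a point. Fix $x \in \icr S$, write $\lambda := \lambda(x)$ as in \eqref{eq:lambda}, and introduce the auxiliary set $C^\prime := \lambda(C_A - x_0) + x$. Then $C^\prime$ is convex, and since $C_A - x_0$ is symmetric we have $C^\prime - x = \lambda(C_A - x_0) = -(C^\prime - x)$, so $C^\prime$ is symmetric with respect to $x$. Its Minkowski functional is $\mu^\prime := \mu_{C^\prime - x} = \lambda^{-1}\mu$; hence $\Sp(C^\prime - C^\prime) = \Sp(C_A - C_A) = X_A$, and $\mu^\prime$ is continuous for the given vector topology on $X_A$ exactly when $\mu$ is. Finally, by the defining property of $\lambda(x)$ the restriction $\varphi|_{C^\prime}$ is $\mu$-Lipschitz, hence $\mu^\prime$-Lipschitz (with the constant multiplied by $\lambda$).

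Next I would identify the data attached to $(C^\prime,\mu^\prime,\varphi|_{C^\prime})$ through Section \ref{section_clarke} with those built from $(C_A,\mu,\varphi)$ on the overlap. As a set, $B_{\mu^\prime}(x,1) = \{y \in X_A : \lambda^{-1}\mu(y-x) < 1\} = B_\mu(x,\lambda(x))$, and the topology \eqref{eq:topology} placed on it (``$U$ open $\Leftrightarrow$ $U - x$ open in $X_A$'') is verbatim the topology \eqref{eq:topology_x}. The generalized directional derivative of Section \ref{section_clarke} formed with $\mu^\prime$ coincides with the one formed with $\mu$, because $\limsup$ taken over $\mu^\prime(y-x)\to 0$ uses the same filter as $\limsup$ over $\mu(y-x)\to 0$; by Proposition \ref{regular} both equal $\varphi^\prime(x,\cdot)$. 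Consequently the Clarke subdifferential of \eqref{eq:clarke_c} restricted to $B_\mu(x,\lambda(x))$ agrees with $\partial^{\mu^\prime}_{C^\prime}\varphi(\cdot)$ on $B_\mu(x,\lambda(x))$.

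With these identifications, Proposition \ref{usc} applied to $C^\prime$, its Minkowski functional $\mu^\prime$ (continuous, since $\mu$ is) and the $\mu^\prime$-Lipschitz function $\varphi|_{C^\prime}$ gives that $\partial^{\mu^\prime}_{C^\prime}\varphi(\cdot)$ is upper semi-continuous from $B_{\mu^\prime}(x,1)$, with the topology \eqref{eq:topology}, into $X_A^*$ equipped with $\sigma(X_A^*,X_A)$. Translating back through the identifications above yields upper semi-continuity of the restriction of $\partial^\mu_C\varphi(\cdot)$ to $B_\mu(x,\lambda(x))$, which is the assertion.

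I expect the only genuine work to be the bookkeeping of the second paragraph: checking that rescaling $\mu$ by the positive constant $\lambda(x)^{-1}$ and replacing the reference symmetric set $C_A$ by the translate-and-dilate $C^\prime$ leaves both the generalized directional derivative and the resulting subdifferential unchanged on the common domain, and that the prescriptions \eqref{eq:topology} and \eqref{eq:topology_x} topologize the ball in the same way. Once that is in place, the statement is an immediate instance of Proposition \ref{usc} and requires no new estimate.
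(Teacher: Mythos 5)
Your proposal is correct and follows the same route as the paper, which simply invokes Proposition \ref{usc} for each ball $B_\mu(x,\lambda(x))$; you merely spell out the bookkeeping (passing to $C' = \lambda(x)(C_A-x_0)+x$, the rescaled gauge, and the identification of the topologies and of $\varphi^\circ$) that the paper leaves implicit.
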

\begin{proof}
It follows from Proposition \ref{usc} that for every $x \in \icr S$ the restriction of the multivalued map $\partial^\mu_C\varphi(\cdot)$ to the set $B_\mu(x,\lambda(x))$ is upper-semicontinuous. Thus the statement holds.
\end{proof}
\begin{theorem}\label{ferma_global}
(Fermat's rule) Let $S$ be a subset of a real vector space such that $\icr S \neq \emptyset$ and let \text{$\varphi :S\rightarrow \mathbb{R}$} be a convex function. Let $C_A$ be the set defined in \eqref{eq:c_a} and $x_0$ be a point such that $C_A-x_0 = -(C_A - x_0)$.
Next, suppose that there exists $\varepsilon > 0$ and a point $u \in S$ such that \text{$\varepsilon(C_A - x_0) + u \subset S$} and the restriction of $\varphi$ to $\varepsilon(C_A - x_0) + u$ attains a minimum or a maximum at the point $u$. Then $\varphi^\prime(u,v) = 0$ for all $v \in X_A$ and $\partial^\mu_{C} \varphi(u) \ni 0$.
\end{theorem}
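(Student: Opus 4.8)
The plan is to reduce the statement to the $\mu$-Lipschitz Fermat rule, Proposition \ref{fermat}, using that a convex function is regular on $\icr S$ (Proposition \ref{regular}), so that $\varphi^\circ(u,\cdot)$ and $\varphi^\prime(u,\cdot)$ coincide. Since $C_A-x_0$ is absorbing in $X_A$ by Lemma \ref{absorb}, for any $\varepsilon^\prime\in(0,\varepsilon)$ the set $C^\prime:=\varepsilon^\prime(C_A-x_0)+u$ is a convex subset of $S$ symmetric with respect to $u$, and the Minkowski functional of $C^\prime-u$ equals $(\varepsilon^\prime)^{-1}\mu$ on $X_A$; hence $\partial^\mu_C$ and $\partial^{(\varepsilon^\prime)^{-1}\mu}_{C^\prime}$ cut out the same subset of $X_A^\prime$, namely the one described by \eqref{eq:clarke_c}. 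So it will suffice to exhibit $\varepsilon^\prime$ for which $\varphi$ is $\mu$-Lipschitz on $C^\prime$ and then apply Proposition \ref{fermat} on $C^\prime$ at the point $u$ with a slightly smaller radius. I would then treat the two alternatives of the hypothesis separately.

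\emph{Maximum case.} If $\varphi$ attains a maximum on $K:=\varepsilon(C_A-x_0)+u$ at $u$, then for each $k\in K$ symmetry gives $2u-k\in K$ and convexity gives $\varphi(u)\le\tfrac12\varphi(k)+\tfrac12\varphi(2u-k)\le\varphi(u)$; since both $\varphi(k)$ and $\varphi(2u-k)$ are $\le\varphi(u)$, this forces $\varphi(k)=\varphi(u)$. Thus $\varphi$ is constant on $K$, hence $\mu$-Lipschitz there with constant $0$; Proposition \ref{fermat} applied on any such $C^\prime\subset K$ yields $0\in\partial^\mu_C\varphi(u)$, and since $u+tv\in K$ for every $v\in X_A$ and all small $t>0$ one also gets $\varphi^\prime(u,v)=0$ directly.

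\emph{Minimum case.} If $\varphi$ attains a minimum on $K$ at $u$, fix $v\in X_A$. By Lemma \ref{absorb} there is $\tau>0$ with $\pm tv\in\varepsilon(C_A-x_0)$ for $0\le t\le\tau$, so $u\pm tv\in K\subset S$ and $\varphi$ is finite and convex on this segment; therefore $t\mapsto(\varphi(u+tv)-\varphi(u))/t$ is non-decreasing on $(0,\tau]$, the right derivative $\varphi^\prime(u,v)=\inf_{0<t\le\tau}(\varphi(u+tv)-\varphi(u))/t$ exists and is finite, and each quotient is $\ge0$ because $\varphi(u+tv)\ge\varphi(u)$. Hence $\varphi^\prime(u,v)\ge0$ for all $v\in X_A$, which by \eqref{eq:clarke_c} is exactly $0\in\partial^\mu_C\varphi(u)$; alternatively, once $\varphi$ is known to be $\mu$-Lipschitz on some $C^\prime$ one may simply invoke Proposition \ref{fermat} together with Proposition \ref{regular}. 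Strictly speaking in this case one only obtains $\varphi^\prime(u,v)\ge0=\langle0,v\rangle$, which is the content of $0\in\partial^\mu_C\varphi(u)$; the sharper equality $\varphi^\prime(u,v)=0$ of the statement holds in the maximum case by the constancy argument.

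The step I expect to be the main obstacle is the reduction to Proposition \ref{fermat} in the minimum case, i.e.\ producing $\varepsilon^\prime$ with $\varphi$ genuinely $\mu$-Lipschitz on $\varepsilon^\prime(C_A-x_0)+u$: the boundedness-above argument underlying Theorem \ref{th_main} and Theorem \ref{loc_lip} is run from a point of $\icr S$, whereas here $u$ is only ``algebraically interior'' in the $X_A$-directions and need not lie in $\icr S$. I would circumvent this by passing to the affine slice $S\cap(u+X_A)$: on it $\varphi$ is convex, and $u$ lies in its relative algebraic interior because $\varepsilon(C_A-x_0)+u\subset S\cap(u+X_A)$ while $C_A-x_0$ is absorbing in $X_A$, so Theorem \ref{loc_lip} and Corollary \ref{loc_lip_cor} apply verbatim inside the slice and supply the needed local $\mu$-Lipschitz property; for the minimum case the elementary convexity computation above even makes the Lipschitz machinery unnecessary.
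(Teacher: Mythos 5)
Your proof is essentially the paper's route --- the paper's entire argument is the single sentence ``The proof follows from Proposition \ref{fermat}'' --- but you fill in two real gaps that the paper passes over in silence, and both of your observations are correct. First, Proposition \ref{fermat} requires $\varphi$ to be $\mu$-Lipschitz on a symmetric set around $u$, and since $u$ is only assumed to satisfy $\varepsilon(C_A-x_0)+u\subset S$ (so $u$ need not lie in $\icr S$, and $u-x_0$ need not even lie in $X_A$), neither Theorem \ref{loc_lip} nor Corollary \ref{loc_lip_cor} applies at $u$ off the shelf; your passage to the slice $S\cap(u+X_A)$ is the right idea, though note that to get Lipschitzness with respect to the \emph{original} $\mu$ (rather than the gauge of a new set $C'_{A'}$ built inside the slice) you still need $\varphi$ bounded above on some $\delta(C_A-x_0)+u$, which is immediate in the maximum case and requires a word in the minimum case --- your elementary convexity computation sidesteps this entirely for the directional-derivative conclusion. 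Second, and more importantly, you are right that the stated conclusion $\varphi^\prime(u,v)=0$ does not follow in the minimum case: Proposition \ref{fermat} only delivers $\varphi^\circ(u,v)\geq 0$, and by regularity (Proposition \ref{regular}) this is $\varphi^\prime(u,v)\geq 0$, which is exactly $0\in\partial^\mu_C\varphi(u)$ but not the equality. Indeed $\varphi(x)=|x|$ on $S=\mathbb{R}$ with $x_0=0$, $A=1$, $C_A=[-1,1]$, $u=0$ is a counterexample to the equality as printed, so the theorem's conclusion should read $\varphi^\prime(u,v)\geq 0$; your constancy argument correctly shows that equality does hold in the maximum case. In short: same skeleton as the paper, but your version is the one that actually compiles, and it exposes an overclaim in the statement that the paper's one-line proof conceals.
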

\begin{proof}
The proof follows from Proposition \ref{fermat}.
\end{proof}
It is shown in Counterexample \ref{cntx_fermat} that the inclusion $\partial^\mu_C \varphi(u)\ni 0$ doesn't guarantee that the point $u$ is a global minimum point of the convex function $\varphi$. 
\section{Examples and counterexamples}\label{examples}
In Subsection \ref{subs_clarke} it is provided a simple illustration of the main idea. In Subsection \ref{examples_calculus} there are examples of the calculus of $\partial^\mu_C$. Subsection \ref{c_d} concerns a few counterexamples to \text{Theorem \ref{th_main}.}
\subsection{An illustration of the main idea}\label{subs_clarke}

\begin{example}\label{example_conv}
Let $L_2[0,1]$ be the standard space of all square-integrable functions with respect to the Lebesgue measure on $[0,1]$. Let us consider the following function
\begin{equation}\varphi(x) := \begin{cases} \int_0^1 \frac{(x(t) - t)^2 }{t}dt \ \ \ \text{if }x \geq -1 \ \text{a.e.} \\ +\infty  \ \ \ \text{otherwise},
\end{cases}\label{example1}\end{equation}
where a.e. means almost everywhere and $x \in L_2[0,1]$. 
It is clear that the function $\varphi$ 
is a non-negative convex function and attains its global minimum at the function $f \in L_2[0,1]$ defined by
\begin{equation}\label{minfunction}
f(t) := t, \ \ t \in [0,1].
\end{equation}
However, $\Int \Dom \varphi = \emptyset$, since $\varphi$ attains $+\infty$, for example, at every function of the form
\begin{equation}
 \alpha \chi_{[0,\beta]} (t)  := \begin{cases} \alpha, \ \ t \in [0,\beta] \\ 0, \ \ t \notin [0,\beta],\end{cases} \label{eq:chi}
\end{equation}
where $\alpha$ is an arbitrary non-zero real number and $\beta$ is an arbitrary number from $(0,1]$. Therefore, there are no guarantees that the Fenchel subdifferential or the Clarke subdifferential of the function $\varphi$ is non-empty. Furthermore, $\icr \Dom \varphi = \emptyset$, which is perhaps less obvious and will be carefully discussed in Proposition \ref{emptyicr}. Shortly saying, the reason is in functions from $L_2[0,1] \setminus L^\infty [0,1]$, where $L^\infty [0,1]$ is the space of essentially bounded measurable functions defined on $[0,1]$; therefore, we will consider a narrowing of the domain of the function $\varphi$ to the set
\begin{equation}
S := L^\infty[0,1] \cap \Dom \varphi.\footnote{One may consider, e.g., the set of polynomials that has a root at the point $0$ and are satisfying the condition $x \geq -1$ a.e. However, we are interested in finding as bigger restriction as possible.}\label{the_set_s}
\end{equation}
Note  that $\Int S = \emptyset$ even in $L^\infty$-topology (due to the functions defined in \eqref{eq:chi}). However, as it will be shown in Lemma \ref{nonemptys}, $f \in \icr S$, where $f$ is defined in \eqref{minfunction}. This inclusion is enough to use the developed theory, i.e. to find a non-empty subdifferential and a Minkowski functional that yields Lipschitz property. This will be done in Proposition \ref{lipschlemma} and in Proposition \ref{ex1_cl}.
\end{example}
\begin{proposition}\label{emptyicr}
Let $\varphi$ be the function defined in \eqref{example1}. Then $\icr \Dom \varphi = \emptyset$.
\end{proposition}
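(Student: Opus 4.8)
The plan is to prove the stronger statement that \emph{no} point of $\Dom\varphi$ lies in $\icr\Dom\varphi$; since $\icr\Dom\varphi\subseteq\Dom\varphi$, this yields $\icr\Dom\varphi=\emptyset$. Write $f$ for the function $f(t)=t$ of \eqref{minfunction} and put $H:=\{g\in L_2[0,1]:\int_0^1 g(t)^2/t\,dt<\infty\}$, a linear subspace of $L_2[0,1]$ (indeed $H\subseteq L_2[0,1]$ since $1/t\geq 1$ on $(0,1]$). Then $x\in\Dom\varphi$ precisely when $x\geq-1$ a.e.\ and $x-f\in H$. Fix $x\in\Dom\varphi$. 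I will construct $h\in H$ with $h\geq0$ such that, with $y:=-h$, one has $x+\tau y\notin\Dom\varphi$ for \emph{every} $\tau>0$. This $y$ belongs to $\Sp(\Dom\varphi-\Dom\varphi)$: both $f$ and $f+h$ lie in $\Dom\varphi$ (each is $\geq-1$ since $h\geq0$, and $\int_0^1 h^2/t\,dt<\infty$), so $h=(f+h)-f\in\Dom\varphi-\Dom\varphi$ and hence $y\in\Sp(\Dom\varphi-\Dom\varphi)$. Observe also that the only way $x+\tau y$ can leave $\Dom\varphi$ is by violating the pointwise constraint $x+\tau y\geq-1$ a.e., for the weighted integrability is automatic: $(x+\tau y)-f=(x-f)-\tau h\in H$. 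Thus the task reduces to making $h$ so large, in supremum, on small sets accumulating at the origin that $x-\tau h<-1$ on a set of positive measure for every $\tau>0$.

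For the construction I use only $g:=x-f\in H$. Let $I_n:=[2^{-n-1},2^{-n}]$, so the $I_n$ are disjoint and $\sum_n\int_{I_n}g^2/t\,dt\leq\int_0^1 g^2/t\,dt<\infty$; fix $N$ with $\int_{I_n}g^2/t\,dt\leq1$ for all $n\geq N$. Since $1/t\geq2^n$ on $I_n$, we get $\int_{I_n}g^2\,dt\leq2^{-n}\int_{I_n}g^2/t\,dt\leq2^{-n}$, and Chebyshev's inequality gives $|\{t\in I_n:|g(t)|>2\}|\leq\tfrac{1}{4}\int_{I_n}g^2\,dt\leq2^{-n-2}=\tfrac{1}{2}|I_n|$. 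Hence for each $n\geq N$ we may choose a measurable set $E_n\subseteq\{t\in I_n:|g(t)|\leq2\}$ with $|E_n|=16^{-n}$. Set $h:=\sum_{n\geq N}2^n\chi_{E_n}$ and $y:=-h$. Using $1/t\leq2^{n+1}$ on $E_n\subseteq I_n$, we obtain $\int_0^1 h^2/t\,dt=\sum_{n\geq N}4^n\int_{E_n}dt/t\leq\sum_{n\geq N}4^n\cdot2^{n+1}\cdot16^{-n}=2\sum_{n\geq N}2^{-n}<\infty$, so $h\in H$, while $h$ is unbounded. Finally, on $E_n$ we have $0\leq x(t)+1=t+g(t)+1\leq\tfrac{1}{2}+2+1\leq4$, so given $\tau>0$, for every $n\geq N$ with $2^n\tau>4$ and every $t\in E_n$ one gets $x(t)+\tau y(t)=x(t)-\tau 2^n<x(t)-(x(t)+1)=-1$; since $\bigcup\{E_n:n\geq N,\ 2^n\tau>4\}$ has positive measure, $x+\tau y\notin\Dom\varphi$. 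As $\tau>0$ was arbitrary and $y\in\Sp(\Dom\varphi-\Dom\varphi)$, the point $x$ is not in $\icr\Dom\varphi$, and since $x\in\Dom\varphi$ was arbitrary the proposition follows.

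The one genuinely delicate point is the quantitative trade-off in the definition of $h$: on $I_n$ the weight $1/t$ has order $2^n$, so a value $2^n$ of $h$ contributes roughly $4^n\cdot 2^n\cdot|E_n|$ to $\int_0^1 h^2/t\,dt$; for this to be summable one needs $|E_n|$ to decay faster than $8^{-n}$, which is why $|E_n|=16^{-n}$ is taken. Any attempt to keep $|E_n|$ comparable to $|I_n|$, or to let $h$ be constant on all of $I_n$, is bound to fail, because a sequence of heights that is square-summable against the weight but lives on sets of comparable size cannot be unbounded. The remaining ingredients — the tail estimate $\int_{I_n}g^2/t\,dt\to0$ together with the Chebyshev step that places $E_n$ inside $\{|g|\leq2\}$, and the verification $y\in\Sp(\Dom\varphi-\Dom\varphi)$ — are routine.
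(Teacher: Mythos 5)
Your proof is correct, and it takes a genuinely different route from the paper's. The paper argues by contradiction: given $x\in\icr\Dom\varphi$, it finds a set $A\subset[0.5,1]$ of positive measure on which $|x|\leq C$, picks a Lebesgue density point $s$ of $A$, and uses as the ``bad direction'' the explicit continuous function $t\mapsto t+\sqrt{t}\,/\sqrt[4]{|s-t|}$, which lies in $\Dom\varphi$ but blows up at $s$; the condition $x-\alpha(\cdot)\geq-1$ then forces $x>C$ on a positive-measure subset of $A$ near $s$, a contradiction. You instead build the bad direction by hand, adapted to $x$: a nonnegative unbounded step function $h=\sum 2^n\chi_{E_n}$ supported on tiny sets near the origin, with the Chebyshev step guaranteeing $E_n$ sits where $x-f$ is bounded, and with $|E_n|=16^{-n}$ tuned so that $h$ stays in the weighted $L_2$ space; you then verify directly that $x-\tau h<-1$ on a positive-measure set for every $\tau>0$. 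Your argument is more quantitative but also more elementary --- it avoids the Lebesgue density theorem entirely, needing only Chebyshev's inequality and the fact that a positive-measure set contains subsets of any prescribed smaller measure --- whereas the paper's single universal family of blow-up functions (parametrized by the density point $s$) makes for a shorter write-up once the density machinery is granted. Both proofs correctly reduce membership in $\Dom\varphi$ to the pointwise constraint $x\geq-1$ a.e.\ (the weighted integrability being preserved under the chosen perturbations), and both establish the stronger fact that no point of $\Dom\varphi$ is in the relative algebraic interior.
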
 
\begin{proof}
Assume, to the contrary, that there exists a function $x$ that belongs to $\icr \Dom \varphi$. Let $dx$ be the Lebesgue measure on $[0,1]$. Since $x$ is measurable, there exists $C > 0$ such that $$dx(\{t\in[0.5,1] : |x(t) | \leq C\})>0.$$ Denote 
$$ A:= \{t \in [0.5,1] : |x(t) | \leq C\}.$$
Since $dx(A) > 0$, there exists a density point $s \in A$ (see, e.g., Bogachev \cite[p. 366]{bogachev_measure}), which has the following property: for every neighbourhood $U_s$ of the point $s$ the inequality $dx(U_s \cap A) > 0$ holds. It is easy to check that the function
\begin{equation}
 [0,1] \ni t \mapsto t + \frac{\sqrt t}{\sqrt[4]{|s-t|}} \in \mathbb{R}\cup\{+\infty\}  \label{eq:st}
\end{equation}
belongs to  $\Dom \varphi$. Since $x \in \icr \Dom \varphi$, there exists $\alpha > 0$ such that 
\begin{equation} x(t) - \alpha \left(t +\frac{\sqrt t}{\sqrt[4]{|s-t|}}\right) \geq -1 \ \text{ a.e. }\label{eq:ineq}\end{equation}
Since the function in \eqref{eq:st} is continuous, there exists an open neighbourhood $U_s$ of the point $s$ such that
$$-1 + \alpha\left(t +\frac{\sqrt t}{\sqrt[4]{|s-t|}}\right) > C \ \text{ for all } t \in U_s. $$
Therefore, for almost all $t \in A \cap U_s$ the inequality $x(t) > C$ holds. However, $dx(A\cap U_s) > 0$, hence the inequality in \eqref{eq:ineq} fails on the set of positive measure. Contradiction.
\end{proof}

\begin{lemma}\label{nonemptys}
Let $S$ be the set defined in \eqref{the_set_s} and let $f$ be the function defined in \eqref{minfunction}. Then $f \in \icr S$.
\end{lemma}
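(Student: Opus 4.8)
The plan is to show directly that $f$ belongs to the relative algebraic interior of $S = L^\infty[0,1] \cap \Dom\varphi$. By the definition of $\icr$, I must verify two things: first that $f \in S$, and second that for every $y \in \Sp(S - S)$ there exists $t > 0$ with $f + ty \in S$. The first point is immediate: $f(t) = t$ is essentially bounded and $f \geq -1$ a.e., while $\varphi(f) = \int_0^1 \frac{(t-t)^2}{t}\,dt = 0 < +\infty$, so $f \in S$.

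For the second point, I first need to understand $\Sp(S - S)$. I claim $\Sp(S-S) \subseteq L^\infty[0,1]$ with the additional feature that every $y \in \Sp(S-S)$ satisfies $\int_0^1 \frac{y(t)^2}{t}\,dt < +\infty$; indeed, if $y = \alpha(u - v)$ with $u, v \in S$, then $y \in L^\infty$, and since $(u(t)-t)$ and $(v(t)-t)$ are both square-integrable against $dt/t$, so is their difference $u(t) - v(t) = y(t)/\alpha$. So for a general element of $\Sp(S-S)$, written as a finite linear combination of such differences, both properties persist (using that $L^\infty$ and the weighted $L^2$-space are vector spaces). Now fix such a $y$ and set $N := \|y\|_{L^\infty}$ and $K := \int_0^1 \frac{y(t)^2}{t}\,dt < \infty$. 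Choosing $t := \frac{1}{N+1} > 0$, the function $f + ty$ satisfies $f(s) + ty(s) = s + ty(s) \geq -tN > -1$ a.e., so it lies above $-1$; it is essentially bounded since both $f$ and $y$ are; and
$$
\varphi(f + ty) = \int_0^1 \frac{(s + ty(s) - s)^2}{s}\,ds = t^2 \int_0^1 \frac{y(s)^2}{s}\,ds = t^2 K < +\infty.
$$
Hence $f + ty \in S$, which is exactly the defining condition for $f \in \icr S$.

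The main obstacle is the precise description of $\Sp(S-S)$ — specifically, establishing that membership in this span forces the weighted square-integrability $\int_0^1 y(t)^2/t\,dt < \infty$, since this is the property that makes $\varphi(f+ty)$ finite and is not visible from the $L^\infty$ structure alone. This requires observing that $S - x_0$ for any fixed $x_0 \in S$ already lives inside the weighted $L^2$-space $\{z : \int_0^1 z(t)^2/t\,dt < \infty\}$ (because $\varphi(x_0) < \infty$ and $\varphi(x) < \infty$ for $x \in S$ give, via the parallelogram-type bound $(x - x_0)^2 \leq 2(x-t)^2 + 2(x_0 - t)^2$, that $x - x_0$ is in that space), and then that this space is linear so the span stays inside it. Once that is in hand, the choice $t = 1/(\|y\|_\infty + 1)$ handles both the constraint $\geq -1$ and finiteness of $\varphi$ simultaneously, and the proof closes.
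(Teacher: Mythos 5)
Your proof is correct and follows essentially the same route as the paper: represent an arbitrary element of $\Sp(S-S)$ via elements of $S$, use the $L^\infty$ bound to push $f+ty$ above $-1$, and conclude. You are in fact somewhat more complete than the paper's own proof, which only verifies the constraint $\geq -1$ and leaves implicit the check that $\varphi(f+ty)<+\infty$ (i.e.\ the weighted square-integrability of $y$) and that $f+ty\in L^\infty[0,1]$, both of which you verify explicitly.
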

\begin{proof}
Since $S$ is convex and contains $0$, every element from $\Sp S$ can be represented as 
$$\alpha v - \beta u,\ \text{ where }\alpha >0,\ \beta > 0 \text{ and }  \{v,u\} \subset S.$$
Let $\|\cdot \|_\infty$ denotes the norm in $L^\infty[0,1]$ and let $\varepsilon$ be an arbitrary number from the interval \text{$(0, (\alpha + \beta \|u\|_\infty)^{-1})$.} Then
$$f(t) + \varepsilon (\alpha v(t) -\beta u(t)) \geq \varepsilon(-\alpha - \beta \|u\|_\infty) > -1 \ \text{ for almost all } t \in [0,1].$$
Thus $f \in \icr S$.
\end{proof}
In the next proposition we are to illustrate the results of Section \ref{section_convex_case}. We construct a Minkowski functional that yields a locally Lipschitz property for the function $\varphi$. According to the procedures considered in Section \ref{section_convex_case}, a Minkowski functional can be constructed straightforward from the convex function.

\begin{proposition}\label{lipschlemma}
Let $S$ be the set defined in \eqref{the_set_s} and let $\varphi$ be the function defined in \eqref{example1}. Then $\varphi$ is locally $\mu$-Lipschitz and regular on $\icr S$, where $\mu$ is a norm in $\Sp S$ defined by
\begin{equation}\label{normaexample}
\mu(x):=\sqrt{\int_0^1 \frac{(x(t))^2}{t}dt,} \ \ x \in \Sp S. 
\end{equation}
\end{proposition}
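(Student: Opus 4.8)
```latex
\textbf{Proof proposal.} The plan is to verify the three hypotheses needed to invoke Corollary \ref{loc_lip_cor} (equivalently Theorem \ref{loc_lip}): namely that the formula \eqref{normaexample} actually defines a Minkowski functional of a translate to the origin of a suitable symmetric convex set $C_A\subset S$, that $\varphi|_{C_A}$ is bounded above, and that the span condition $\Sp(C_A-x_0)=\Sp(S-S)$ is compatible with $\mu$ being a genuine norm on $\Sp S$. The natural choice is $x_0:=f$ (the minimizer from \eqref{minfunction}), which lies in $\icr S$ by Lemma \ref{nonemptys}, and $A$ any real number $\geq\varphi(f)=0$; then $S_A=\{x\in S:\varphi(x)\leq A\}$ and $C_A$ is the symmetrization of $S_A$ about $f$ as in \eqref{eq:c_a}. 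First I would compute, for $x\in S$, the difference $\varphi(x)-\varphi(f)=\int_0^1\frac{(x(t)-t)^2}{t}\,dt=\mu(x-f)^2$, using $\varphi(f)=0$; this identifies the sublevel set geometry with a ball of the seminorm $\mu(\cdot-f)$.

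The key computational step is to show that $C_A-f$ equals (or at least contains a scalar multiple of, and is contained in a scalar multiple of) the $\mu$-ball $\{x\in\Sp S:\mu(x)\leq r\}$ for the appropriate radius, so that by the homogeneity of Minkowski functionals ($\mu_{\alpha(C-p)}=\alpha^{-1}\mu$, as used in the proof of Theorem \ref{loc_lip}) the functional of $C_A-f$ is a positive multiple of the $\mu$ in \eqref{normaexample}; rescaling $A$ absorbs the constant. Concretely: if $x\in S_A$ then $\mu(x-f)^2=\varphi(x)\leq A$, so $S_A-f\subset\{\mu(\cdot)\leq\sqrt A\}$; conversely the symmetrization forces $C_A$ to be exactly the set of $x\in S$ with $x_0\pm\alpha(x-x_0)\in S_A$ for some $\alpha>0$, and one checks that membership is governed purely by $\mu(x-f)$ together with the affine constraint $x\geq -1$ a.e. that defines $S$. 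Then $\varphi|_{C_A}$ is bounded above by $A$ by construction, so Corollary \ref{loc_lip_cor} applies and gives that $\varphi$ is locally $\mu$-Lipschitz on $\icr S$.

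For regularity I would invoke Proposition \ref{regular}: since $\varphi$ is a convex function on $S$ with $\icr S\neq\emptyset$ and $f\in\icr S$, that proposition states directly that $\varphi^\circ(x,v)=\varphi^\prime(x,v)$ for all $x\in\icr S$ and $v\in X_A=\Sp(C_A-f)=\Sp S$, which is exactly the regularity claim. Finally I would note that $\mu$ in \eqref{normaexample} is indeed a norm and not merely a seminorm on $\Sp S$: if $\mu(x)=0$ then $\int_0^1 x(t)^2/t\,dt=0$, forcing $x=0$ a.e., so $\Ker\mu=\{0\}$ and the quotient machinery of Section \ref{section_clarke} collapses harmlessly, with $Y$ being $\Sp S$ itself equipped with the norm $\mu$.

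The main obstacle is the second step: pinning down $C_A-f$ precisely enough to identify its Minkowski functional with a scalar multiple of \eqref{normaexample}. The subtlety is that $S$ is not symmetric about $f$ (the pointwise constraint $x\geq -1$ is one-sided), so $C_A$ is a genuine symmetrization and its shape is the intersection of the $\mu$-ball with a symmetrized version of the constraint cone; one must check that this intersection still has Minkowski functional proportional to $\mu$ on the whole of $\Sp S$, i.e. that the constraint does not ``cut'' the ball in a direction-dependent way after symmetrization. This reduces to the absorbing property of Lemma \ref{absorb} applied to $C_A-f$ together with the observation that for any $x\in\Sp S$ the ray $t\mapsto f+tx$ stays in $S$ for small $|t|$ in both directions (by the computation in the proof of Lemma \ref{nonemptys}), which is what makes the symmetrized set comparable to a $\mu$-ball. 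Once that comparison is established, the homogeneity rescaling finishes the argument.
```
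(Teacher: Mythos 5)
Your strategy is the paper's own: take $x_0=f$, form the sublevel set $S_A$ and its symmetrization $C_A$, identify the Minkowski functional of $C_A-f$ with \eqref{normaexample}, then quote Corollary \ref{loc_lip_cor} for the Lipschitz part and Proposition \ref{regular} for regularity. (The paper does this with $A=1$: it asserts that $S_1-f$ is symmetric, so that $C_1=S_1$, and computes the gauge of $S_1-f$ to be $\sqrt{\smallint_0^1 x(t)^2t^{-1}dt}$ by looking only at the quadratic constraint.) Your identity $\varphi(x)=\mu(x-f)^2$ is exactly the identification of sublevel sets with $\mu$-balls that drives that computation, and your treatment of regularity and of $\Ker\mu=\{0\}$ matches the paper.

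However, the step you flag as ``the main obstacle'' is a genuine gap, and the resolution you sketch does not go through: the gauge of $C_A-f$ is \emph{not} comparable to the $\mu$ of \eqref{normaexample}, precisely because of the one-sided constraint $x\geq-1$. Take $v=-2\chi_{[1/2,1/2+\epsilon]}$. Then $\mu(v)^2=4\ln(1+2\epsilon)\leq 8\epsilon$ is as small as you like, yet $f+v/\alpha\geq-1$ fails on a set of positive measure for every $\alpha<4/3$, so $v/\alpha\notin S-f$ for such $\alpha$ and the gauge of \emph{any} convex subset of $S-f$ is at least $4/3$ at $v$. Consequently no symmetric convex $C\subset S$ has gauge bounded above by a multiple of $\mu$ (otherwise a small $\mu$-ball would sit inside $S-f$, which the same $v$ contradicts); the absorbing/ray argument from Lemmas \ref{absorb} and \ref{nonemptys} only yields finiteness of the gauge, not the two-sided comparison you need, so Corollary \ref{loc_lip_cor} cannot deliver this particular $\mu$. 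The repair is already in your first paragraph and bypasses the gauge identification entirely: from $\varphi(x)=\mu(x-f)^2$ and the seminorm triangle inequality,
\begin{equation*}
|\varphi(x)-\varphi(y)|=\bigl|\mu(x-f)^2-\mu(y-f)^2\bigr|\leq\bigl(\mu(x-f)+\mu(y-f)\bigr)\,\mu(x-y),
\end{equation*}
which verifies \eqref{super_inequality} directly on any $\mu$-bounded neighbourhood of a point of $\icr S$, with constant $L=2(\mu(x_0-f)+\lambda)$ on $B_\mu(x_0,\lambda)\cap S$. You should state the conclusion in that explicit form, since that is the meaning the paper itself assigns to ``locally $\mu$-Lipschitz'' in this example.
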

In case of Proposition \ref{lipschlemma} the locally $\mu$-Lipschitz property means that for every point $x_0 \in \icr S$ there exists $L > 0$ and $\lambda > 0$ such that for all pairs $\{x,y\}\subset B_{\mu}(x_0,\lambda)$ the following inequality holds:
\begin{equation}\label{super_inequality}
\left|\int^1_0 \frac{(x(t)-t)^2}{t}dt - \int^1_0\frac{(y(t)-t)^2}{t}dt \right|\leq L\sqrt{\int_0^1\frac{(x(t)-y(t))^2}{t}dt}\end{equation}
\begin{proof}
Let $f:[0,1]\rightarrow[0,1]$ be the function defined by $f(t) := t$.
Consider the following set:
$$S_1 := \{ x \in S : \varphi(x) \leq 1\} = \{x \in S : \int_0^1 \frac{(x(t)-f(t))^2}{t}dt\leq 1\}.$$
The set $S_1 - f$ is symmetric by the definition of $S_1$. Let $\mu$ be the Minkowski functional of the set $S_1 -f $. Then
$$\mu(x) = \inf \{\alpha > 0 : x \in \alpha (S_1 - f)\} = \inf\{ \alpha > 0 : \int_0^1\frac{(x(t))^2}{t} \leq \alpha^2\}=\sqrt{\int_0^1\frac{(x(t))^2}{t}dt}.$$
By Theorem \ref{lipschitz}, the function $\varphi$ is locally $\mu$-Lipschitz on the set $\icr S$. Since the function $\varphi$ is convex, it is regular by Proposition \ref{regular}. Easy calculations show that $B_\mu(0,1)$ is absorbing in $\Sp S$ and hence $\mu$ is a norm in $\Sp S$.
\end{proof}
Let us consider the Clarke subdifferential $\partial_C^\mu \varphi(\cdot)$ of the function $\varphi$ with respect to the Minkowski functional $\mu$:
\begin{equation}\partial_C^\mu \varphi(x) = \{ \zeta \in (\Sp S)^\prime : \varphi^\prime(x,v) \geq \left\langle \zeta,v \right\rangle\text{ for all } v \in \Sp S\}, \ \ x \in \icr S.\label{eq:clarke_ex}\end{equation}
\begin{proposition}\label{ex1_cl}
Let $\varphi$ be the function defined in \eqref{example1} and let $\partial_C^\mu\varphi(\cdot)$ be its Clarke subdifferential defined in \eqref{eq:clarke_ex}. Then
\begin{equation}
\partial^\mu_C\varphi(x) = \left\{\Sp S \ni v \mapsto 2 \int_0^1\frac{(x(t)-t)}{t}v(t) dt \in \mathbb{R}\right\}.\label{eq:ex1_clarke}
\end{equation}
Particularly, $0 \in \partial_C^\mu\varphi(f)$, where $f$ is the function defined in \eqref{minfunction}.
\end{proposition}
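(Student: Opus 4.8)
The plan is to compute the directional derivative $\varphi^\prime(x,v)$ explicitly for $x \in \icr S$ and $v \in \Sp S$, and then read the subdifferential off it. First I would observe that $0 \in S$ (since $\varphi(0) = \int_0^1 t\,dt = 1/2 < +\infty$ and $0 \geq -1$), so $\Sp(S-S) = \Sp S$; hence for $v \in \Sp S$ and $x \in \icr S$ there is $\tau_0 > 0$ with $x + \tau_0 v \in S$, and for $\tau \in (0,\tau_0)$ the identity $x + \tau v = (1 - \tau/\tau_0)x + (\tau/\tau_0)(x + \tau_0 v)$ exhibits $x + \tau v$ as a convex combination of points of $S$, so $\varphi(x + \tau v)$ is given by the integral in \eqref{example1}. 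Expanding $(x(t) + \tau v(t) - t)^2 = (x(t)-t)^2 + 2\tau v(t)(x(t)-t) + \tau^2 v(t)^2$ and dividing by $\tau$ gives
\[ \frac{\varphi(x + \tau v) - \varphi(x)}{\tau} = 2\int_0^1 \frac{(x(t)-t)v(t)}{t}\,dt + \tau\int_0^1 \frac{v(t)^2}{t}\,dt. \]
Here $\int_0^1 v(t)^2 t^{-1}\,dt = \mu(v)^2 < +\infty$ because $\mu$ is a norm on $\Sp S$ by Proposition \ref{lipschlemma}, and $\bigl|\int_0^1 (x(t)-t)v(t)t^{-1}\,dt\bigr| \leq \mu(v)\sqrt{\varphi(x)} < +\infty$ by the Cauchy--Schwarz inequality, so letting $\tau \to 0+$ yields $\varphi^\prime(x,v) = 2\int_0^1 (x(t)-t)v(t)t^{-1}\,dt$.

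This identifies $\varphi^\prime(x,\cdot)$ with the linear functional $\zeta_x \in (\Sp S)^\prime$ given by $\langle \zeta_x, v\rangle := 2\int_0^1 (x(t)-t)v(t)t^{-1}\,dt$. Since $\varphi$ is convex it is regular on $\icr S$ by Proposition \ref{regular}, so by \eqref{eq:clarke_ex} the Clarke subdifferential is $\partial^\mu_C\varphi(x) = \{\zeta \in (\Sp S)^\prime : \langle \zeta, v\rangle \leq \varphi^\prime(x,v) = \langle \zeta_x, v\rangle \text{ for all } v \in \Sp S\}$. Applying the defining inequality to both $v$ and $-v$ forces $\langle \zeta, v\rangle = \langle \zeta_x, v\rangle$ for every $v \in \Sp S$, i.e. $\zeta = \zeta_x$; conversely $\zeta_x$ obviously satisfies the inequality. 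Hence $\partial^\mu_C\varphi(x) = \{\zeta_x\}$, which is precisely the set displayed in \eqref{eq:ex1_clarke}.

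The last assertion then follows by taking $x = f$, where $f(t) = t$: by Lemma \ref{nonemptys}, $f \in \icr S$, and since $x(t) - t \equiv 0$ the functional $\zeta_f$ is the zero functional, so $0 \in \partial^\mu_C\varphi(f)$ (in fact $\partial^\mu_C\varphi(f) = \{0\}$). I expect the only genuinely delicate point to be the bookkeeping at the start — checking that $x + \tau v$ stays in $S$ for small $\tau > 0$ and that the two integrals in the expansion are finite — and both are handled by convexity of $S$ together with the fact, supplied by Proposition \ref{lipschlemma}, that $\mu$ is an honest norm on $\Sp S$, so that $\mu(v) < +\infty$ and Cauchy--Schwarz is applicable.
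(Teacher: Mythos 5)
Your proof is correct and follows essentially the same route as the paper: expand $(x(t)+\tau v(t)-t)^2$, pass to the limit to get $\varphi^\prime(x,v)=2\int_0^1 (x(t)-t)v(t)t^{-1}\,dt$, and use linearity of $\varphi^\prime(x,\cdot)$ in $v$ (via regularity from Proposition \ref{regular}) to force the subdifferential to be the singleton $\{\zeta_x\}$. The only difference is that you spell out the bookkeeping the paper leaves implicit — that $x+\tau v$ stays in $S$ for small $\tau>0$ and that the two integrals are finite via Cauchy--Schwarz — which is a welcome addition but not a change of method.
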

\begin{proof}
Let us calculate the directional derivative $\varphi^\prime(\cdot,\cdot)$. Let $x \in \icr S$ and $v \in \Sp S$. The existence of $\varphi^\prime(x,v)$ is guaranteed, e.g., by Proposition \ref{regular}. Then
$$
\varphi^\prime(x,v) = \lim_{\alpha \rightarrow 0+} \frac{1}{\alpha} \int_0^1 \frac{(x(t)+\alpha v(t) - t)^2 - (x(t) -t)^2}{t}dt =$$$$= \lim_{\alpha \rightarrow 0+} \frac{1}{\alpha} \int_0^1 \frac{\alpha^2 (v(t))^2 +2\alpha v(t)(x(t) -t)}{t}dt = 2\int_0^1\frac{(x(t)-t)}{t}v(t)dt.$$
Since $\varphi(\cdot, \cdot)$ is linear in the second argument, $\partial^\mu_C\varphi(\cdot)$ consists of only one element at every point from $\icr S$. Therefore, the equality holds in \eqref{eq:ex1_clarke}. In particular $0 \in \partial_C^\mu\varphi(f)$.\qedhere
\end{proof}
\subsection{A few illustrations of the calculus}\label{examples_calculus}
The following proposition is an illustration of Theorem \ref{lebourg}.
\begin{proposition}\label{ex_lebourg}
Let $\varphi$ be the function defined in \eqref{example1}, $S$ be the set defined in \eqref{the_set_s} and $\mu$ be the Minkowski functional defined in \eqref{normaexample}. Let $x$ and $y$ be two distinct points from $\icr S$ such that there exists a ball $B_\mu(x_0,\varepsilon)$ that consists both the points $x$ and $y$ and such that $\varphi$ is $\mu$-Lipschitz on $B_\mu(x_0,\varepsilon)$. Then there exists $\alpha \in (0,1)$ such that
\begin{equation}
\varphi(x) - \varphi(y) = 2\int_0^1\frac{(\alpha x(t)+(1-\alpha)y(t)-t)}{t}(x(t) - y(t))dt.\label{eq:lebourg}
\end{equation}
\end{proposition}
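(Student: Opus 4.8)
The plan is to deduce~\eqref{eq:lebourg} directly from the Lebourg mean value theorem (Theorem~\ref{lebourg}) and then to evaluate the resulting one-element subdifferential by means of Proposition~\ref{ex1_cl}. Write $f(t):=t$, so that $\varphi$ attains its global minimum at $f$ and $\mu$ is the norm in~\eqref{normaexample}.

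First I would take $C:=B_\mu(x_0,\varepsilon)$. This is a convex subset of $\Sp S$ which is symmetric with respect to $x_0$, and it lies in $S$: indeed $C\subset\Sp S\subset L^\infty[0,1]$, while the $\mu$-Lipschitz hypothesis forces $\varphi$ to be finite on $C$, whence $C\subset L^\infty[0,1]\cap\Dom\varphi=S$. A direct computation shows that the Minkowski functional of $C-x_0=\{v\in\Sp S:\mu(v)<\varepsilon\}$ is $\mu':=\varepsilon^{-1}\mu$; therefore $B_{\mu'}(x_0,1)=B_\mu(x_0,\varepsilon)=C$, so the distinct points $x$ and $y$ both belong to $B_{\mu'}(x_0,1)$, and if $L$ is a $\mu$-Lipschitz constant of $\varphi$ on $C$ then $\varepsilon L$ is a $\mu'$-Lipschitz constant of $\varphi$ on $C$. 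Thus the hypotheses of Theorem~\ref{lebourg} are met with $C$ and $\mu'$ in place of $C$ and $\mu$.

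Now I would apply Theorem~\ref{lebourg}: there exist $\alpha\in(0,1)$ and $\zeta\in\partial^{\mu'}_C\varphi(z)$, where $z:=\alpha x+(1-\alpha)y$ is a point of the open line segment between $x$ and $y$, such that
$$\varphi(x)-\varphi(y)=\left\langle\zeta,x-y\right\rangle.$$
It remains to identify $\zeta$. Since $\icr S$ is convex and contains both $x$ and $y$, the point $z$ lies in $\icr S$. Furthermore $\partial^{\mu'}_C\varphi(z)$ is characterised (see the discussion around~\eqref{eq:clarke_df}) through $\varphi^\circ(z,\cdot)$, whose defining filter ``$\mu'(u-z)\to0$'' coincides with ``$\mu(u-z)\to0$''; hence $\partial^{\mu'}_C\varphi(z)=\partial^\mu_C\varphi(z)$, the Clarke subdifferential associated with the norm~\eqref{normaexample}. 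By Proposition~\ref{ex1_cl} this set consists of the single functional $v\mapsto 2\int_0^1\frac{z(t)-t}{t}v(t)\,dt$. Substituting this $\zeta$ together with $z(t)=\alpha x(t)+(1-\alpha)y(t)$ into the displayed equality yields exactly~\eqref{eq:lebourg}.

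I do not anticipate a real obstacle, since the statement merely illustrates the calculus already built; the two points that require care are the rescaling $\mu\mapsto\varepsilon^{-1}\mu$, which is what brings us to the normalisation $B_{\mu'}(x_0,1)$ demanded by Theorem~\ref{lebourg}, and the routine observation that the generalized directional derivative, and hence the Clarke subdifferential, is unchanged when $\mu$ is replaced by a positive scalar multiple of itself, so that Proposition~\ref{ex1_cl} transfers verbatim to the point $z$.
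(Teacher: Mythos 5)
Your proposal is correct and follows essentially the same route as the paper: the paper's proof likewise consists of applying Theorem~\ref{lebourg} to the segment $[x,y]\subset B_\mu(x_0,\varepsilon)$ and then identifying the unique element of the Clarke subdifferential at the intermediate point via Proposition~\ref{ex1_cl} (formula~\eqref{eq:ex1_clarke}). Your explicit handling of the rescaling $\mu\mapsto\varepsilon^{-1}\mu$ and of the invariance of $\varphi^\circ$ under positive scalar multiples of $\mu$ only fills in details the paper leaves implicit.
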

\begin{proof}
Since the function $\varphi$ is $\mu$-Lipschitz on the set $B_\mu(x_0,\varepsilon)$ and the line segment with endpoints $x$ and $y$ is a subset of $B_\mu(x_0,\varepsilon)$, equality \eqref{eq:lebourg} follows from \eqref{eq:ex1_clarke} and Theorem \ref{lebourg}.
\end{proof}
The following example is a modification of Example \ref{example_conv} and serves as an illustration for \text{Theorem \ref{prop_chain_rule2}} (Chain rule 2).
\begin{example}\label{ex_chain_rule2}
Let $\varphi$ be the function defined in Example \ref{example_conv}, $S$ be the set defined in \eqref{the_set_s}, $\mu$ be the Minkowski functional defined in \eqref{normaexample}. Consider the function $g:S\rightarrow\mathbb{R}$ defined by
\begin{equation}
g(x) := e^{\varphi(x)}, \ \ x \in S.\label{eq:gg}
\end{equation}
The function $g$ is locally $\mu$-Lipschitz on the set $\icr S$ and moreover its Clarke subdifferential can be calculated as
\begin{equation}
\partial^\mu_C g(x) =  e^{\varphi(x)} \partial^\mu_C\varphi(x),\label{eq:ex2}
\end{equation}
i.e., by Proposition \ref{ex1_cl},
\begin{equation}
\partial^\mu_C g(x) = \left\{\Sp S \ni v \mapsto 2 e^{\varphi(x)}\int_0^1\frac{(x(t)-t)}{t}v(t) dt \in \mathbb{R}\right\}.\label{eq:clarkeg}
\end{equation}
Indeed, by Theorem \ref{prop_chain_rule2},
$$\partial^\mu_C g(x) \subset \{ \alpha \zeta : \zeta \in \partial^\mu_C\varphi(x), \ \alpha \in \partial_C e^{(\cdot)} (\varphi(x))\} = \{e^{\varphi(x)} \zeta : \zeta \in \partial^\mu_C\varphi(x)\} = e^{\varphi(x)} \partial^\mu_C\varphi(x).$$
In this example there is no need to specify a vector topology in $\Sp S$. By Corollary \ref{cor_chain_rule1}, the equality holds in \eqref{eq:ex2}.
\end{example}

The next examples are a combination of Example \ref{example_conv} and Example \ref{ex_chain_rule2} and illustrate a use of Theorem \ref{sumrule} and Theorem \ref{multiplerule}. Firstly we need to show that the function $g$ defined in \eqref{eq:gg} is convex and regular.
\begin{lemma}\label{lemma_greg}
Let $g$ be the function defined in \eqref{eq:gg}. Then the function $g$ is convex and regular.
\end{lemma}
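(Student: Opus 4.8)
The plan is to prove both claims directly from the analogous properties of $\varphi$, which is already known to be convex and regular on $\icr S$ (the latter by Proposition \ref{regular}, the former by hypothesis). Convexity of $g=e^\varphi$ is immediate: the exponential function $\exp:\mathbb{R}\to\mathbb{R}$ is convex and non-decreasing, and a non-decreasing convex function composed with a convex function is convex, so for $\{x,y\}\subset S$ and $\lambda\in[0,1]$ one has $g(\lambda x+(1-\lambda)y)=\exp(\varphi(\lambda x+(1-\lambda)y))\le \exp(\lambda\varphi(x)+(1-\lambda)\varphi(y))\le \lambda\exp(\varphi(x))+(1-\lambda)\exp(\varphi(y))=\lambda g(x)+(1-\lambda)g(y)$.

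For regularity, first note that the directional derivative $g^\prime(x,v)$ exists for every $x\in\icr S$ and every $v\in\Sp S$: by the chain rule for one-variable differentiable outer functions composed with directionally differentiable inner functions, $g^\prime(x,v)=e^{\varphi(x)}\varphi^\prime(x,v)$, and $\varphi^\prime(x,v)$ exists by Proposition \ref{regular}. Next I would compute the generalized directional derivative $g^\circ(x,v)$. Since $g$ is $\mu$-Lipschitz near $x$ (as observed in Example \ref{ex_chain_rule2}), $g^\circ(x,v)$ is defined via \eqref{eq:gdd}; writing the difference quotient for $g$ at a nearby point $y$ with small $t>0$ as
$$\frac{g(y+tv)-g(y)}{t}=\frac{e^{\varphi(y+tv)}-e^{\varphi(y)}}{t},$$
apply the one-variable mean value theorem to $\exp$ on the interval between $\varphi(y)$ and $\varphi(y+tv)$, obtaining an intermediate value $\theta_{y,t}$ with $e^{\theta_{y,t}}$ multiplying the quotient $(\varphi(y+tv)-\varphi(y))/t$. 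As $\mu(y-x)\to 0$ and $t\to 0+$, continuity of $\varphi$ (which follows from the local $\mu$-Lipschitz property) forces $\varphi(y),\varphi(y+tv)\to\varphi(x)$, hence $e^{\theta_{y,t}}\to e^{\varphi(x)}$, and taking $\limsup$ gives $g^\circ(x,v)\le e^{\varphi(x)}\varphi^\circ(x,v)$. Combining with the already computed $g^\prime(x,v)=e^{\varphi(x)}\varphi^\prime(x,v)=e^{\varphi(x)}\varphi^\circ(x,v)\le g^\circ(x,v)$ (the last inequality being the general relation $g^\prime\le g^\circ$), all three quantities coincide, so $g$ is regular at $x$.

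An even shorter route, which I would actually prefer to present, is to invoke the machinery already in place: write $g=\exp\circ\varphi$ with the scalar-valued outer function $\exp$, which is $C^1$ hence regular, and apply Corollary \ref{cor_chain_rule1} (the equality case of Chain rule II), whose hypotheses are met because $\exp$ is regular, $\varphi$ is regular (Proposition \ref{regular}), and every element of $\partial_C\exp(\varphi(x))=\{e^{\varphi(x)}\}$ is non-negative; the equality $\partial^\mu_C g(x)=e^{\varphi(x)}\partial^\mu_C\varphi(x)$ together with $g^\circ(x,v)=\max\{\langle\zeta,v\rangle:\zeta\in\partial^\mu_C g(x)\}$ then yields $g^\circ(x,v)=e^{\varphi(x)}\varphi^\circ(x,v)=e^{\varphi(x)}\varphi^\prime(x,v)=g^\prime(x,v)$, which is regularity. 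The main (and only real) obstacle is bookkeeping: one must be careful that the local $\mu$-Lipschitz property of $g$ on $\icr S$ is genuinely available—this is exactly what was asserted in Example \ref{ex_chain_rule2} via Theorem \ref{prop_chain_rule2}—so that $g^\circ$ and $\partial^\mu_C g$ are well-defined objects in the sense of Section \ref{section_clarke}; once that is granted, everything reduces to the scalar identity $(e^u)^\prime=e^u$ and the already-established regularity of $\varphi$.
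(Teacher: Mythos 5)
Your proof is correct, and the convexity half coincides with the paper's (monotone convex $\exp$ composed with convex $\varphi$). For regularity, however, you take a genuinely different and longer road: you derive regularity of $g$ from regularity of $\varphi$ via a chain-rule computation (either the mean-value-theorem argument on $\exp$, or Corollary \ref{cor_chain_rule1}), whereas the paper's proof is a one-liner: having just shown that $g$ is convex, it applies Proposition \ref{regular} \emph{to $g$ itself}, since that proposition asserts regularity for any convex function on a set with non-empty relative algebraic interior. Your route buys something the paper's does not — it would establish regularity of $e^{\varphi}$ for any regular locally $\mu$-Lipschitz $\varphi$, convex or not — but at the price of extra bookkeeping: in the Corollary \ref{cor_chain_rule1} variant you should note that its hypothesis (iii) (a complete Hausdorff locally convex topology on $\Sp S$ making $\mu$ continuous) can be met, e.g.\ by taking the strongest locally convex topology as the paper does elsewhere; and in the mean-value variant you implicitly use that $\varphi(y), \varphi(y+tv)\rightarrow\varphi(x)$ and that the difference quotients of $\varphi$ stay bounded, both of which do follow from the local $\mu$-Lipschitz property but deserve a word. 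Since convexity of $g$ is established first in either presentation, the paper's shortcut through Proposition \ref{regular} renders all of this unnecessary.
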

\begin{proof}
Let $\lambda \in [0,1]$ and $\{x,y\} \subset S$. Then
$$ g(\lambda x + (1-\lambda) y ) = e^{\varphi(\lambda x + (1-\lambda) y)} \leq e^{\lambda\varphi(x) + (1-\lambda)\varphi(y)} \leq \lambda e^{\varphi(x)} + (1-\lambda) e^{\varphi(y)} = \lambda g(x) + (1-\lambda) g(y).$$
Thus the function $g$ is convex. Since $g$ is a $\mu$-Lipschitz function, it is regular by Proposition \ref{regular}.
\end{proof}
\begin{example}\label{ex_sum_rule}
Let $\varphi$ be the function defined in Example \ref{example_conv}, $g$ be the function considered in \text{Example \ref{ex_chain_rule2}}, $S$ be the set defined in \eqref{the_set_s} and $\mu$ be the Minkowski functional defined in \eqref{normaexample}. Then
$$\partial^\mu_C (g+\varphi) (x) = \left\{\Sp S \ni v \mapsto 2 \left(1+e^{\varphi(x)}\right)\int_0^1\frac{(x(t)-t)}{t}v(t) dt \in \mathbb{R}\right\}, \ \ x \in \icr S.$$
Indeed, let $x \in \icr S$. By Lemma \ref{lemma_greg} and by Proposition \ref{lipschlemma}, the functions $g$ and $\varphi$ are regular on the set $\icr S$. Using Corollary \ref{corsumrule}, we see that
\begin{equation}
 \partial^\mu_C (g+\varphi) (x) = \partial^\mu_C g(x) + \partial^\mu_C \varphi(x).\label{eq:sumgfi}
\end{equation}
The Clarke subdifferential $\partial^\mu_C g(x)$ is calculated in \eqref{eq:clarkeg} and the Clarke subdifferential of the function $\varphi$ is calculated in \eqref{eq:ex1_clarke}. Since both the sets in the right side of \eqref{eq:sumgfi} are convex, 
$$\partial^\mu_C (g+\varphi) (x) = \left(1+ e^{\varphi(x)}\right) \partial^\mu_C\varphi(x),$$
i.e. 
$$\partial^\mu_C (g+\varphi) (x) = \left\{\Sp S \ni v \mapsto 2 \left(1+e^{\varphi(x)}\right)\int_0^1\frac{(x(t)-t)}{t}v(t) dt \in \mathbb{R}\right\}.$$
\end{example}
\begin{example}\label{ex_mul_rule}
Let $\varphi$ be the function defined in Example \ref{example_conv}, $g$ be the function considered in Example \ref{ex_chain_rule2}, $S$ be the set defined in \eqref{the_set_s} and $\mu$ be the Minkowski functional defined in \eqref{normaexample}. Then
$$\partial^\mu_C (g\varphi) (x) = \left\{\Sp S \ni v \mapsto 2 (1+\varphi(x))e^{\varphi(x)}\int_0^1\frac{(x(t)-t)}{t}v(t) dt \in \mathbb{R}\right\}, \ \ x \in \icr S.$$
Indeed, it follows from Lemma \ref{lemma_greg} and Proposition \ref{lipschlemma} that both the functions $g$ and $\varphi$ are regular on the set $\icr S$; moreover, both $g$ and $\varphi$ are non-negative at every point $x \in \icr S$. \text{By Corollary \ref{cor_multiplerule}}, 
$$\partial^\mu_C(g \varphi)(x) = \varphi(x)\partial^\mu_Cg(x) + g(x) \partial^\mu_C \varphi(x).$$
Using \eqref{eq:ex2} and \eqref{ex1_cl},
\begin{equation}
\begin{split}
\partial^\mu_C(g\varphi)(x) &= e^{\varphi(x)}(1+\varphi (x))\partial^\mu_C\varphi(x) =\\
&= \left\{\Sp S\ni v \mapsto 2(1+\varphi(x))e^{\varphi(x)}\int_0^1\frac{x(t) - t}{t} v(t) dt\right\}. 
\end{split}
\end{equation}
\end{example}
The following example illustrates a use of Theorem \ref{prop_chain_rule1}.
\begin{example}\label{ex_chain_rule1}
Let $S$ be the set defined in \eqref{the_set_s} and let the space $\Sp S$ be supplied with a normed structure induced from the space $L_2[0,1]$. Let $\varphi$ be the function defined in \eqref{example1} and $f$ be the function defined in \eqref{minfunction}. Consider the function $g:\Sp S \rightarrow \Sp S$ defined by $g(x) = fx$, $x \in \Sp S$, i.e. $g(x)(t) = f(t)x(t)$. It is clear that $g(S) \subset S$.
The function $g$ satisfies the next inequalities:
\begin{equation}\label{eq:lg}
\begin{split}
\mu(g(x) - g(y))&\leq \|x-y\|,\\
\|g(x)-g(y)\|&\leq \|x-y\|,
\end{split}
\end{equation}
where $\mu$ is defined in \eqref{normaexample} and $\|\cdot\|$ is the norm in $L_2[0,1]$. Indeed,
$$\mu(g(x) - g(y)) = \sqrt{\int_0^1 \frac{(tx(t)-ty(t))^2}{t}dt} = \sqrt{\int_0^1t (x(t)-y(t))^2 dt} \leq \|x-y\|;$$
$$\|g(x) - g(y) \| = \sqrt{\int_0^1 t^2(x(t)-y(t))^2 dt} \leq \|x-y\|.$$
The G\^ateux derivative of the function $g$ is given by $Dg(x)v = fv = g(v)$, hence $Dg(x)$ doesn't depend on $x$ and therefore $x \mapsto Dg(x)$ is continuous. Furthermore, $v \mapsto g(v)$ is continuous \text{due to \eqref{eq:lg}.} Thus Theorem \ref{prop_chain_rule1} can be applied to the composition $\varphi \circ g$:
\begin{equation}
\partial^\mu_C(\varphi\circ g)(x) \subset \{\zeta \circ g(\cdot) : \zeta \in \partial^\mu_C\varphi(x)\} = \left\{\Sp S \ni v \mapsto 2 \int_0^1(x(t)-t)v(t) dt \in \mathbb{R}\right\}, \ \ x \in \icr S,
\end{equation}
where $\partial^\mu_C\varphi(x)$ is calculated in Proposition \ref{ex1_cl}. The function $\varphi \circ g$ is locally $\|\cdot\|$-Lipschitz, which follows from \eqref{eq:lg} and Proposition \ref{lipschlemma}:
$$|\varphi(g(x)) - \varphi(g(y)) | \leq L \mu(g(x) - g(y)) \leq L \|x - y\|.$$

Therefore, $\partial^\mu_C(\varphi\circ g)(x)$ is non-empty for every $x \in \icr S$ and thus
$$
\partial^\mu_C(\varphi\circ g)(x) =  \left\{\Sp S \ni v \mapsto 2 \int_0^1(x(t)-t)v(t) dt \in \mathbb{R}\right\}, \ \ x \in \icr S.
$$
\end{example}
The modified theory does not guarantee that if a subdifferential of a convex function consists zero at some point, then this point is a global minimum point. This circumstance is illustrated in the following counterexample.
\begin{cntex}\label{cntx_fermat}
Let us consider the function $\varphi : \mathbb{R}^2 \rightarrow \mathbb{R}$ defined by $\varphi (x,y) = x^2 + y^2$
and the Minkowski functional $\mu(x,y) := |x|.$
Then $\varphi$ is locally $\mu$-Lipschitz and its Clarke subdifferential with respect to $\mu$ consists zero at every point $(0,y) \in \mathbb{R}^2$. Indeed, the function $x \mapsto \varphi(x,y)$ is convex in $\mathbb{R}$ and since $\mathbb{R}$ has finite dimension, the function $x \mapsto \varphi(x,y)$ is locally Lipschitz in $\mathbb{R}$. By our definition of $\mu$-locally Lipschitz property, the function $\varphi(\cdot,\cdot)$ is locally $\mu$-Lipschitz in $\mathbb{R}^2$. However, 
$$ \varphi^\prime((0,y),(v,0)) = 0 \text{ for every } v \in \mathbb{R},$$
thus $\partial_C^\mu\varphi(0,y) \ni 0$ for every $y \in \mathbb{R}$.
\end{cntex}
\subsection{Counterexamples to extending the $\mu$-Lipschitz property}\label{c_d}
The following counterexample shows that an ``$\varepsilon$-step'' from the ``boundary'' of the set $C$ in \text{Theorem \ref{th_main}} is necessary.
\begin{cntex} \label{cntex_epsilon}
The conclusion of Theorem \ref{th_main} cannot be weakened to the following relation:
\begin{equation}
\text{The function }\varphi\text{ is }\mu\text{-Lipschitz on the set }B_\mu(p,1).\label{str_lip}
\end{equation}
Indeed, put $X:=\mathbb{R}$, $C:=[-1,1]$, $\varphi(x):=-\sqrt{1 - |x|}$. Then $p = 0$ and $\mu (\cdot) = |\cdot|$.
Suppose that relation \eqref{str_lip} holds, and let $L>0$ be a $\mu$-Lipshitz constant of the function $\varphi$ on the set $B_\mu(0,1)$. It follows from the Mean value theorem on the segment $[1-1/n, 1]$ that there exists $\xi \in (1-1/n, 1)$ such that
$$|\varphi(1) - \varphi(1-1/n)|n  = \varphi^\prime (\xi),$$
hence $L \geq \varphi^\prime(\xi)$. However, it follows from the estimation
$$ \varphi^\prime (\xi) = \frac{1}{2\sqrt{1-\xi}} \geq \frac{\sqrt{n}}{2}.$$
that $L \geq \sqrt{n}/2$ for each natural number $n$. Contradiction.
\end{cntex}
Recall that a function $\varphi$ is called \textit{quasiconvex} on a convex set $C$ if for every pair $\{u,v\} \subset C$ and every number $\alpha \in [0,1]$ the next inequality holds:
$$\varphi(\alpha u + (1-\alpha) v) \leq \max\{\varphi(u),\varphi(v)\}. $$
\begin{cntex}\label{cntex_convexity}
The condition of convexity of the function $\varphi$ in Theorem $\ref{th_main}$ cannot be weakened to quasiconvexity.
Indeed, suppose that the conclusion of Theorem \ref{th_main} holds for quasiconvex functions.
Put $X:=\mathbb{R}$, $C := [-1,1]$, $\varphi(x) := [x]$ (an integer part of $x$). Then $p = 0$ and $\mu (\cdot) = |\cdot|$. Let $\varepsilon \in (0,1)$ be fixed and let $L>0$ be a $\mu$-Lipschitz constant from the theorem. Choose a natural number $N$ such that for all $n \geq N$ the inequality $1/n \leq \varepsilon$ holds. Then
$$ 1 = |\varphi(-1/n) - \varphi(1/n)| \leq \frac{2L}{n} \rightarrow 0, \, n \rightarrow \infty. $$
Contradiction.
\end{cntex}
\begin{cntex}\label{cntex_yravn}
The condition of symmetry of the set $C$ in Theorem \ref{th_main} cannot be omitted.
Indeed, suppose that Theorem \ref{th_main} holds also for a convex set $C$ such that there is no point $p \in C$ that yields the equality $C-p = -(C-p)$. There are at least two possible choices how to understand the conclusion in this case:
\begin{enumerate}[(a)]
\item There exists a point $p \in C$ such that for every $\varepsilon \in (0,1)$ the function $\varphi$ is $\mu$-Lipschitz on the set $\varepsilon(C-p) + p$, where $\mu$ is the Minkowski functional of the set $C-p$;\label{c_a}
\item For every $\varepsilon \in (0,1)$  the function $\varphi$ is $\nu$-Lipschitz on the set $\varepsilon(C-C)$, where $\nu$ is the Minkowski functional of the set $C-C$.\label{c_b}
\end{enumerate}
Put $X := \mathbb{R}$, $C:=[-1,+\infty)$, $\varphi(x):=-x$ and let $\varepsilon \in (0,1)$ be given.
In case of choice \eqref{c_a}, for any point $p \in C$ the Minkowski functional $\mu$ of the set $C-p$ may be written in the following form:
$$ \mu(x) = \begin{cases}
0,\  x \geq 0 \\
-x,\  x < 0
\end{cases}$$
and $B_{\mu}(p,\varepsilon) \supset [0,+\infty]$. If $L$ is a $\mu$-Lipschitz constant, then 
$$1 = |\varphi(1) - \varphi(0)|  \leq L \mu_{C}(1) = 0,$$
which yields a contradiction.
In case of choice \eqref{c_b}, the Minkowski functional $\nu$ of the set $C-C$ is equal to $0$ since $C-C = \mathbb{R}$. Thus this choice also fails.
\end{cntex}
Note that the function $\varphi$ that is considered in Counterexample \ref{cntex_yravn} is Lipschitz with respect to the absolute value $|\cdot|$.


\begin{thebibliography}{12}
\bibitem{amara} Amara, C., Ciligot-Travain, M.:\ Lower CS-closed sets and functions. Journal of mathematical analysis and applications 239.2, 371-389 (1999)
\bibitem{bogachev_measure} Bogachev, V.I.: \ Measure theory. Vol. 1. Springer Science \& Business Media (2007)
\bibitem{bogachev} Bogachev, V.I., Smolyanov, O.G.:\  Real and functional analysis: a university course. Regular and Chaotic Dynamics, Moscow (2009)
\bibitem{bronsted} Br\o ndsted, A., Rockafellar, R.T.:\ On the subdifferentiability of convex functions. Proceedings of the American Mathematical Society 16.4, 605-611 (1965)
\bibitem{clarke_grad} Clarke, F.H.:\ Generalized gradients of Lipschitz functionals. Advances in Mathematics 40, 52-67 (1981)
\bibitem{clarke_book} Clarke, F.H.:\ Optimization and nonsmooth analysis. John Wiley \& Sons Inc., New York. A Wiley-Interscience Publication (1983)
for Optimization (J. B. Hiriart-Urruty, Ed.), Vol. 129, North-Holland Math. Studies,
Amsterdam (1986)
\bibitem{edwards} Edwards, R.E.:\ Functional Analysis. Theory and Applications [Russian translation], Mir, Moscow (1969)
\bibitem{laghdir} Laghdir, M. \ Some remarks on subdifferentiability of convex functions. Applied Mathematics E-Notes 5, 150-156 (2005)
\bibitem{mouss} Moussaoui, M., Volle, M.:\ Subdifferentiability and inf-sup theorems. Positivity 3.4, 345-355 (1999)
\bibitem{rudin} Rudin, W.: \ Functional analysis, 2nd ed. New York: McGraw-Hill Inc. (1991) 
\bibitem{simons} Simons, S. \ The occasional distributivity of $\circ$ over $\overset{+}{e}$ and the change of variable formula for conjugate functions. Nonlinear Analysis: Theory, Methods \& Applications 14, no. 12, 1111-1120 (1990)
\bibitem{zalinescu} Z\u alinescu, C.:\ Convex analysis in General Vector Spaces. World Scientific, London, 65-66 (2003) \label{lit_zalinescu}
\end{thebibliography}
\end{document}